\begin{document}

\newtheorem{theorem}{Theorem}
\newtheorem{proposition}[theorem]{Proposition}
\newtheorem{remark}[theorem]{Remark}
\newtheorem{lemma}[theorem]{Lemma}
\newtheorem{corollary}[theorem]{Corollary}
\newtheorem{definition}[theorem]{Definition}
\newcommand{\RR}{\mathbb{R}}
\newcommand{\NN}{\mathbb{N}}
\newcommand{\ZZ}{\mathbb{Z}}
\newcommand{\PP}{\mathbb{P}}
\newcommand{\EE}{\mathbb{E}}
\newcommand{\Var}{\mathbb{V}{\rm ar}}
\newcommand{\Cov}{\mathbb{C}{\rm ov}}
\newcommand{\Id}{\text{Id}}
\newcommand{\dive}{\operatorname{div}}
\newcommand{\eps}{\varepsilon}

\newcommand{\dps}{\displaystyle}
\newcommand{\dis}{\displaystyle}

\newcommand{\D}{{\cal{D}}}

\def\longrightharpoonup{\relbar\joinrel\rightharpoonup}

\title{Variance reduction using antithetic variables for a nonlinear
  convex stochastic homogenization problem}
\author{F. Legoll$^{1,3}$ and W. Minvielle$^{2,3}$\\
{\footnotesize $^1$ Laboratoire Navier, \'Ecole Nationale des Ponts et
Chauss\'ees, Universit\'e Paris-Est,}\\
{\footnotesize 6 et 8 avenue Blaise Pascal, 77455 Marne-La-Vall\'ee
Cedex 2, France}\\
{\footnotesize \tt legoll@lami.enpc.fr}\\
{\footnotesize $^2$ CERMICS, \'Ecole Nationale des Ponts et
Chauss\'ees, Universit\'e Paris-Est,}\\
{\footnotesize 6 et 8 avenue Blaise Pascal, 77455 Marne-La-Vall\'ee
Cedex 2, France}\\
{\footnotesize \tt william.minvielle@cermics.enpc.fr}\\
{\footnotesize $^3$ INRIA Rocquencourt, MICMAC team-project,}\\
{\footnotesize Domaine de Voluceau, B.P. 105,
78153 Le Chesnay Cedex, France}
}
\date{\today}

\maketitle

\vspace{-4mm}

\begin{abstract}
We consider a nonlinear convex stochastic homogenization problem, in a
stationary setting. In
practice, the deterministic homogenized energy density can only be
approximated by a random apparent energy density, obtained by solving the
corrector problem on a truncated domain. 

We show that the technique of antithetic variables can be used to reduce
the variance of the computed quantities, and thereby decrease the
computational cost at equal accuracy. This leads to an efficient
approach for approximating expectations of the apparent homogenized
energy density and of related quantities. 

The efficiency of the approach is numerically illustrated on several
test cases. Some elements of analysis are also provided. 
\end{abstract}

\vspace{1mm}

\noindent
\textbf{Keywords:} stochastic homogenization, nonlinear problem,
variance reduction, antithetic variables.


\section{Introduction}

In this article, we consider some theoretical and numerical questions
related to variance reduction techniques for some nonlinear convex
stochastic homogenization problems. In short, we show here
that a technique based on antithetic variables can be used in that
context, provide some elements of analysis, and demonstrate numerically
the efficiency of that approach on several test cases. 
This work is a follow-up of the
articles~\cite{banff,mprf,cedya} where the same questions are considered
for a {\em linear} elliptic equation in divergence form. 

The stochastic homogenization problem we consider here writes as follows. 
Let $\D$ be an open bounded domain of $\RR^d$ and $2 \leq p < \infty$. 
We consider the highly oscillatory problem
\begin{equation}
\label{eq:pb0-stoc}
\inf \left\{ \int_{\D} W\left(\frac{x}{\eps}, \omega, \nabla u(x) \right)
  \, dx 
- \int_{\D} f(x) u(x) dx, \quad
u \in W^{1,p}_0(\D) \right\}
\end{equation}
for some $f$ and some random smooth field $W$, which is stationary in a
sense made precise below, and satisfies some convexity and growth
conditions such that, for any $\eps > 0$, problem~\eqref{eq:pb0-stoc} is
well-posed. See Section~\ref{sec:setting}
below for a precise description of the mathematical setting, which has
been introduced in~\cite{DalMaso-Modica,DalMaso-Modica2}. 
A classical example that motivated this framework is when
$$
W(y,\omega,\xi) = \frac{1}{p} a\left(y,\omega\right) \left| \xi
\right|^p,
$$
where $a$ is stationary (see e.g.~\cite[page 382]{DalMaso-Modica}).

In~\eqref{eq:pb0-stoc}, $\varepsilon$ denotes a supposedly small,
positive constant that models 
the smallest possible scale present in the problem. For $\varepsilon$
small, it is extremely expensive, in practice, to directly
attack~(\ref{eq:pb0-stoc}) with a numerical discretization. A useful
practical approach is to {\em first} approximate~\eqref{eq:pb0-stoc} by
its associated homogenized problem, which reads
\begin{equation}
\label{eq:pb0-stoc-homog}
\inf \left\{ \int_{\D} W^\star\left(\nabla u(x) \right) \, dx -
 \int_{\D} f(x) u(x) dx, \quad u \in W^{1,p}_0(\D) \right\},
\end{equation}
and \emph{next} numerically solve the latter problem. The two-fold advantage
of~\eqref{eq:pb0-stoc-homog} as compared to~\eqref{eq:pb0-stoc} is that
\emph{it is deterministic} and \emph{it does not involve the small scale
 $\varepsilon$}. 

This simplification comes at a price. The homogenized
energy density $W^\star$ in~\eqref{eq:pb0-stoc-homog} is given by
an integral involving a so-called corrector function, solution to a
nonlinear problem
(see~\eqref{eq:champ-homog} below for a precise formula). 
As most often in stochastic homogenization, this corrector problem
is set on the {\em entire} space $\RR^d$. In practice,
approximations are therefore in order. A standard approach (see
e.g.~\cite{bourgeat} in the linear setting) is to generate realizations
of the energy density $W$
over a finite, supposedly large volume at the
microscale, that we denote $Q_N$, and
approach the homogenized energy density by some empirical means using approximate
correctors computed on $Q_N$. Although the
\emph{exact} homogenized density $W^\star$ is deterministic, its practical
approximation is random, due to the truncation procedure. It is then
natural to generate several realizations. However,
efficiently averaging over these realizations require to understand how
variance affects the result. This is the purpose of the present article
to investigate some questions in this direction, both from the
theoretical and numerical standpoints. 

Before proceeding and for the sake of consistency, we
now present the framework of nonlinear stochastic
homogenization we adopt, and situate the questions under consideration
in a more general context.

\subsection{Homogenization theoretical setting}
 \label{sec:setting}

To begin with, we introduce the basic
setting of stochastic homogenization we will employ. We refer
to~\cite{engquist-souganidis} for a general, numerically oriented 
presentation, and to~\cite{blp,cd,jikov} for classical textbooks. 
We also refer to~\cite{enumath} and the review article~\cite{singapour}
(and the extensive bibliography contained therein) for a 
presentation of our particular setting. 
Throughout this article, $(\Omega, {\mathcal F}, \PP)$ is a
probability space and we denote by $\dis \EE(X) =
\int_\Omega X(\omega) d\PP(\omega)$ the expectation value of any
random variable $X\in L^1(\Omega, d\PP)$. We next fix
$d\in {\mathbb N}^\star$ (the ambient physical dimension), and assume that
the group $(\ZZ^d, +)$ acts on 
$\Omega$. We denote by $(\tau_k)_{k\in \ZZ^d}$ this action, and
assume that it preserves the measure $\PP$, that is, for all
$\displaystyle k\in \ZZ^d$ and all $A \in {\cal F}$, $\displaystyle
\PP(\tau_k A) = \PP(A)$. We assume that the action $\tau$ is {\em
  ergodic}, that is,
if $A \in {\mathcal F}$ is such that $\tau_k A = A$ for any $k \in
\ZZ^d$, then $\PP(A) = 0$ or 1. 
In addition, we define the following notion of stationarity
(see~\cite[Section 2.2]{singapour}): a function 
$F \in L^1_{\rm loc}\left(\RR^d, L^1(\Omega)\right)$ is said to be
{\em stationary} if, for all $k \in \ZZ^d$,
\begin{equation}
\label{eq:stationnarite-disc}
F(y+k, \omega) = F(y,\tau_k\omega)
\ \
\text{almost everywhere and almost surely.}
\end{equation}
In this setting, the ergodic theorem~\cite{krengel,shiryaev,tempelman}
can be stated as follows: 
{\it Let $F\in L^\infty\left(\RR^d, L^1(\Omega)\right)$ be a stationary random
variable in the above sense. For $k = (k_1,k_2, \dots, k_d) \in \ZZ^d$,
we set $\displaystyle |k|_\infty = \sup_{1\leq i \leq d} |k_i|$. Then
$$
\frac{1}{(2N+1)^d} \sum_{|k|_\infty\leq N} F(y,\tau_k\omega)
\mathop{\longrightarrow}_{N\rightarrow \infty}
\EE\left(F(y,\cdot)\right) \ \ \mbox{in $L^\infty(\RR^d)$, almost surely}.
$$
This implies that (denoting by $Q$ the unit cube in $\RR^d$)
$$
F\left(\frac x \varepsilon ,\omega \right)
\mathop{\longrightharpoonup}_{\varepsilon \rightarrow 0}^*
\EE\left(\int_Q F(y,\cdot)dy\right) \quad \mbox{in $L^\infty(\RR^d)$, almost surely}.
$$
}

The purpose of the above setting is simply to
formalize that, even though realizations may vary, the function $F$ at
point $y \in \RR^d$ and the function $F$ at point $y+k$, $k \in
\ZZ^d$, share the same law. In the homogenization context we now turn
to, this means that the local, microscopic environment (encoded in the
energy density $W$) is everywhere the same
\emph{on average}. From this, homogenized, macroscopic properties will follow.

\medskip

We now describe more precisely the multiscale random problem~\eqref{eq:pb0-stoc}. The
domain ${\mathcal D}$ is a regular (in the sense its boundaries are
Lipschitz-continuous) bounded domain of ${\mathbb R}^d$. 
The right-hand side function $f$ belongs to $L^{p'}(\D)$, with
$1/p+1/p'=1$ (hence $f$ 
is indeed in the dual space of $L^p(\D)$). 
For any $\xi \in \RR^d$, the random field $y,\omega \mapsto
W(y,\omega,\xi)$ is assumed stationary in the
sense~\eqref{eq:stationnarite-disc}. We assume that it is continuous
(and even $C^3$) with respect to the $\xi$ variable, and that it is
measurable with respect to the $y$ argument.
We also assume that there exists $c_2
\geq c_1 > 0$ such that
\begin{equation}
\label{eq:Caratheodory}
\forall y \in \RR^d, \ \ \forall \omega \in \Omega, \ \ \forall \xi \in
\RR^d, 
\quad c_1 |\xi|^p \leq W(y,\omega, \xi) \leq c_2(1+|\xi|^p).
\end{equation}
Furthermore, we assume henceforth that $W$ is
{\em strictly convex} with respect to the argument $\xi$, in the sense
that 
\begin{equation}
\label{eq:hyp_strict_convex}
\forall \eta \in \RR^d, \quad \forall \xi \in \RR^d, \quad
\eta^T \partial^2_\xi W(y,\omega,\xi) \eta > 0 
\ \text{ a.e. and a.s.},
\end{equation}
where $\partial^2_\xi W \in \RR^{d \times d}$ is the Hessian matrix of
$\xi \mapsto W(y,\omega,\xi)$. A more demanding assumption is that $W$ is
$\alpha$-{\em convex} with respect to the argument $\xi$, in the sense
that there exists $\alpha > 0$ such that
\begin{equation}
\label{eq:hyp_alpha_convex}
\forall \eta \in \RR^d, \quad \forall \xi \in \RR^d, \quad
\eta^T \partial^2_\xi W(y,\omega,\xi) \eta \geq \alpha \, \eta^T \eta
\ \text{ a.e. and a.s.}
\end{equation}
Unless otherwise stated, we only assume~\eqref{eq:hyp_strict_convex} in
the sequel. When needed, we will explicitly
assume~\eqref{eq:hyp_alpha_convex}.

Under~\eqref{eq:Caratheodory} and~\eqref{eq:hyp_strict_convex}, the
variational problem~\eqref{eq:pb0-stoc} is 
well-posed. In addition, the homogenized limit of~\eqref{eq:pb0-stoc}
has been identified in~\cite{DalMaso-Modica,DalMaso-Modica2} (see
also~\cite[Theorem 3.1]{neukamm}): the unique solution
$u^\eps(\cdot,\omega)$ to~\eqref{eq:pb0-stoc} converges (weakly in
$W^{1,p}(\D)$ and strongly in $L^p(\D)$, almost surely)
to some deterministic function $u^\star \in
W^{1,p}(\D)$, solution to~\eqref{eq:pb0-stoc-homog}, where the
homogenized energy density $W^\star$ is given, for any $\xi \in \RR^d$,
by 
\begin{equation}
\label{eq:champ-homog}
W^\star(\xi) = \lim_{N \to \infty} 
\inf \left\{ \frac{1}{|Q_N|} 
\int_{Q_N} W\left(y,\omega, \xi + \nabla w(y) \right) \, dy,
\quad w \in W^{1,p}_\#(Q_N) \right\}
\end{equation}
where $Q_N = (-N,N)^d$ and where $W^{1,p}_\# (Q_N)$ denotes the set of
functions that belong to $W^{1,p}_{\rm loc}(\RR^d)$ and are
$Q_N$-periodic. The convergence in~\eqref{eq:champ-homog} holds almost surely.

\subsection{The questions we consider}
\label{sec:questions}

In practice, we cannot compute $W^\star(\xi)$, and have to restrict
ourselves to finite size domains. We therefore introduce
\begin{equation}
\label{eq:correc-random_N}
W^\star_N(\omega,\xi) := 
\inf \left\{ \frac{1}{|Q_N|} \int_{Q_N} W\left(y,\omega, \xi + \nabla
    w(y) \right) \, dy, 
\quad w \in W^{1,p}_\#(Q_N) \right\}
\end{equation}
and readily see from~\eqref{eq:champ-homog} that 
$$
W^\star(\xi) = \lim_{N \to \infty} W^\star_N(\omega,\xi) \text{ a.s.}
$$
As briefly explained above, although $W^\star$ itself is a deterministic
object, its practical approximation $W^\star_N$ is random. It is only in
the limit of infinitely large domains $Q_N$ that the deterministic 
value is attained. This is a standard situation in stochastic
homogenization. 

Many studies have been recently devoted (at least in the linear case) to
establishing sharp estimates on the convergence of the random apparent
homogenized quantities (computed on $Q_N$) to the exact deterministic
homogenized quantities. We refer e.g. to~\cite{bourgeat,gloria-otto} and
to the comprehensive discussion of~\cite[Section 1.2]{mprf}. 
We take here the problem from a slightly different perspective. We
observe that the error
$$
W^\star(\xi) - W_N^\star(\omega,\xi)
=
\Big(
W^\star(\xi) - \EE \left[ W_N^\star(\cdot,\xi) \right]
\Big)
+
\Big(
\EE \left[ W_N^\star(\cdot,\xi) \right]
- W_N^\star(\omega,\xi)
\Big)
$$
is the sum of a systematic error (the first term in the above right-hand
side) and of a statistical error (the second term in the above right-hand
side).
We {\em focus here on the statistical error}, and propose approaches to
reduce the confidence interval of empirical means approximating
$\EE \left[ W_N^\star(\cdot,\xi) \right]$ (or similar quantities),
for a given truncated domain $Q_N$.

\medskip

Recall that a standard technique to compute an approximation of $\EE \left[
  W^\star_N(\cdot, \xi) \right]$ is to consider several independent and
identically distributed realizations of the energy density $W$, solve for each of
them the corrector problem~\eqref{eq:correc-random_N} (thereby obtaining
several i.i.d. values
$W^{\star,m}_N(\omega,\xi)$), and proceed following a Monte Carlo approach:
$$
\EE \left[ W^\star_N(\cdot, \xi) \right]
\approx
I_{2M} := \frac{1}{2M} \sum_{m=1}^{2M} W^{\star,m}_N(\omega,\xi).
$$
In view of the Central Limit Theorem, we know that our quantity of
interest $\EE \left[ W^\star_N(\cdot, \xi) \right]$ lies in the
confidence interval
$$
\left[
I_{2M} - 1.96 \frac{\sqrt{\Var \left[ W^\star_N(\cdot, \xi)\right]}}{\sqrt{2M}}
,
I_{2M} + 1.96 \frac{\sqrt{\Var \left[ W^\star_N(\cdot, \xi)\right]}}{\sqrt{2M}}
\right]
$$
with a probability equal to 95 \%. 

In this article, we show that, using a well known variance reduction
technique, the technique of {\em antithetic variables}~\cite[page
27]{liu}, we can design a practical approach that, for
finite $N$ and any vector $\xi$, allows to compute a better
approximation of $\EE \left[ W^\star_N(\cdot, \xi) \right]$ (and
likewise for similar homogenized quantities). Otherwise stated, for an
equal computational cost, the approach provides a more accurate
(i.e. with a smaller confidence interval) approximation. We thereby
extend to this nonlinear convex setting the results of~\cite{banff,mprf,cedya}
obtained in the linear case. 

\medskip

Our article is articulated as follows. In Section~\ref{sec:main_results}, we
describe the proposed approach, and state our main results. The
ingredients to prove these results are collected in
Sections~\ref{sec:antithetic},~\ref{sec:expression}
and~\ref{sec:monoton}. The actual proof of our main results is performed
in Section~\ref{sec:proof}. We make there several structural assumptions on
the form of the energy density $W$ to obtain these variance reduction
results. In Section~\ref{sec:examples}, we describe a general class of
examples for which our assumptions are indeed satisfied. We next turn in
Section~\ref{sec:num} to some illustrative numerical examples, where we
demonstrate the efficiency of the approach, even in cases where the
theoretical analysis is incomplete. 

\section{Description of the proposed approach and main results}
\label{sec:math}

\subsection{Statement of our main results}
\label{sec:main_results}

This section is devoted to the presentation and the analysis of our
approach. We first focus on estimating the expectation 
$\EE\left[W^\star_N(\cdot,\xi)\right]$ of the apparent homogenized
energy density (see Section~\ref{sec:0}). Our variance reduction result,
Proposition~\ref{prop:strong}, shows that the technique of antithetic
variables is indeed efficient. As often the case, it is difficult to
{\em quantitatively} assess how efficient the approach is, and this will
be the purpose of the numerical tests described in Section~\ref{sec:num}
to address this question. 

We then turn to the estimation of the first (and next second)
derivatives of $W^\star_N(\cdot,\xi)$ with respect to $\xi$. These
quantities naturally appear when one solves the convex homogenized
problem~\eqref{eq:pb0-stoc-homog} (approximating $W^*$ by $W^\star_N(\omega,\cdot)$),
e.g. using a Newton algorithm. For these two quantities, our result is
restricted to the one-dimensional setting. See Section~\ref{sec:1} and
Proposition~\ref{prop:var-red-der} for the first derivative, and
Section~\ref{sec:2} and Proposition~\ref{prop:var-red-der2} for the
second derivative.

\medskip

Sections~\ref{sec:antithetic},~\ref{sec:expression},~\ref{sec:monoton}
and~\ref{sec:proof} are devoted to the proof of the results stated
here. In Section~\ref{sec:examples}, we discuss an explicit class of
energy densities $W$ that falls into our framework.

\subsubsection{Variance reduction on the homogenized energy density}
\label{sec:0}

In this section, we make the following two \emph{structure assumptions}
on the rapidly oscillating field $W$ of~\eqref{eq:pb0-stoc}. First, we
assume that, for any $N$, there exists an integer $n$ (possibly $n =
|Q_N|$, but not necessarily) and a function ${\cal A}$, defined on $Q_N
\times \RR^n \times \RR^d$, such that the field $W(y,\omega,\xi)$ writes
\begin{equation}
\label{eq:structure1}
\forall y \in Q_N, \ \forall \xi \in \RR^d, \quad
W(y,\omega,\xi) = 
{\cal A}\left(y,X_1(\omega),\ldots,X_n(\omega),\xi\right) \quad \text{a.s.},
\end{equation}
where $\left\{ X_k(\omega) \right\}_{1 \leq k \leq n}$ are independent scalar
random variables, which are all distributed according to the uniform law
${\cal U}[0,1]$. In general, the function ${\cal A}$, as well as the
number $n$ of independent, identically distributed variables involved
in~\eqref{eq:structure1}, depend on $N$, the size of $Q_N$, although
this dependency is not made explicit in~\eqref{eq:structure1}. 

Second, we assume that the function ${\cal A}$ in~\eqref{eq:structure1} is
such that, for all $y \in Q_N$ and all $\xi \in \RR^d$, the map
\begin{equation}
\label{eq:structure2}
(x_1,\ldots,x_n) \in \RR^n \mapsto {\cal A}(y,x_1,\ldots,x_n, \xi)
\end{equation}
is non-decreasing with respect to each of its arguments. 

\medskip

\begin{proposition}
\label{prop:strong}
We assume~\eqref{eq:structure1}--\eqref{eq:structure2}. Let
$W^\star_N(\omega,\xi)$ be the approximated homogenized energy density
field defined by~\eqref{eq:correc-random_N}.
We define on $Q_N$ the field
$$
W^{\rm ant}(y, \omega, \xi) := 
{\cal A}(y,1-X_1(\omega),\ldots,1-X_n(\omega),\xi),
$$
antithetic to $W$ defined by~\eqref{eq:structure1}.
We associate to this field the approximate homogenized energy density
field $W^{{\rm ant},\star}_N(\omega,\xi)$, defined
by~\eqref{eq:correc-random_N} (replacing $W$ by $W^{\rm ant}$). Set
\begin{equation}
\label{eq:def_a_tilde}
\widetilde{W}^\star_N(\omega,\xi) := \frac12 
\left( W^\star_N(\omega,\xi) +  W^{{\rm ant},\star}_N(\omega,\xi) \right).
\end{equation}
Then, for any $\xi \in \RR^d$,
\begin{equation}
\label{eq:resu0_1}
\EE\left[\widetilde{W}^\star_N(\cdot,\xi) \right] 
= 
\EE\left[W^\star_N(\cdot,\xi)\right]
\ \ \text{and} \ \
\Var \left[\widetilde{W}^\star_N(\cdot,\xi) \right] 
\leq 
\frac12 \Var \left[W^\star_N(\cdot, \xi) \right].
\end{equation}
Otherwise stated, $\widetilde{W}^\star_N(\omega,\xi)$ is a random
variable which has the same expectation as $W^\star_N(\omega,\xi)$, and
its variance is smaller than half of that of $W^\star_N(\omega,\xi)$. 
\end{proposition}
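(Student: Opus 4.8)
The equality of expectations is immediate: since each $X_k$ is uniform on $[0,1]$, the vector $(1-X_1,\ldots,1-X_n)$ has the same law as $(X_1,\ldots,X_n)$, hence $W^{\rm ant}$ has the same law as $W$, and therefore $W^{{\rm ant},\star}_N(\cdot,\xi)$ and $W^\star_N(\cdot,\xi)$ are identically distributed. Taking expectations in~\eqref{eq:def_a_tilde} gives $\EE[\widetilde W^\star_N(\cdot,\xi)] = \EE[W^\star_N(\cdot,\xi)]$.

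For the variance bound, the plan is the classical antithetic-variables argument. Write $A = W^\star_N(\cdot,\xi)$ and $B = W^{{\rm ant},\star}_N(\cdot,\xi)$; these have equal variance $V := \Var[A] = \Var[B]$, so
$$
\Var\!\left[\tfrac12(A+B)\right] = \tfrac14\big(\Var[A] + \Var[B] + 2\,\Cov[A,B]\big) = \tfrac12 V + \tfrac12 \Cov[A,B].
$$
Thus the desired inequality $\Var[\widetilde W^\star_N] \le \tfrac12 V$ is equivalent to $\Cov[A,B] \le 0$. So the whole proof reduces to showing that $W^\star_N(\cdot,\xi)$ and $W^{{\rm ant},\star}_N(\cdot,\xi)$ are negatively correlated.

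To get the negative correlation, I would invoke a monotonicity-plus-antitone-coupling result (this is presumably what Section~\ref{sec:monoton}, and the "ingredients" Sections~\ref{sec:antithetic}--\ref{sec:expression}, are set up to provide). The key structural observation is that, because of assumptions~\eqref{eq:structure1}--\eqref{eq:structure2}, the map $(x_1,\ldots,x_n)\mapsto \mathcal A(y,x_1,\ldots,x_n,\xi)$ is non-decreasing in each argument, and one should check that this monotonicity is inherited by $W^\star_N$ viewed as a function of $(X_1,\ldots,X_n)$: that is, there is a measurable $g$ with $A = g(X_1,\ldots,X_n)$, non-decreasing coordinatewise, so that $B = g(1-X_1,\ldots,1-X_n)$. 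The reason $g$ is monotone is that $W^\star_N$ is an infimum over correctors $w$ of $|Q_N|^{-1}\int_{Q_N}\mathcal A(y,x_1,\ldots,x_n,\xi+\nabla w)$, and an infimum of functions each non-decreasing in the parameters $x_i$ is itself non-decreasing. Granting that, one applies the standard fact that if $g$ is coordinatewise monotone then $g(X)$ and $g(1-X)$ have nonpositive covariance — which itself follows from Harris/FKG-type inequalities, or more elementarily from Chebyshev's sum/association inequality applied to the antitone coupling $(X,1-X)$, possibly combined with a one-variable-at-a-time conditioning argument (fix $X_2,\ldots,X_n$, handle the scalar case, then induct).

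The main obstacle, and the only genuinely nonroutine point, is justifying that $g$ is indeed coordinatewise non-decreasing — i.e. that the infimum defining $W^\star_N$ preserves the monotonicity in the $x_i$ pointwise-in-$y$ assumption~\eqref{eq:structure2}. This is where one must be slightly careful: the minimizing corrector $w$ depends on $(x_1,\ldots,x_n)$, so one cannot simply differentiate; instead one argues that for fixed $w$ the functional is non-decreasing in each $x_i$ (since the integrand is, by~\eqref{eq:structure2}, and integration in $y$ preserves this), and then that taking the infimum over $w$ preserves the property (an infimum of a family of non-decreasing functions is non-decreasing). Once this monotonicity of $g$ is in hand, the negative correlation $\Cov[A,B]\le 0$ and hence the variance inequality follow from the standard antithetic-variables lemma, which I would expect to have been recorded in Section~\ref{sec:antithetic}.
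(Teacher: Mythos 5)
Your proposal is correct and follows essentially the same route as the paper: the expectation identity via equality in law, the reduction of the variance bound to $\Cov\left[W^\star_N, W^{{\rm ant},\star}_N\right]\leq 0$ (which is the content of the paper's Corollary~\ref{cor:antithetic}, itself proved by the one-variable-then-induct association argument you sketch), and, crucially, the observation that $W^\star_N$ is coordinatewise non-decreasing in $(x_1,\ldots,x_n)$ because an infimum of functionals each non-decreasing in the $x_i$ is non-decreasing — which is exactly the paper's Lemma~\ref{lem:monotony}. You correctly identified the one genuinely nonroutine step and supplied the right argument for it.
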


As mentioned above, this result generalizes~\cite[Proposition 2.1]{mprf}
to the nonlinear convex variational setting considered here.

\medskip

Before proceeding, we briefly explain the usefulness of the above
result for variance reduction techniques. Assume we want to compute the
expectation of $W^\star_N(\omega,\xi)$, for some fixed vector $\xi \in
\RR^d$. Following the classical Monte-Carlo method recalled in
Section~\ref{sec:questions}, we estimate $\EE\left[ W^\star_N(\cdot,\xi)
\right]$ by its empirical mean. To this end, we consider $2M$
independent, identically distributed copies
$\left\{ W_m(y,\omega,\xi) \right\}_{1 \leq m \leq 2M}$ of the random field 
$W(y,\omega,\xi)$ on $Q_N$. To each copy $W_m$,
we associate an approximate homogenized energy density $W^{\star,m}_N(\omega,\xi)$,
defined by~\eqref{eq:correc-random_N}. We
next introduce the empirical mean 
\begin{equation}
\label{eq:estim1}
I_{2M} = \frac{1}{2M} 
\sum_{m=1}^{2M} W^{\star,m}_N(\omega,\xi),
\end{equation}
and consider that, in practice, the mean $\EE\left[ W^\star_N(\cdot,\xi)
\right]$ is equal to the estimator $I_{2M}$ within an approximate
margin of error  
$\dps 1.96 \frac{\sqrt{\Var \left[ W^\star_N(\cdot,\xi) \right]}}{\sqrt{2M}}$. 

\medskip

Alternate to considering~\eqref{eq:estim1}, we may consider
\begin{equation}
\label{eq:estim2}
\widetilde{I}_{2M} = 
\frac{1}{M} 
\sum_{m=1}^{M} \widetilde{W}^{\star,m}_N(\omega,\xi),
\end{equation}
where $\widetilde{W}^{\star,m}_N$ is defined by~\eqref{eq:def_a_tilde}.
Again, in practice, the mean
$\dps \EE\left[ W^\star_N(\cdot,\xi) \right] = 
\EE\left[ \widetilde{W}^\star_N(\cdot,\xi) \right]$ is equal to
$\widetilde{I}_{2M}$ within an
approximate margin of error $\dps 1.96 
\frac{\sqrt{\Var \left[ \widetilde{W}^\star_N(\cdot,\xi) \right]}}{\sqrt{M}}$. 
Observe now that both estimators~\eqref{eq:estim1} and~\eqref{eq:estim2}
are of equal cost, since they require the same number $2M$ of 
corrector problems to be solved. The
accuracy of the latter is better if and only if  
$\dps \Var\left[ \widetilde{W}^\star_N(\cdot,\xi) \right] \leq \frac12 
\Var\left[ W^\star_N(\cdot, \xi) \right]$, which is exactly the
bound~\eqref{eq:resu0_1} of Proposition~\ref{prop:strong}. 

\subsubsection{Variance reduction on the first derivative of the homogenized
  energy density}
\label{sec:1}

Restricting ourselves to the one-dimensional setting, we now state a
variance reduction result for the estimation of 
$\EE \left[ \xi \partial_\xi W^\star_N(\cdot,\xi) \right]$.
Note that, to distinguish derivatives with respect to $y$ from
derivatives with respect to $\xi$, we keep the notation $\partial_\xi
W$, even though we are in the one-dimensional situation.

We again make the structure assumption~\eqref{eq:structure1}, and
observe that it implies that
$$
\forall y \in (-N,N), \ \forall \xi \in \RR, \quad
\xi \partial_\xi W(y,\omega,\xi) 
= 
{\cal A}_1\left(y,X_1(\omega),\ldots,X_n(\omega),\xi\right) \quad \text{a.s.},
$$
where $\left\{ X_k(\omega) \right\}_{1 \leq k \leq n}$ are scalar
i.i.d. random variables, which are all distributed according to the uniform law
${\cal U}[0,1]$, and 
where the function ${\cal A}_1$, defined on $(-N,N) \times \RR^n \times
\RR$, is given by 
\begin{equation}
\label{eq:def_A1}
{\cal A}_1(y,x,\xi) = 
\xi \partial_\xi{\cal A}(y,x,\xi).
\end{equation} 
In addition, we assume that, for all $y \in (-N,N)$ and all $\xi \in
\RR$, the map 
\begin{equation}
\label{eq:structure-der2}
(x_1,\ldots,x_n) \in \RR^n \mapsto {\cal A}_1(y,x_1,\ldots,x_n, \xi)
\end{equation}
is non-decreasing with respect to each of its arguments. 

We recall that the function $\xi \mapsto W(y,\omega,\xi)$ is strictly
convex (see assumption~\eqref{eq:hyp_strict_convex}) and
satisfies~\eqref{eq:Caratheodory}. It
therefore has a unique minimizer $\xi_0(y,\omega)$. 
In the sequel, we consider energy densities such that this minimizer 
is independent of $y$ and $\omega$. Without loss of generality, we can assume
that $\xi_0 = 0$. We thus consider energy densities $W$ such that
\begin{equation}
\label{hyp:0-min-sto}
\text{$\xi \mapsto W(y,\omega,\xi)$ attains its minimum at $\xi = 0$,
  a.e. and a.s.} 
\end{equation}

\begin{proposition}
\label{prop:var-red-der}
Let $d=1$, and
assume~\eqref{eq:structure1},~\eqref{eq:def_A1},~\eqref{eq:structure-der2}
and~\eqref{hyp:0-min-sto}. 
We introduce
\begin{equation}
\label{eq:def-tilde-der1}
\widetilde{\xi  \partial_\xi W^\star_N} (\omega,\xi) 
:= 
\frac12 \left( 
\xi \partial_\xi W^{{\rm ant},\star}_N \left(\omega,\xi \right) 
+ 
\xi \partial_\xi W^\star_N \left( \omega, \xi \right) \right),
\end{equation}
where $W^{{\rm ant},\star}_N \left(\omega,\xi \right)$ and  
$W^\star_N \left( \omega, \xi \right)$ are defined as in
Proposition~\ref{prop:strong}. Then, for any $\xi \in \RR$,
\begin{equation}
\label{eq:resu_der}
\EE \left[ \widetilde{\xi \partial_\xi W^\star_N}(\cdot,\xi) \right] 
=  
\EE \left[ \xi \partial_\xi W^\star_N(\cdot,\xi) \right]
\quad \text{and} \quad
\Var \left[ \widetilde{\xi \partial_\xi W^\star_N}(\cdot,\xi) \right] 
\leq 
\frac12 \Var \left[ \xi \partial_\xi W^\star_N(\cdot,\xi) \right].
\end{equation}
\end{proposition}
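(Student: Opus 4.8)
\textbf{Proof proposal for Proposition~\ref{prop:var-red-der}.}

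The plan is to reduce this statement to a general variance-reduction lemma about antithetic transforms of monotone functionals, exactly as was done for Proposition~\ref{prop:strong}. First I would observe that, by the structure assumption~\eqref{eq:structure1} together with~\eqref{eq:def_A1} and~\eqref{eq:structure-der2}, the quantity $\xi\partial_\xi W^\star_N(\omega,\xi)$ can be written, for fixed $\xi$, as $\Phi(X_1(\omega),\dots,X_n(\omega))$ for some function $\Phi\colon\RR^n\to\RR$, and the antithetic variable $\xi\partial_\xi W^{{\rm ant},\star}_N(\omega,\xi)$ equals $\Phi(1-X_1(\omega),\dots,1-X_n(\omega))$. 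The crucial point to establish is that $\Phi$ is \emph{monotone in each coordinate} (all coordinates in the same direction): once that is known, the abstract antithetic-variables lemma (the one underlying Proposition~\ref{prop:strong}, which only uses that the map from the i.i.d.\ uniforms to the output is coordinatewise monotone) yields immediately that $\EE[\Phi(1-X)]=\EE[\Phi(X)]$ (same law), hence the equality of expectations, and that $\Cov[\Phi(X),\Phi(1-X)]\le 0$, hence $\Var[\frac12(\Phi(X)+\Phi(1-X))]\le\frac12\Var[\Phi(X)]$, which is~\eqref{eq:resu_der}.

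So the real work is the monotonicity of $\Phi$, i.e.\ showing that $\xi\partial_\xi W^\star_N(\omega,\xi)$ is a non-decreasing function of each $X_k$. This is where assumptions~\eqref{eq:structure-der2} and~\eqref{hyp:0-min-sto} enter, and where the one-dimensional restriction is used. The strategy I would follow is: write $\xi\partial_\xi W^\star_N(\omega,\xi)$ via an Euler--Lagrange / envelope argument in terms of the optimal corrector $w_N(\cdot,\omega,\xi)$. In $d=1$, the corrector problem~\eqref{eq:correc-random_N} can be solved more or less explicitly: the minimizer satisfies $\partial_\xi W(y,\omega,\xi+w_N'(y))=\text{const}$ in $y$, and by periodicity $\fint_{Q_N}(\xi+w_N'(y))\,dy=\xi$. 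Differentiating the energy at the optimum with respect to $\xi$ (the interior terms vanish by optimality), one gets $\partial_\xi W^\star_N(\omega,\xi)=\fint_{Q_N}\partial_\xi W(y,\omega,\xi+w_N'(y))\,dy$, which is the constant value of the flux. Multiplying by $\xi$ and rewriting $\xi=\fint_{Q_N}(\xi+w_N'(y))\,dy$, one should be able to express $\xi\partial_\xi W^\star_N(\omega,\xi)$ as an average of ${\cal A}_1\bigl(y,X(\omega),\xi+w_N'(y)\bigr)$ type quantities, or more precisely to relate it to the minimization of a functional built from ${\cal A}_1$. The hypothesis~\eqref{hyp:0-min-sto} guarantees that $\xi$ and $\xi+w_N'(y)$ have the same sign as the flux, so that the product with $\xi$ interacts correctly with the sign of $\partial_\xi W$, keeping the relevant quantities monotone.

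The main obstacle I anticipate is precisely controlling how the optimal corrector $w_N'(y)$ itself depends on the $X_k$: one is differentiating (or taking difference quotients of) an argmin, and although in $d=1$ this is tractable, one must show that increasing some $X_k$ moves the whole profile $\xi+w_N'(\cdot)$ and the flux in a way compatible with the desired monotonicity of $\xi\partial_\xi W^\star_N$. I would handle this by a comparison argument: freeze all $X_j$, $j\neq k$, increase $X_k$ from $x_k$ to $x_k'$, and compare the two optimal correctors using strict convexity~\eqref{eq:hyp_strict_convex} and the monotonicity~\eqref{eq:structure-der2} of ${\cal A}_1$ — presumably invoking the monotonicity results collected in Section~\ref{sec:monoton} (referenced in the excerpt). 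An alternative, perhaps cleaner, route is to avoid differentiating the argmin altogether: use the characterization that $\partial_\xi W^\star_N$ is the (unique) flux $\lambda$ such that the equation $\partial_\xi W(y,\omega,p(y))=\lambda$, $p$ with mean $\xi$ over $Q_N$, is solvable, and then show directly via the strict monotonicity of $\xi\mapsto\partial_\xi W$ and hypothesis~\eqref{eq:structure-der2} that $\lambda$ (hence $\xi\lambda$, using the sign information from~\eqref{hyp:0-min-sto}) is monotone in each $X_k$. Either way, once coordinatewise monotonicity of $\Phi$ is in hand, the conclusion follows from the same abstract antithetic lemma as Proposition~\ref{prop:strong}, with no further computation.
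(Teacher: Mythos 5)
Your proposal is correct and follows essentially the same route as the paper: reduce to the abstract antithetic-variables lemma (Corollary~\ref{cor:antithetic}) by exhibiting $\xi\partial_\xi W^\star_N$ as a coordinatewise non-decreasing function of the uniforms, with the monotonicity supplied by the one-dimensional flux characterization. Your ``alternative, cleaner route'' is precisely the paper's Lemma~\ref{lem:mon-der-one-d}: the constant flux $\lambda=\partial_\xi W^\star_N$ is pinned down by $\xi=\fint_{Q_N}\psi(y,\lambda)\,dy$ with $\psi$ the inverse of $\partial_\xi W(y,\cdot)$, the sign of $\lambda$ matches that of $\xi$ by~\eqref{hyp:0-min-sto}, and monotonicity of $\xi\lambda$ in each $X_k$ follows from~\eqref{eq:structure-der2} and the monotonicity of the reciprocal functions.
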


\subsubsection{Variance reduction on the second derivative of the
  homogenized energy density}
\label{sec:2}

Considering again the one-dimensional setting as in
Section~\ref{sec:1}, we eventually state a
variance reduction result for the estimation of 
$\EE \left[ \partial^2_\xi W^\star_N(\cdot,\xi) \right]$.

Recall that, for any $y$ and $\omega$, the map $\xi \mapsto \partial_\xi
W(y,\omega,\xi)$ is increasing. We can therefore introduce its reciprocal
function $\zeta \mapsto \psi(y,\omega,\zeta)$, which is also
increasing. 

We again make the structure assumption~\eqref{eq:structure1}, and
observe that it implies that, for any $y \in (-N,N)$ and any $\zeta \in
\RR$, 
$$
\partial^2_\xi W\left(y, \omega, \psi(y, \omega, \zeta) \right) 
= 
{\cal A}_2(y,X_1(\omega),\ldots,X_n(\omega),\zeta) \quad \text{a.s.},
$$
where $\left\{ X_k(\omega) \right\}_{1 \leq k \leq n}$ are scalar
i.i.d. random variables, which are all distributed according to the uniform law
${\cal U}[0,1]$, and where the function ${\cal A}_2$, defined on $(-N,N)
\times \RR^n \times \RR$, is given by
\begin{equation}
\label{eq:def_A2}
{\cal A}_2(y,x,\zeta) = 
\partial^2_\xi{\cal A} \left(
y,x,\left[\partial_\xi{\cal A} (y,x,\cdot)\right]^{-1} (\zeta) \right),
\end{equation}
where $\zeta \mapsto \left[\partial_\xi{\cal A} (y,x,\cdot)\right]^{-1} (\zeta)$
is the reciprocal function of 
$\xi \mapsto \partial_\xi{\cal A} (y,x,\xi)$.

In addition, we assume that, for all $y \in (-N,N)$ and all $\zeta \in
\RR$, the map 
\begin{equation}
\label{eq:structure-der22}
(x_1,\ldots,x_n) \in \RR^n \mapsto {\cal A}_2(y,x_1,\ldots,x_n, \zeta)
\end{equation}
is non-decreasing with respect to each of its arguments. 

\begin{proposition}
\label{prop:var-red-der2}
Let $d=1$, and
assume~\eqref{eq:structure1},~\eqref{eq:def_A1},~\eqref{eq:structure-der2},~\eqref{eq:def_A2}
and~\eqref{eq:structure-der22}. We also assume
that~\eqref{hyp:0-min-sto} holds, and that
\begin{equation}
\label{eq:struct-monotony-sec-der-sto}
\begin{array}{c}
\xi \mapsto \partial^2_\xi W \left(y,\omega,\xi \right) \text{is non
  decreasing for $\xi \geq 0$}
\\
\text{and non increasing for $\xi \leq 0$, a.e. and a.s.}
\end{array}
\end{equation}
We introduce
$$
\widetilde{\partial^2_\xi W^\star_N} (\omega,\xi) 
:= 
\frac12 \left( 
\partial^2_\xi W^{{\rm ant},\star}_N \left(\omega,\xi \right) 
+ 
\partial^2_\xi W^\star_N \left( \omega, \xi \right) \right),
$$
where $W^{{\rm ant},\star}_N \left(\omega,\xi \right)$ and  
$W^\star_N \left( \omega, \xi \right)$ are defined as in
Proposition~\ref{prop:strong}. Then, for any $\xi \in \RR$,
\begin{equation}
\label{eq:resu_der2}
\EE \left[ \widetilde{\partial^2_\xi W^\star_N}(\cdot,\xi) \right] 
=  
\EE \left[ \partial^2_\xi W^\star_N(\cdot,\xi) \right]
\quad \text{and} \quad
\Var \left[ \widetilde{\partial^2_\xi W^\star_N}(\cdot,\xi) \right] 
\leq 
\frac12 \Var \left[ \partial^2_\xi W^\star_N(\cdot,\xi) \right].
\end{equation}
\end{proposition}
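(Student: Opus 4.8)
The plan is to reduce Proposition~\ref{prop:var-red-der2} to the abstract antithetic-variable mechanism already used for Propositions~\ref{prop:strong} and~\ref{prop:var-red-der}. Recall the general principle (see~\cite[page 27]{liu}): if $Z = \Phi(X_1,\ldots,X_n)$ with $\Phi$ non-decreasing in each argument and the $X_k$ i.i.d.\ uniform on $[0,1]$, and if $Z^{\rm ant} = \Phi(1-X_1,\ldots,1-X_n)$, then $\EE[Z^{\rm ant}] = \EE[Z]$ (same law), and $\mathrm{Cov}(Z,Z^{\rm ant}) \leq 0$, whence $\Var\big[\tfrac12(Z+Z^{\rm ant})\big] = \tfrac14\big(\Var[Z] + \Var[Z^{\rm ant}] + 2\,\mathrm{Cov}(Z,Z^{\rm ant})\big) \leq \tfrac12 \Var[Z]$. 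The equality of expectations is then immediate. So the entire content of the proposition is: show that $\partial^2_\xi W^\star_N(\omega,\xi)$ can be written as a non-decreasing function of the $X_k(\omega)$.

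First I would establish a usable closed form for $\partial^2_\xi W^\star_N(\omega,\xi)$ in $d=1$. Under~\eqref{hyp:0-min-sto} the corrector problem~\eqref{eq:correc-random_N} in one dimension can be solved explicitly: minimizing $\frac{1}{2N}\int_{-N}^N W(y,\omega,\xi + w'(y))\,dy$ over periodic $w$ forces $\partial_\xi W(y,\omega,\xi+w'(y))$ to be a constant $\lambda = \lambda_N(\omega,\xi)$ in $y$ (Euler--Lagrange equation, since $W$ is strictly convex and the constraint is only $\int_{-N}^N w' = 0$). Using the reciprocal function $\psi(y,\omega,\cdot) = [\partial_\xi W(y,\omega,\cdot)]^{-1}$ introduced before the statement, this reads $\xi + w'(y) = \psi(y,\omega,\lambda)$, and the zero-average constraint determines $\lambda$ via $\frac{1}{2N}\int_{-N}^N \psi(y,\omega,\lambda)\,dy = \xi$. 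Differentiating $W^\star_N(\omega,\xi) = \frac{1}{2N}\int_{-N}^N W(y,\omega,\psi(y,\omega,\lambda))\,dy$ twice in $\xi$ — using that $\partial_\xi W^\star_N = \lambda$ (an envelope/Lagrange-multiplier identity, as in Section~\ref{sec:1}) — gives
\begin{equation}
\label{eq:d2WN-formula}
\partial^2_\xi W^\star_N(\omega,\xi) = \partial_\xi \lambda_N(\omega,\xi) = \left( \frac{1}{2N}\int_{-N}^N \frac{dy}{\partial^2_\xi W\big(y,\omega,\psi(y,\omega,\lambda_N(\omega,\xi))\big)} \right)^{-1},
\end{equation}
since $\partial_\zeta \psi(y,\omega,\zeta) = 1/\partial^2_\xi W(y,\omega,\psi(y,\omega,\zeta))$. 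Equivalently, with the notation of~\eqref{eq:def_A2}, the integrand is $1/{\cal A}_2(y,X_1,\ldots,X_n,\lambda_N)$, so $\partial^2_\xi W^\star_N$ is the harmonic mean over $Q_N$ of ${\cal A}_2(y,X_1,\ldots,X_n,\lambda_N)$.

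Next I would prove the monotonicity. Fix $\xi$ and view $\lambda_N$ as a function of $(x_1,\ldots,x_n)$. From the implicit relation $\frac{1}{2N}\int_{-N}^N [\partial_\xi{\cal A}(y,x,\cdot)]^{-1}(\lambda)\,dy = \xi$ together with the fact (assumption~\eqref{eq:structure-der2} combined with $\xi\partial_\xi{\cal A} = {\cal A}_1$ and strict convexity, evaluated so as to control the sign of $\partial_{x_k}\partial_\xi{\cal A}$) one reads off the sign of $\partial_{x_k}\lambda_N$; then one differentiates~\eqref{eq:d2WN-formula} in $x_k$, using~\eqref{eq:struct-monotony-sec-der-sto} to control how $\psi(y,\omega,\lambda)$ moves relative to $0$ and hence the sign of the variation of $\partial^2_\xi W(y,\omega,\psi(y,\omega,\lambda))$, and assumption~\eqref{eq:structure-der22} to control the direct dependence of ${\cal A}_2$ on $x_k$. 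The combined effect must be that $(x_1,\ldots,x_n)\mapsto \partial^2_\xi W^\star_N$ is non-decreasing in each $x_k$; once this is shown, applying the abstract antithetic lemma to $Z = \partial^2_\xi W^\star_N(\omega,\xi) = \Phi(X_1,\ldots,X_n)$ with $Z^{\rm ant} = \partial^2_\xi W^{{\rm ant},\star}_N(\omega,\xi) = \Phi(1-X_1,\ldots,1-X_n)$ yields both the equality of means and the variance bound~\eqref{eq:resu_der2}. I expect the monotonicity step to be the main obstacle: the chain has three competing sources of sign (the dependence of $\lambda_N$ on the $x_k$ through the constraint, the shift of $\psi(y,\omega,\lambda)$ across the minimizer $\xi=0$, and the direct dependence of ${\cal A}_2$), and keeping track of them requires carefully combining~\eqref{eq:struct-monotony-sec-der-sto}, \eqref{eq:structure-der22}, \eqref{eq:structure-der2} and~\eqref{hyp:0-min-sto} — this is exactly why the hypotheses of the proposition are stacked as they are. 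The remaining steps (explicit 1D corrector, envelope identity for $\partial_\xi W^\star_N = \lambda_N$, the harmonic-mean formula, and the covariance computation) are routine.
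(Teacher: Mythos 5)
Your overall strategy is exactly the paper's: in $d=1$ the corrector equation forces $\partial_\xi W(y,\omega,\xi+w')$ to be a constant $\lambda_N=\partial_\xi W^\star_N(\omega,\xi)$, the constraint $\int\psi(y,\omega,\lambda_N)\,dy=\xi$ determines $\lambda_N$, and differentiating it gives precisely your harmonic-mean formula for $\partial^2_\xi W^\star_N$ (this is~\eqref{eq:one-d-K'} in the paper). The reduction to ``show that $\partial^2_\xi W^\star_N$ is non-decreasing in each $X_k$, then invoke Corollary~\ref{cor:antithetic}'' is also the paper's. The problem is that the one step you yourself flag as ``the main obstacle'' --- the monotonicity --- is asserted (``the combined effect must be\dots'') rather than proved, and that step is the entire nontrivial content of the proposition.

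The paper closes it with Lemma~\ref{lem:mon-der2-one-d}, a discrete comparison of two densities $W_1\le W_2$ rather than a differentiation in $x_k$, and the key observation is that your ``three competing sources of sign'' are really only two: hypothesis~\eqref{eq:structure-der22} is deliberately placed on the \emph{composition} ${\cal A}_2(y,x,\zeta)=\partial^2_\xi{\cal A}\bigl(y,x,[\partial_\xi{\cal A}(y,x,\cdot)]^{-1}(\zeta)\bigr)$ at \emph{fixed} $\zeta$, so it already controls the net effect of the motion of $\psi$ and of the direct change of $\partial^2_\xi W$ when $x_k$ increases (this is assumption~\eqref{eq:W2_croiss} in the lemma). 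The only remaining effect is the shift of the evaluation point $\zeta=\lambda_N=\partial_\xi W^\star_N(\omega,\xi)$, which for $\xi>0$ is non-decreasing in $x_k$ by the first-derivative monotonicity, Lemma~\ref{lem:mon-der-one-d} --- this is why~\eqref{eq:structure-der2} sits among the hypotheses --- and the map $\zeta\mapsto\partial^2_\xi W\bigl(y,\omega,\psi(y,\omega,\zeta)\bigr)$ is itself non-decreasing for $\zeta>0$ because $\psi$ is increasing and positive there (using~\eqref{hyp:0-min-sto} and the fact that $\partial_\xi W^\star_N(\xi)$ has the sign of $\xi$) and $\partial^2_\xi W$ is non-decreasing on $\xi\ge0$ by~\eqref{eq:struct-monotony-sec-der-sto}. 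Chaining these two inequalities inside the harmonic-mean identity gives $\partial^2_\xi W_2^\star(\xi)\ge\partial^2_\xi W_1^\star(\xi)$. Your infinitesimal version would go through along the same decomposition, but as written the proposal stops short of the one verification that matters; the rest (equality of laws for the expectations, the covariance computation) is indeed routine.
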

The density $W(y,\omega,\xi) = a(y,\omega) |\xi|^p$, where $a$ is positive
and bounded away from zero and $p \geq 2$, typically satisfies the
assumption~\eqref{eq:struct-monotony-sec-der-sto}. 

\subsection{Classical results on antithetic variables}
\label{sec:antithetic}

We first recall the following lemma, and provide its proof for
consistency. This result is crucial for our proof of
variance reduction using the technique of antithetic variables,
performed in Section~\ref{sec:proof}. 

\begin{lemma}[\cite{liu}, page 27]
\label{lem:lapeyre}
Let $f$ and $g$ be two real-valued functions defined on~$\RR^n$, which
are non-decreasing with respect to each of their arguments. Consider 
$X = (X_1,\ldots,X_n)$
a vector of random variables, which are all independent from one another. Then
\begin{equation}
\label{eq:cov_neg}
\Cov(f(X),g(X)) \geq 0.
\end{equation}
\end{lemma}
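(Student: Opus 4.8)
\textbf{Proof strategy for Lemma~\ref{lem:lapeyre}.}
The plan is to prove the inequality $\Cov(f(X),g(X)) \geq 0$ by induction on the dimension $n$, using the classical ``doubling the variables'' trick together with a one-dimensional correlation inequality as the base case. First I would establish the case $n=1$: given an independent copy $\widetilde{X}_1$ of $X_1$, I would note that since $f$ and $g$ are both non-decreasing, the product $\bigl(f(X_1)-f(\widetilde{X}_1)\bigr)\bigl(g(X_1)-g(\widetilde{X}_1)\bigr)$ is almost surely nonnegative (the two factors always have the same sign). Taking expectations and expanding, using that $X_1$ and $\widetilde{X}_1$ have the same law and are independent, yields $2\,\EE[f(X_1)g(X_1)] - 2\,\EE[f(X_1)]\,\EE[g(X_1)] \geq 0$, which is exactly~\eqref{eq:cov_neg} for $n=1$.

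For the induction step, I would assume the result holds in dimension $n-1$ and write $X = (X', X_n)$ with $X' = (X_1,\ldots,X_{n-1})$. Conditioning on $X_n = x_n$, the functions $x' \mapsto f(x',x_n)$ and $x' \mapsto g(x',x_n)$ are non-decreasing in each of their $n-1$ arguments, so the induction hypothesis gives $\EE\bigl[f(X)g(X) \mid X_n\bigr] \geq F(X_n)\,G(X_n)$, where $F(x_n) := \EE[f(X',x_n)]$ and $G(x_n) := \EE[g(X',x_n)]$ (here I use that $X'$ is independent of $X_n$). Taking expectations over $X_n$ gives $\EE[f(X)g(X)] \geq \EE[F(X_n)G(X_n)]$. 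It then remains to observe that $F$ and $G$ are each non-decreasing functions of the single real variable $x_n$ — this follows from monotonicity of $f$ and $g$ in their last argument and linearity of the expectation — so the $n=1$ case applied to $F(X_n)$ and $G(X_n)$ yields $\EE[F(X_n)G(X_n)] \geq \EE[F(X_n)]\,\EE[G(X_n)] = \EE[f(X)]\,\EE[g(X)]$. Chaining the two inequalities completes the induction.

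The only genuine subtlety — and the step I would be most careful about — is the measurability and integrability bookkeeping: one should assume (or note it is harmless to assume, e.g. by truncation) that $f(X)$, $g(X)$ and their product are integrable so that all conditional expectations are well-defined, and one should check that $F$ and $G$ are Borel measurable (this is automatic here since, $X'$ being independent of $X_n$, $F(x_n)$ and $G(x_n)$ are genuine partial expectations, measurable in $x_n$ by Fubini). Apart from that, the argument is entirely elementary; no convexity or structure of the homogenization problem enters, only the monotonicity and independence hypotheses.
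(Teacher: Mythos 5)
Your proof is correct and follows essentially the same route as the paper: the identical ``doubling the variables'' argument for $n=1$, followed by the standard conditioning induction that the paper does not write out but instead delegates to the reference \cite{mprf}. Your explicit induction step (conditioning on $X_n$, applying the hypothesis to the conditional covariance, then the $n=1$ case to the non-decreasing marginals $F$ and $G$) is exactly the argument being cited, so there is nothing to correct.
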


\begin{proof}
This lemma is proved by induction. We treat the one-dimensional case
($n=1$) below, and we refer to~\cite[Proof of Lemma 2.1]{mprf} for the
induction. 
Consider $X$ and $Y$ two independent scalar random variables, identically
distributed. Both functions $f$ and $g$ are
non-decreasing, so
$$
(f(X) - f(Y)) \ (g(X) - g(Y)) \geq 0.
$$
We now take the expectation of the above inequality:
$$
\EE (f(X) \ g(X) ) + \EE (f(Y) \ g(Y) ) \geq 
\EE (f(Y) \ g(X)) + \EE (f(X) \ g(Y)).
$$
As $X$ and $Y$ share the same law, and are independent, this yields
$$
\EE (f(X) \ g(X) ) \geq \EE (f(X)) \ \EE(g(X)),
$$
and~\eqref{eq:cov_neg} follows for $n=1$. 
\end{proof}

The following result is a simple consequence of the above lemma (see
e.g.~\cite{mprf} for a proof).

\begin{corollary}[\cite{liu}]
\label{cor:antithetic}
Let $f$ be a function defined on $\RR^n$, which is non-decreasing
with respect to each of its arguments. Consider $X = (X_1,\ldots,X_n)$
a vector of random variables, which are all independent from one
another, and distributed according to the uniform law \ 
${\cal U}[0,1]$. Then
$$
\Var \left( \frac12 \left( f(X) + f(1-X) \right) \right) \leq 
\frac12 \Var \left( f(X) \right),
$$
where we denote $1-X = (1-X_1,\ldots,1-X_n) \in \RR^n$. 
\end{corollary}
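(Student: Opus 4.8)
The plan is to reduce the statement of Corollary~\ref{cor:antithetic} to Lemma~\ref{lem:lapeyre} by an elementary variance computation. First I would set $Y = 1-X = (1-X_1,\ldots,1-X_n)$ and observe that, since each $X_i$ is uniform on $[0,1]$, the vector $Y$ has exactly the same law as $X$; hence $f(Y)$ and $f(X)$ have the same distribution, and in particular the same expectation and the same variance. Next I would expand
\begin{equation}
\label{eq:var-expansion-cor}
\Var\left(\tfrac12\left(f(X)+f(Y)\right)\right)
= \tfrac14\Big(\Var(f(X)) + \Var(f(Y)) + 2\,\Cov(f(X),f(Y))\Big)
= \tfrac12\Var(f(X)) + \tfrac12\Cov(f(X),f(Y)),
\end{equation}
where I used $\Var(f(Y)) = \Var(f(X))$. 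So the desired inequality is equivalent to $\Cov(f(X),f(Y)) \leq 0$, i.e. $\Cov(f(X),f(1-X)) \leq 0$.

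Then I would establish this sign on the covariance as follows. Define $g:\RR^n\to\RR$ by $g(x_1,\ldots,x_n) = -f(1-x_1,\ldots,1-x_n)$. Since $f$ is non-decreasing in each argument, $x_i \mapsto f(\ldots,1-x_i,\ldots)$ is non-increasing in each argument, so $g$ is non-decreasing with respect to each of its arguments. Applying Lemma~\ref{lem:lapeyre} to the pair $(f,g)$ and the independent family $X = (X_1,\ldots,X_n)$ gives $\Cov(f(X),g(X)) \geq 0$, that is, $-\Cov\big(f(X),f(1-X)\big) \geq 0$, hence $\Cov(f(X),f(1-X)) \leq 0$. Plugging this into~\eqref{eq:var-expansion-cor} yields
$$
\Var\left(\tfrac12\left(f(X)+f(1-X)\right)\right) \leq \tfrac12\Var(f(X)),
$$
which is the claim.

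There is essentially no serious obstacle here: the only points requiring a little care are (i) checking that $1-X$ indeed has the same law as $X$, which is where the specific assumption ${\cal U}[0,1]$ enters (it would fail for a general symmetric law only up to an affine change, but is immediate for the uniform law on $[0,1]$ by symmetry about $1/2$), and (ii) correctly tracking the monotonicity bookkeeping when composing $f$ with the reflection $x\mapsto 1-x$ and the sign flip, so that Lemma~\ref{lem:lapeyre} applies to an honestly non-decreasing pair. Both are routine, so the proof is short; I would present it in the three steps above (law invariance, variance expansion, application of the covariance lemma via the reflected-and-negated $g$).
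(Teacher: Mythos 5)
Your proof is correct and follows essentially the same route as the paper: the same choice $g(x)=-f(1-x_1,\ldots,1-x_n)$ in Lemma~\ref{lem:lapeyre} to get $\Cov(f(X),f(1-X))\leq 0$, combined with the same variance expansion using $\Var(f(X))=\Var(f(1-X))$. No discrepancies to report.
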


\begin{proof}
Choosing $g(x_1,\ldots,x_n) = -f(1-x_1,\ldots,1-x_n)$ in 
Lemma~\ref{lem:lapeyre}, we obtain that
$$
\Cov(f(X),f(1-X)) = 
\Cov(f(X_1,\ldots,X_n),f(1-X_1,\ldots,1-X_n)) \leq 0.
$$
We next observe that  
\begin{eqnarray*}
\Var \left( \frac12 \left( f(X) + f(1-X) \right) \right)
&=&
\frac12 \Var(f(X)) + 
\frac12 \Cov \left( f(X),f(1-X) \right)
\\
&\leq&
\frac12 \Var(f(X)),
\end{eqnarray*}
where we have used that $\Var(f(X)) = \Var(f(1-X))$.
\end{proof}


\subsection{Derivatives of the corrector and of the homogenized energy
  density}
\label{sec:expression}

We now introduce the correctors as the solutions
to~\eqref{eq:correc-random_N}: 
$$
w^N(\cdot, \omega, \xi) := \text{arginf} \left\{ \int_{Q_N} W(\cdot,
  \omega, \xi + \nabla v), \quad v \in W^{1,p}_\# (Q_N), 
\quad \int_{Q_N} v = 0 \right\}.
$$
In this section, we derive some useful expressions for the derivatives
with respect to $\xi$ of $w^N$ and of $W^\star_N$.

The first order optimality condition in~\eqref{eq:correc-random_N} reads
\begin{equation}
\label{eq:optimality}
\forall h \in W^{1,p}_\# (Q_N), \quad 
\int_{Q_N} \left( \nabla h \right)^T \partial_\xi W \left(\cdot, \omega,
  \xi + \nabla w^N \right) = 0. 
\end{equation}
We deduce from that condition that 
\begin{equation}
\label{eq:Wstar-der-1}
\partial_\xi W^\star_N(\omega, \xi) 
= \frac{1}{|Q_N|} 
\int_{Q_N} \partial_\xi W\left(\cdot, \omega, \xi + \nabla w^N \right),
\end{equation}
and we note that we do not need to know $\partial_\xi w^N$ to compute
$\partial_\xi W^\star_N$. Computing the derivative of this equality with
respect to $\xi$, we obtain that
\begin{equation}
\label{eq:Wstar-der-2}
\partial^2_\xi W^\star_N(\omega, \xi) 
= \frac{1}{|Q_N|} 
\int_{Q_N} \left( \Id + \partial_\xi \nabla w^N \right) 
\partial^2_\xi W\left(\cdot, \omega, \xi + \nabla w^N \right)
\end{equation}
with the convention that 
$\dis \left[ \partial_\xi \nabla w^N \right]_{jk} = 
\frac{\partial^2 w^N}{\partial \xi_j \partial y_k}$
for $1 \leq j,k \leq d$. We can actually
obtain a somewhat more symmetric expression. Computing the derivative
of~\eqref{eq:optimality} with respect to $\xi$, we indeed see that
\begin{equation}
\label{eq:optimality_2}
\forall h \in W^{1,p}_\# (Q_N), \quad 
\int_{Q_N} \left( \Id + \partial_\xi \nabla w^N \right) 
\partial^2_\xi W\left(\cdot, \omega, \xi + \nabla w^N \right) \nabla h
= 0.
\end{equation}
We then infer from~\eqref{eq:Wstar-der-2} and~\eqref{eq:optimality_2}
that
\begin{equation}
\label{eq:second_derivative}
\partial_\xi^2 W^\star_N (\omega, \xi) 
= 
\frac{1}{|Q_N|} \int_{Q_N} \left( \Id + \partial_\xi \nabla w^N \right) 
\partial_\xi^2 W\left(\cdot, \omega, \xi+\nabla w^N\right) 
\left( \Id + \partial_\xi \nabla w^N \right)^T.
\end{equation}

\begin{remark}
Using the same kind of arguments, we see that the function 
$\dis g_j = \frac{\partial w^N}{\partial \xi_j} \in W^{1,p}_\# (Q_N)$ is
solution to the variational formulation
\begin{multline}
\label{eq:def_g_j}
\forall h \in W^{1,p}_\# (Q_N), \quad 
\int_{Q_N} \left( \nabla h \right)^T 
\partial^2_\xi W\left(\cdot, \omega, \xi + \nabla w^N \right)
\nabla g_j 
\\ =
- \sum_{i=1}^d \int_{Q_N} \frac{\partial h}{\partial y_i}
\frac{\partial^2 W}{\partial \xi_j \partial \xi_i}
\left(\cdot, \omega, \xi + \nabla w^N \right).
\end{multline}
Suppose that $W$ is $\alpha$-convex
(i.e. satisfies~\eqref{eq:hyp_alpha_convex}). Then
problem~\eqref{eq:def_g_j} is well-posed and allows to uniquely
determine (up to an additive constant) $g_j$, by solving 
a linear elliptic partial differential equation. 

Combined
with~\eqref{eq:second_derivative}, this remark provides a practical way
to compute $\partial_\xi^2 W^\star_N (\omega, \xi)$ without using any
finite difference approximation in $\xi$. 
\end{remark}
We finally note that, in view of~\eqref{eq:Wstar-der-1}, we have
\begin{equation}
\label{eq:first_axial}
\xi \cdot \partial_\xi W^\star_N(\omega, \xi) 
= \frac{1}{|Q_N|} 
\int_{Q_N} 
\xi \cdot \partial_\xi W\left(\cdot, \omega, \xi + \nabla w^N \right).
\end{equation}
Likewise, in view of~\eqref{eq:second_derivative}, we see that
\begin{multline}
\label{eq:second_axial}
\xi^T \partial_\xi^2 W^\star_N (\omega, \xi) \xi
= 
\\
\frac{1}{|Q_N|} \int_{Q_N} 
\left[ \xi + \nabla \left( \xi \cdot \partial_\xi w^N \right) \right]^T
\partial_\xi^2 W\left(\cdot, \omega, \xi+\nabla w^N\right) 
\left[ \xi + \nabla \left( \xi \cdot \partial_\xi w^N \right) \right].
\end{multline}

\subsection{Monotonicity properties}
\label{sec:monoton}

Our goal in this section is to establish monotonicity properties for the
homogenization process. Such properties are indeed useful to apply
Corollary~\ref{cor:antithetic} and therefore prove variance reduction.

To simplify the notation, we assume in this section that we are in a {\em periodic}
setting. For any $\xi \in \RR^d$, the function $y \mapsto W(y,\xi)$ is
supposed to be $Q$-periodic (with $Q=(0,1)^d$), to satisfy the growth
condition~\eqref{eq:Caratheodory} and to be strictly convex with respect to
$\xi$.
The associated homogenized energy density is then given by
\begin{equation}
\label{eq:champ-homog_per}
W^\star(\xi) = \inf \left\{ 
\int_Q W\left(y,\xi + \nabla w(y) \right) \, dy,
\quad w \in W^{1,p}_\#(Q), \quad \int_Q w = 0 \right\}.
\end{equation}

We first show a monotonicity property on the homogenized energy density
in Section~\ref{sec:monotony}. Next, restricting ourselves to the
one-dimensional setting, we show monotonicity properties for the first
and the second derivative of the homogenized energy density (see
respectively Sections~\ref{sec:monotony_1} and~\ref{sec:monotony_2}).

\subsubsection{On the homogenized energy density}
\label{sec:monotony}

The following result is an extension to the nonlinear setting of a
well-known result in the linear setting (see~\cite[page
12]{composites}). 

\begin{lemma}
\label{lem:monotony}
Suppose that the fields $W_1$ and $W_2$ satisfy
\begin{equation}
\label{eq:W_croiss}
\forall \xi \in \RR^d, \quad
W_2(y, \xi) \geq W_1(y, \xi) \text{ a.e. on $Q$}. 
\end{equation}
We denote $W_1^\star$ and $W_2^\star$ the corresponding homogenized
energy densities, defined by~\eqref{eq:champ-homog_per}. We then have
\begin{equation}
\label{eq:Wstar_croiss}
\forall \xi \in \RR^d, \quad W^\star_2(\xi) \geq W^\star_1(\xi).
\end{equation}
\end{lemma}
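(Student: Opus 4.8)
The plan is to prove $W_2^\star(\xi) \geq W_1^\star(\xi)$ by a direct comparison argument at the level of the energies, exploiting that the infimum defining $W^\star_i(\xi)$ is taken over the same admissible class $\{w \in W^{1,p}_\#(Q), \ \int_Q w = 0\}$. Fix $\xi \in \RR^d$. The key observation is that the infimum in~\eqref{eq:champ-homog_per} is attained: under the growth condition~\eqref{eq:Caratheodory} the functional $w \mapsto \int_Q W_1(y,\xi+\nabla w)\,dy$ is coercive on $W^{1,p}_\#(Q)$ (via the Poincaré--Wirtinger inequality, using $\int_Q w = 0$), and under strict convexity it is weakly lower semicontinuous and strictly convex, hence has a unique minimizer $w_1 \in W^{1,p}_\#(Q)$ with $\int_Q w_1 = 0$.

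First I would write, using this minimizer $w_1$ as a competitor for the $W_2^\star$ problem,
\begin{equation*}
W_2^\star(\xi) \leq \int_Q W_2(y, \xi + \nabla w_1(y))\, dy.
\end{equation*}
Next I would apply the pointwise inequality~\eqref{eq:W_croiss}, which gives $W_2(y,\xi+\nabla w_1(y)) \geq W_1(y,\xi+\nabla w_1(y))$ for a.e.\ $y \in Q$; integrating over $Q$ this yields
\begin{equation*}
\int_Q W_2(y, \xi + \nabla w_1(y))\, dy \geq \int_Q W_1(y, \xi + \nabla w_1(y))\, dy = W_1^\star(\xi),
\end{equation*}
where the last equality is precisely the statement that $w_1$ realizes the infimum defining $W_1^\star(\xi)$. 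Chaining these two inequalities gives $W_2^\star(\xi) \leq \int_Q W_2(y,\xi+\nabla w_1)\,dy$ — wait, that is the wrong direction; one must instead use the $W_2$-minimizer. So I would redo it: let $w_2$ be the minimizer for $W_2^\star(\xi)$, use it as a competitor in the $W_1^\star$ problem, obtaining $W_1^\star(\xi) \leq \int_Q W_1(y,\xi+\nabla w_2)\,dy \leq \int_Q W_2(y,\xi+\nabla w_2)\,dy = W_2^\star(\xi)$, where the first inequality is admissibility, the middle one is~\eqref{eq:W_croiss}, and the last is optimality of $w_2$. Since $\xi$ was arbitrary, this establishes~\eqref{eq:Wstar_croiss}.

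The main obstacle, such as it is, is the existence of the minimizers $w_1, w_2$: if one prefers to avoid invoking attainment, one can argue purely with infima by taking, for any $\delta > 0$, a near-optimal competitor $w_2^\delta$ for $W_2^\star(\xi)$ with $\int_Q W_2(y,\xi+\nabla w_2^\delta)\,dy \leq W_2^\star(\xi) + \delta$, then $W_1^\star(\xi) \leq \int_Q W_1(y,\xi+\nabla w_2^\delta)\,dy \leq \int_Q W_2(y,\xi+\nabla w_2^\delta)\,dy \leq W_2^\star(\xi)+\delta$, and let $\delta \to 0$. This version needs no regularity beyond~\eqref{eq:Caratheodory} and~\eqref{eq:W_croiss}, which is the cleanest route and the one I would present. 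Everything else is routine; the only point requiring a line of justification is that the admissible sets for the two problems coincide, which is immediate from~\eqref{eq:champ-homog_per}.
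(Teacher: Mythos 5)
Your final argument---bound $W_1^\star(\xi)$ by the $W_1$-energy of any (near-)optimal competitor for the $W_2$ problem, apply the pointwise inequality~\eqref{eq:W_croiss}, and pass to the infimum---is correct and is exactly the proof given in the paper, which simply writes $W^\star_1(\xi) \leq \int_Q W_1(y,\xi+\nabla v) \leq \int_Q W_2(y,\xi+\nabla v)$ for every admissible $v$ and takes the infimum over $v$. The detour through existence of exact minimizers (and the self-corrected wrong-direction attempt with $w_1$) is unnecessary, as you yourself note; the infimum version needs only~\eqref{eq:W_croiss} and the fact that the admissible classes coincide.
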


\begin{proof}
Fix $\xi \in \RR^d$. For any $v \in W^{1,p}_\#(Q)$ with $\dis \int_Q v =
0$, we have that
$$
W^\star_1(\xi)
\leq
\int_Q W_1 \left(y,\xi + \nabla v(y) \right) \, dy
\leq 
\int_Q W_2 \left(y,\xi + \nabla v(y) \right) \, dy.
$$
Taking the infimum over $v$, we obtain the claimed result. 
\end{proof}

\begin{remark}
\label{rem:mon-hom}

Consider the case of an energy density that is positively homogeneous of
degree $p$ with respect to its variable $\xi$, that is such that
$W(y,\lambda \xi) = |\lambda|^p \, W(y,\xi)$ for any $y \in \RR^d$, $\xi
\in \RR^d$ and $\lambda \in \RR$. A typical example is $\dis W(y,\xi) =
\frac{1}{p} a(y) |\xi|^p$. We then have, for any $y$ and $\xi$, that
\begin{equation}
\label{eq:utile}
\xi \cdot \partial_\xi W(y,\xi) = p W(y,\xi)
\quad \text{and} \quad 
\xi^T \partial^2_\xi W(y,\xi) \xi = p (p-1)W(y,\xi).
\end{equation} 
Using successively~\eqref{eq:first_axial},~\eqref{eq:optimality}
and~\eqref{eq:utile}, we obtain that
\begin{eqnarray}
\xi \cdot \partial_\xi W^\star(\xi) 
&=&  
\int_Q 
\xi \cdot \partial_\xi W\left(\cdot, \xi + \nabla w \right)
\nonumber
\\
&=&
\int_Q 
(\xi + \nabla w) \cdot \partial_\xi W\left(\cdot, \xi + \nabla w \right)
\nonumber
\\
&=&
p \int_Q W\left(\cdot, \xi + \nabla w \right)
\nonumber
\\
&=&
p W^\star(\xi),
\label{eq:Wstar-der-xi-1}
\end{eqnarray}
where $w$ is the corrector, solution to~\eqref{eq:champ-homog_per}. 

We next observe that, for any $\lambda \in \RR$, we have 
$w(\cdot,\lambda \xi) = \lambda w(\cdot,\xi)$. Thus, for any $y$, the
map $\xi \mapsto w(y,\xi)$ is homogeneous of degree one, and therefore
$\xi \cdot \partial_\xi w = w$. We thus infer from~\eqref{eq:second_axial},
using~\eqref{eq:utile}, that 
\begin{eqnarray}
\xi^T \partial_\xi^2 W^\star (\xi) \xi 
&=& 
\int_Q [\xi + \nabla w]^T \partial_\xi^2 W(\cdot, \xi+\nabla w) 
[\xi + \nabla w]
\nonumber
\\
&=&
p(p-1) \int_Q W(\cdot, \xi+\nabla w) 
\nonumber
\\
&=& p(p-1) W^\star(\xi).
\label{eq:sec-tr-simplified}
\end{eqnarray}
Consider now two fields $W_1$ and $W_2$ that are positively homogeneous of
degree $p$ with respect to the variable $\xi$ and
satisfy~\eqref{eq:W_croiss}. Then we deduce
from~\eqref{eq:Wstar_croiss},~\eqref{eq:Wstar-der-xi-1}
and~\eqref{eq:sec-tr-simplified} that, for all $\xi \in \RR^d$, 
$$
\xi \cdot \partial_\xi W_2^\star(\xi) 
\geq
\xi \cdot \partial_\xi W_1^\star(\xi) 
\quad \text{and} \quad
\xi^T \partial_\xi^2 W_2^\star (\xi) \xi
\geq
\xi^T \partial_\xi^2 W_1^\star (\xi) \xi.
$$
\end{remark}

\subsubsection{On the first derivative of the homogenized energy density}
\label{sec:monotony_1}

We now establish a monotonicity result on the derivative of
$W^\star(\xi)$, in the one-dimensional setting. 

As in Section~\ref{sec:1} (see~\eqref{hyp:0-min-sto}), we consider
energy densities $W$ such that
\begin{equation}
\label{hyp:0-min}
\text{$\xi \mapsto W(y,\xi)$ attains its minimum at $\xi = 0$ for almost
  all $y \in Q$.} 
\end{equation}

\begin{lemma}
\label{lem:mon-der-one-d}
Let $d=1$, and consider two energy densities $W_1$ and $W_2$
satisfying~\eqref{hyp:0-min}, and such that
\begin{equation}
\label{eq:W1_croiss}
\forall \xi \in \RR, \quad
\xi \partial_\xi W_2(y,\xi) \geq \xi \partial_\xi W_1(y,\xi)
\text{ a.e. on $(0,1)$}. 
\end{equation}
We denote $W_1^\star$ and $W_2^\star$ the corresponding homogenized
energy densities, defined by~\eqref{eq:champ-homog_per}. We then have
\begin{equation}
\label{eq:Wstar1_croiss}
\forall \xi \in \RR, \quad 
\xi \partial_\xi W_2^\star(\xi) \geq \xi \partial_\xi W_1^\star(\xi).
\end{equation}
\end{lemma}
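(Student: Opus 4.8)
The strategy is to reduce the one-dimensional statement to an explicit computation using the structure of the 1D corrector problem, and then to exploit the monotonicity hypothesis~\eqref{eq:W1_croiss} pointwise. First I would recall that in dimension $d=1$ the corrector problem~\eqref{eq:champ-homog_per} can be solved explicitly: for a given $\xi$, the Euler--Lagrange equation~\eqref{eq:optimality} says that $\partial_\xi W(y, \xi + \partial_y w(y))$ is constant in $y$ (it has no $y$-derivative since test functions are arbitrary periodic functions), call this constant $\lambda(\xi,\omega)$ or simply $\lambda$. Hence $\xi + \partial_y w(y) = \psi(y, \lambda)$, where $\psi(y,\cdot)$ is the reciprocal of the increasing map $\zeta \mapsto \partial_\xi W(y,\zeta)$ (the same $\psi$ introduced in Section~\ref{sec:2}), and the constraint $\int_Q \partial_y w = 0$ forces $\int_0^1 \psi(y,\lambda)\, dy = \xi$. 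This determines $\lambda = \lambda(\xi)$ implicitly and monotonically in $\xi$.

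Next I would use formula~\eqref{eq:first_axial}: since $\partial_\xi W(y, \xi + \partial_y w) = \lambda$ is $y$-independent,
\begin{equation}
\label{eq:xi-lambda}
\xi \, \partial_\xi W^\star(\xi) = \int_0^1 \xi \, \partial_\xi W\bigl(y, \xi + \partial_y w\bigr)\, dy = \xi \, \lambda(\xi).
\end{equation}
So the claim~\eqref{eq:Wstar1_croiss} becomes $\xi \, \lambda_2(\xi) \geq \xi \, \lambda_1(\xi)$, where $\lambda_i(\xi)$ is the Lagrange multiplier attached to $W_i$. I would then treat $\xi > 0$ and $\xi < 0$ separately (the case $\xi = 0$ being trivial, and here hypothesis~\eqref{hyp:0-min} enters: it guarantees $\partial_\xi W_i(y,0) = 0$, so $\lambda_i(0) = 0$ and the sign of $\lambda_i(\xi)$ matches that of $\xi$). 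For $\xi > 0$: hypothesis~\eqref{eq:W1_croiss} on $(0,1)$ reads $\partial_\xi W_2(y,\xi) \geq \partial_\xi W_1(y,\xi)$ for $\xi > 0$. I want to conclude $\lambda_2(\xi) \geq \lambda_1(\xi) \geq 0$. Set $\lambda = \lambda_1(\xi)$; then $\int_0^1 \psi_1(y,\lambda)\, dy = \xi$, where $\psi_1(y,\lambda)$ solves $\partial_\xi W_1(y, \psi_1) = \lambda$. Since $\partial_\xi W_2(y,\cdot) \geq \partial_\xi W_1(y,\cdot)$ on the positive axis and both are increasing with the same root at $0$, we get $\psi_2(y,\lambda) \leq \psi_1(y,\lambda)$ for $\lambda \geq 0$ (the larger function reaches the value $\lambda$ sooner). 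Hence $\int_0^1 \psi_2(y,\lambda)\, dy \leq \xi = \int_0^1 \psi_2(y,\lambda_2(\xi))\, dy$, and since $\lambda \mapsto \int_0^1 \psi_2(y,\lambda)\, dy$ is increasing, $\lambda \leq \lambda_2(\xi)$, i.e. $\lambda_1(\xi) \leq \lambda_2(\xi)$, which gives $\xi\lambda_1(\xi) \leq \xi\lambda_2(\xi)$. The case $\xi < 0$ is symmetric, using~\eqref{eq:W1_croiss} on $(0,1)$ again but now with $\xi < 0$, so that both $\lambda_i(\xi) \leq 0$ and the inequalities reverse consistently, yielding once more $\xi\lambda_1(\xi) \leq \xi\lambda_2(\xi)$.

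\textbf{Main obstacle.} The delicate point is the careful bookkeeping of signs: hypothesis~\eqref{eq:W1_croiss} is stated as $\xi\,\partial_\xi W_2 \geq \xi\,\partial_\xi W_1$ (a single inequality valid for all $\xi \in \RR$), which unpacks to $\partial_\xi W_2(y,\xi) \geq \partial_\xi W_1(y,\xi)$ when $\xi > 0$ but to the \emph{reverse} inequality when $\xi < 0$; one must check that this sign flip is exactly compensated by the sign flip of $\lambda_i(\xi)$ and of the monotonicity comparison of the $\psi_i$'s, so that the final conclusion $\xi\,\partial_\xi W_2^\star(\xi) \geq \xi\,\partial_\xi W_1^\star(\xi)$ holds with the correct (single) sign for all $\xi$. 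A secondary technical point is justifying the explicit 1D representation of the corrector and the differentiability/monotonicity of $\xi \mapsto \lambda(\xi)$; this follows from strict convexity~\eqref{eq:hyp_strict_convex} and the growth bounds~\eqref{eq:Caratheodory} together with the implicit function theorem applied to $\int_0^1 \psi(y,\lambda)\,dy = \xi$, and I would state it as a short preliminary observation rather than dwell on it. An alternative, perhaps cleaner route avoiding the multiplier analysis would be to go back to the variational characterization: write $\xi\,\partial_\xi W^\star(\xi)$ via~\eqref{eq:first_axial}, approximate $\xi\,\partial_\xi W_i(y,\cdot)$ suitably, and mimic the infimum-comparison argument of Lemma~\ref{lem:monotony}; but because $\xi\,\partial_\xi W$ is not itself the integrand of a minimization, this seems to require the explicit solution anyway, so I would commit to the multiplier approach above.
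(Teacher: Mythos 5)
Your proposal is correct and follows essentially the same route as the paper: both exploit the 1D identity $\partial_\xi W(y,\xi+w')=\partial_\xi W^\star(\xi)$ to get $\xi=\int_0^1\psi\left(y,\partial_\xi W^\star(\xi)\right)dy$, establish the sign of $\partial_\xi W^\star(\xi)$ from~\eqref{hyp:0-min}, recast~\eqref{eq:W1_croiss} as a comparison of the reciprocal functions $\psi_i$, and conclude by monotonicity of $\lambda\mapsto\int_0^1\psi_2(y,\lambda)\,dy$. The paper phrases the last step as a proof by contradiction rather than your direct comparison, but the content is identical.
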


\begin{proof}
We first claim that 
\begin{equation}
\label{eq:claim1}
\text{
$\partial_\xi W^\star(\xi)$ has the same sign as $\xi$.}
\end{equation}
To prove this, we note that the corrector equation reads
(see~\eqref{eq:optimality}) 
$$
\frac{d}{dy} \left[ \partial_\xi 
W \left(y, \xi + \frac{dw}{dy}(y,\xi) \right) \right] =
0 \quad \text{on $(0,1)$}, \quad w(\cdot,\xi) \text{ is 1-periodic}.
$$
We therefore see that 
$\dis \partial_\xi W \left(y, \xi + \frac{dw}{dy}(y,\xi)\right)$ is
independent of $y$, and using~\eqref{eq:Wstar-der-1}, we obtain that
$$
\partial_\xi W \left(y, \xi + \frac{dw}{dy}(y,\xi)\right) = 
\partial_\xi W^\star(\xi)
\quad \text{on $(0,1)$}.
$$
Let $\xi \mapsto \psi(y,\xi)$ be the reciprocal function of 
$\xi \mapsto \partial_\xi W(y,\xi)$,
which exists and is increasing thanks to the
strict convexity of $\xi \mapsto W(y, \xi)$.
We deduce from the above equation,
after integration over $(0,1)$, that
\begin{equation}
\label{eq:one-d-K}
\xi = \int_0^1 \psi\left(y, \partial_\xi W^\star(\xi) \right) dy.
\end{equation}
We are now in position to prove~\eqref{eq:claim1}.
Indeed, we first note that~\eqref{hyp:0-min}, that reads
$\partial_\xi W(y,\xi=0) = 0$, implies that $\psi(y,0) = 0$. If
$\partial_\xi W^\star(\xi) \geq 0$, then $\psi(y,\partial_\xi
W^\star(\xi) \geq \psi(y,0) = 0$, hence, integrating over $(0,1)$ and
using~\eqref{eq:one-d-K}, we obtain $\xi \geq 0$. Likewise, 
$\partial_\xi W^\star(\xi) \leq 0$ implies that $\xi \leq 0$. The
claim~\eqref{eq:claim1} is proved. 

\medskip

To proceed, we see that the assumption~\eqref{eq:W1_croiss} equivalently
reads, using the reciprocal functions, 
\begin{equation}
\label{eq:hyp_equiv}
\forall \zeta \in \RR, \quad
\zeta \psi_2(y,\zeta) \leq \zeta \psi_1(y,\zeta)
\quad \text{a.e. on $(0,1)$}.
\end{equation}
We now prove~\eqref{eq:Wstar1_croiss} by contradiction. Assume that 
$\xi \partial_\xi W_2^\star(\xi) < \xi \partial_\xi W_1^\star(\xi)$ for
some $\xi \in \RR$. Without loss of generality, we can assume that $\xi
> 0$, and therefore $\partial_\xi W_2^\star(\xi) < \partial_\xi
W_1^\star(\xi)$. Using~\eqref{eq:claim1}, we additionally have 
$0 < \partial_\xi W_2^\star(\xi)$.
Using that $\zeta \mapsto \psi_2(y,\zeta)$ is
increasing and~\eqref{eq:hyp_equiv} with $\zeta = \partial_\xi
W_1^\star(\xi) > 0$, we have
$$
\psi_2\left( y, \partial_\xi W_2^\star(\xi) \right) 
< 
\psi_2\left( y, \partial_\xi W_1^\star(\xi) \right) 
\leq 
\psi_1\left( y, \partial_\xi W_1^\star(\xi) \right).
$$
Integrating over $(0,1)$ and using~\eqref{eq:one-d-K} yields
$$
\xi 
= 
\int_0^1 \psi_2 \left( y, \partial_\xi W_2^\star(\xi) \right) dy 
< 
\int_0^1 \psi_1 \left( y, \partial_\xi W_1^\star(\xi) \right) dy 
= 
\xi,
$$
and we reach a contradiction. This concludes the proof.
\end{proof}

\subsubsection{On the second derivative of the homogenized energy density}
\label{sec:monotony_2}

We next turn to monotonicity properties of the second derivative of the
homogenized energy density. As in Section~\ref{sec:monotony_1}, we
consider energy densities satisfying~\eqref{hyp:0-min}. We additionally
request that, almost everywhere in $(0,1)$,
\begin{equation}
\label{eq:struct-monotony-sec-der}
\begin{array}{c}
\xi \mapsto \partial^2_\xi W \left( y,\xi \right) \text{is non
  decreasing for $\xi \geq 0$}
\\
\text{and non increasing for $\xi \leq 0$.}
\end{array}
\end{equation}

\begin{lemma}
\label{lem:mon-der2-one-d}
Let $d=1$, and consider two energy densities $W_1$ and $W_2$
satisfying~\eqref{hyp:0-min},~\eqref{eq:W1_croiss},~\eqref{eq:struct-monotony-sec-der}
and such that 
\begin{equation}
\label{eq:W2_croiss}
\forall \zeta \in \RR, \quad
\partial^2_\xi W_2 \left( y, \psi_2(y,\zeta) \right) 
\geq  
\partial^2_\xi W_1 \left( y, \psi_1(y,\zeta) \right)
\text{ a.e. on $(0,1)$}. 
\end{equation}
We denote $W_1^\star$ and $W_2^\star$ the corresponding homogenized
energy densities, defined by~\eqref{eq:champ-homog_per}. We then have
\begin{equation}
\label{eq:Wstar2_croiss}
\forall \xi \in \RR, \quad 
\partial^2_\xi W_2^\star(\xi) \geq \partial^2_\xi W_1^\star(\xi).
\end{equation}
\end{lemma}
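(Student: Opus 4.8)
The plan is to mimic the one-dimensional argument used for Lemma~\ref{lem:mon-der-one-d}, but applied now to the second derivative. The starting point is the identity obtained by differentiating~\eqref{eq:one-d-K} with respect to $\xi$. Recall from the proof of Lemma~\ref{lem:mon-der-one-d} that in the periodic one-dimensional setting the corrector equation forces $\partial_\xi W(y,\xi+w'(y,\xi))$ to be independent of $y$ and equal to $\partial_\xi W^\star(\xi)$, whence $\displaystyle \xi = \int_0^1 \psi(y,\partial_\xi W^\star(\xi))\,dy$. Differentiating this in $\xi$ gives
\begin{equation}
\label{eq:plan-diff}
1 = \partial^2_\xi W^\star(\xi) \int_0^1 \partial_\zeta \psi\bigl(y,\partial_\xi W^\star(\xi)\bigr)\,dy ,
\end{equation}
so that, using $\partial_\zeta\psi(y,\zeta) = 1/\partial^2_\xi W(y,\psi(y,\zeta))$ (the derivative of a reciprocal function),
\begin{equation}
\label{eq:plan-harm}
\frac{1}{\partial^2_\xi W^\star(\xi)} = \int_0^1 \frac{dy}{\partial^2_\xi W\bigl(y,\psi(y,\partial_\xi W^\star(\xi))\bigr)} .
\end{equation}
This is the familiar ``harmonic mean'' formula for the homogenized coefficient, now written along the corrector.

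With~\eqref{eq:plan-harm} in hand for both $W_1$ and $W_2$, the strategy is again a contradiction argument exactly parallel to the one for the first derivative. First I would fix $\xi$; by symmetry (both densities are even-ish around $0$ via~\eqref{hyp:0-min}) it suffices to treat $\xi>0$, and then by~\eqref{eq:claim1} applied to each $W_i$ we have $\zeta_i := \partial_\xi W_i^\star(\xi) > 0$. Lemma~\ref{lem:mon-der-one-d} (whose hypotheses~\eqref{hyp:0-min} and~\eqref{eq:W1_croiss} are assumed here) gives $\zeta_2 = \partial_\xi W_2^\star(\xi) \geq \partial_\xi W_1^\star(\xi) = \zeta_1 > 0$. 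Now suppose for contradiction that $\partial^2_\xi W_2^\star(\xi) < \partial^2_\xi W_1^\star(\xi)$; by~\eqref{eq:plan-harm} this is equivalent to
\begin{equation}
\label{eq:plan-contra}
\int_0^1 \frac{dy}{\partial^2_\xi W_2\bigl(y,\psi_2(y,\zeta_2)\bigr)} > \int_0^1 \frac{dy}{\partial^2_\xi W_1\bigl(y,\psi_1(y,\zeta_1)\bigr)} .
\end{equation}
To contradict this I would show the pointwise inequality $\partial^2_\xi W_2(y,\psi_2(y,\zeta_2)) \geq \partial^2_\xi W_1(y,\psi_1(y,\zeta_1))$ a.e., which makes the left integrand in~\eqref{eq:plan-contra} no larger than the right one, contradicting the strict inequality.

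The pointwise inequality splits into two moves. From~\eqref{eq:W2_croiss} evaluated at $\zeta = \zeta_1 > 0$ we get $\partial^2_\xi W_2(y,\psi_2(y,\zeta_1)) \geq \partial^2_\xi W_1(y,\psi_1(y,\zeta_1))$. It then remains to upgrade $\psi_2(y,\zeta_1)$ to $\psi_2(y,\zeta_2)$: since $\zeta_2 \geq \zeta_1 > 0$ and $\zeta\mapsto\psi_2(y,\zeta)$ is increasing, we have $\psi_2(y,\zeta_2) \geq \psi_2(y,\zeta_1) \geq \psi_2(y,0) = 0$ (using $\psi_2(y,0)=0$, which follows from~\eqref{hyp:0-min}); hence both arguments lie in $\{\xi\ge 0\}$, and by the monotonicity hypothesis~\eqref{eq:struct-monotony-sec-der} on $W_2$, $\xi\mapsto\partial^2_\xi W_2(y,\xi)$ is non-decreasing there, so $\partial^2_\xi W_2(y,\psi_2(y,\zeta_2)) \geq \partial^2_\xi W_2(y,\psi_2(y,\zeta_1))$. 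Chaining the two gives the desired a.e. bound and closes the contradiction. I expect the only genuinely delicate point to be the justification of the differentiation step~\eqref{eq:plan-diff}--\eqref{eq:plan-harm}, i.e.\ checking enough regularity of $\xi\mapsto\partial_\xi W^\star(\xi)$ and of $\psi$ (guaranteed by the $C^3$ assumption on $W$ and strict convexity, with $\alpha$-convexity ensuring $\partial^2_\xi W$ is bounded away from zero so the reciprocals are well-behaved) to differentiate under the integral sign in~\eqref{eq:one-d-K}; everything else is the same monotonicity bookkeeping as in Lemma~\ref{lem:mon-der-one-d}.
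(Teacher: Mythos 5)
Your proposal is correct and follows essentially the same route as the paper: differentiate \eqref{eq:one-d-K} to get the harmonic-mean formula \eqref{eq:one-d-K'}, reduce to $\xi>0$, and chain \eqref{eq:W2_croiss} with the monotonicity coming from Lemma~\ref{lem:mon-der-one-d} and \eqref{eq:struct-monotony-sec-der}. The only cosmetic differences are that you pass through the intermediate term $\partial^2_\xi W_2(y,\psi_2(y,\zeta_1))$ (applying \eqref{eq:W2_croiss} at $\zeta_1$ and then the monotonicity of $W_2$) whereas the paper passes through $\partial^2_\xi W_1(y,\psi_1(y,\zeta_2))$, and that you phrase the final step as a contradiction where a direct comparison of the two integrals suffices.
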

We recall that $\zeta \mapsto \psi(y,\zeta)$ is the reciprocal function
of $\xi \mapsto \partial_\xi W(y,\xi)$.

\begin{proof}
We first compute the derivative of~\eqref{eq:one-d-K} and obtain
\begin{equation}
\label{eq:one-d-K'}
\frac{1}{\partial^2_\xi W^\star(\xi)} 
= 
\int_0^1 \frac{dy}{\partial^2_\xi 
W \left[y, \psi\left(y,\partial_\xi W^\star(\xi) \right) \right]}.
\end{equation}
It is sufficient to prove~\eqref{eq:Wstar2_croiss} for $\xi >
0$. Using~\eqref{eq:Wstar1_croiss} and the fact that $\psi_1$ and
$\partial^2_\xi W_1$ are non-decreasing with respect to their second
argument, we have
$$
\partial^2_\xi W_1 \left( y, \psi_1\left(y, \partial_\xi W_1^\star(\xi)
  \right) \right) 
\leq 
\partial^2_\xi W_1 \left( y, \psi_1\left(y, \partial_\xi W_2^\star(\xi)
  \right) \right).
$$
Using~\eqref{eq:W2_croiss} for $\zeta = \partial_\xi W_2^\star(\xi)$, we
deduce that
$$
\partial^2_\xi W_1 \left( y, \psi_1\left(y, \partial_\xi W_1^\star(\xi)
  \right) \right) 
\leq
\partial^2_\xi W_2 \left( y, \psi_2\left(y, \partial_\xi W_2^\star(\xi)
  \right) \right).
$$
In view of~\eqref{eq:one-d-K'}, this inequality readily
implies~\eqref{eq:Wstar2_croiss} for $\xi > 0$. This concludes the
proof. 
\end{proof}

\subsection{Proof of
  Propositions~\ref{prop:strong},~\ref{prop:var-red-der}
  and~\ref{prop:var-red-der2}}
\label{sec:proof}

Now that we have collected all the necessary ingredients, we are in
position to prove our main results. 

\subsubsection{Variance reduction on the homogenized energy density}

\begin{proof}[Proof of Proposition~\ref{prop:strong}]
As $1-X_k(\omega)$ and $X_k(\omega)$ share the same law, so do the
fields $W$ and $W^{\text{ant}}$ on $Q_N$. Hence, the homogenized fields
$W^\star_N(\omega,\xi)$ and $W^{{\rm ant},\star}_N(\omega, \xi)$ share
the same law, and we obtain the first assertion of~\eqref{eq:resu0_1}. 

We now choose a vector $\xi \in \RR^d$, and denote by $\mathcal{P}^\xi_N$ 
the operator that associates to a given $Q_N$-periodic energy density
the homogenized energy density evaluted at $\xi$. We see
from~\eqref{eq:correc-random_N} that $W^\star_N(\omega,\xi)$ is the
effective energy density (evaluated at $\xi$) obtained by periodic
homogenization of $W_{| y \in Q_N}$:
\begin{equation}
\label{eq:def_PNxi}
W^\star_N(\omega,\xi) = 
\mathcal{P}^\xi_N \left[ W(\cdot,\omega,\cdot)_{| y \in Q_N} \right]
\quad \text{a.s.}
\end{equation}
Using the function ${\cal A}$ of~\eqref{eq:structure1}, we
introduce the map
\begin{eqnarray*}
f : \RR^n & \to & \RR
\\
x & \mapsto & 
\mathcal{P}^\xi_N \left[ {\cal A}(\cdot, x,\cdot) \right],
\end{eqnarray*}
see that $f(X(\omega)) = W^\star_N(\omega,\xi)$ and that, using the
definition~\eqref{eq:def_a_tilde} of
$\widetilde{W}^\star_N(\omega,\xi)$, we have
\begin{equation}
\label{eq:titi2}
\frac12 \left( f(X(\omega)) + f(1-X(\omega)) \right) 
= 
\frac12 \left( W^\star_N(\omega,\xi) + W^{{\rm ant},\star}_N(\omega,\xi) \right)
=
\widetilde{W}^\star_N(\omega,\xi).
\end{equation}

We now infer from Assumption~\eqref{eq:structure2} that, for any
$y \in Q_N$ and any $\zeta \in \RR^d$, the function ${\cal A}(y, \cdot,
\zeta)$ is non-decreasing with respect to each of its arguments. In view
of Lemma~\ref{lem:monotony}, we obtain that $f$ is non-decreasing. 

\medskip

We are thus in position to use Corollary~\ref{cor:antithetic}, which
yields
$$
\Var \left( \frac12 \left( f(X) + f(1-X) \right) \right) \leq 
\frac12 \Var \left( f(X) \right).
$$
Using~\eqref{eq:titi2}, we obtain
$$
\Var \left( \widetilde{W}^\star_N(\cdot,\xi) \right)
\! = \!
\Var \left[ \frac12 \left( f(X) + f(1-X) \right) \right]
\! \leq \!
\frac12 \Var \left( f(X) \right)
\! = \! 
\frac12 \Var \left( W^\star_N(\cdot,\xi) \right),
$$
which concludes the proof of the second assertion of~\eqref{eq:resu0_1}
and of the Proposition~\ref{prop:strong}.
\end{proof}

\begin{remark}
\label{rem:mon-hom2}
Following Remark~\ref{rem:mon-hom}, consider a positively
homogeneous energy density $W$. We have shown there that $\xi \cdot
\partial_\xi W_N^\star(\omega,\xi)$ and $\xi^T \partial^2_\xi
W_N^\star(\omega,\xi) \xi$ are equal (up to a multiplicative constant) to
$W_N^\star(\omega,\xi)$. Thus, under
Assumptions~\eqref{eq:structure1}--\eqref{eq:structure2}, variance
reduction holds for these two outputs as well. 
\end{remark}

\subsubsection{Variance reduction on the first derivative of the homogenized
  energy density}

\begin{proof}[Proof of Proposition~\ref{prop:var-red-der}]
The proof follows the same lines as that of
Proposition~\ref{prop:strong}. 

As $1-X_k(\omega)$ and $X_k(\omega)$ share the same law, so do the
fields $W$ and $W^{\rm ant}$ on $Q_N$. Hence, the quantities
$\xi \partial_\xi W^\star_N(\omega, \xi)$ and $\xi \partial_\xi W^{{\rm
    ant},\star}_N(\omega, \xi)$ share the same law, which implies the
first assertion of~\eqref{eq:resu_der}.

To prove the second assertion, we again make use, as in the proof of
Proposition~\ref{prop:strong}, of the operator ${\mathcal P}^\xi_N$ 
that associates to a given $Q_N$-periodic energy density
the homogenized energy density evaluted at
$\xi$ (here, $Q_N = (-N,N)$). Expression~\eqref{eq:def_PNxi} holds. 
Choosing a vector $\xi \in \RR$, we introduce the function
\begin{eqnarray*}
f : \RR^n & \to & \RR
\\
x & \mapsto & 
\xi \partial_\xi \left[ {\mathcal P}^\xi_N \left({\cal A}(\cdot,
    x,\cdot)\right) \right],
\end{eqnarray*}
which obviously satisfies $f(X(\omega)) = 
\xi \partial_\xi W^\star_N(\omega,\xi)$.
Using the definition~\eqref{eq:def-tilde-der1} of
$\widetilde{\xi \partial_\xi W^\star_N}$, we have 
\begin{equation}
\label{eq:der1-proba}
\frac12 \left[ f(X(\omega)) + f(1-X(\omega)) \right] 
= 
\frac12 \left[ \xi  \partial_\xi W^\star_N(\omega,\xi) 
+ 
\xi \partial_\xi W^{{\rm ant},\star}_N(\omega,\xi) \right]
=
\widetilde{\xi  \partial_\xi W^*_N}(\omega,\xi).
\end{equation}

We now infer from Assumption~\eqref{eq:structure-der2} that, for any
$y \in (-N,N)$ and any $\zeta \in \RR$, the function ${\cal A}_1(y,
\cdot, \zeta)$ is non-decreasing with respect to each of its
arguments. In view of Lemma~\ref{lem:mon-der-one-d}, we thus obtain that
$f$ is non-decreasing. 

\medskip

Using Corollary~\ref{cor:antithetic}, we write that $\dis
\Var \left( \frac12 \left( f(X) + f(1-X) \right) \right) \leq 
\frac12 \Var \left( f(X) \right)$.
In view of~\eqref{eq:der1-proba}, we recast this inequality as
$$
\Var \left[ \widetilde{\xi \partial_\xi W^\star_N}(\cdot,\xi) \right]
\leq
\frac12 \Var \left( \xi \partial_\xi W^\star_N(\cdot,\xi) \right),
$$
and therefore obtain the second assertion of~\eqref{eq:resu_der}. This
concludes the proof of Proposition~\ref{prop:var-red-der}.
\end{proof}

\subsubsection{Variance reduction on the second derivative of the
  homogenized energy density}

\begin{proof}[Proof of Proposition~\ref{prop:var-red-der2}]

The proof follows the same lines as the proof of
Proposition~\ref{prop:var-red-der}. Using
Assumptions~\eqref{eq:structure-der2} and~\eqref{eq:structure-der22}, we
see that Assumptions~\eqref{eq:W1_croiss} and~\eqref{eq:W2_croiss} of
Lemma~\ref{lem:mon-der2-one-d} are satisfied. The monotonicity result of
Lemma~\ref{lem:mon-der2-one-d} next allows to use
Corollary~\ref{cor:antithetic}, which implies~\eqref{eq:resu_der2}.
\end{proof}

\subsection{Examples satisfying our structure assumptions}
\label{sec:examples}

Before proceeding to the numerical tests, we give here some specific
examples of fields $W$ that satisfy the above assumptions. We consider
the case
\begin{equation}
\label{eq:test-case}
W(y,\omega,\xi) = 
a(y, \omega) \frac{|\xi|^p}{p} + c(y, \omega) \frac{|\xi|^2}{2},
\quad p \geq 2,
\end{equation}
with $c(y,\omega) \geq 0$ and $a(y, \omega) \geq a_- > 0$ a.e. and a.s.,
and provide sufficient conditions on the scalar fields $a$ and $c$ for
the structure
assumptions~\eqref{eq:structure1},~\eqref{eq:structure2},~\eqref{eq:structure-der2}
and~\eqref{eq:structure-der22} to be satisfied.
Note that~\eqref{hyp:0-min-sto}
and~\eqref{eq:struct-monotony-sec-der-sto} are already fullfilled.

Consider two families 
$\left( a_k(\omega) \right)_{k \in \mathbb{Z}^d}$ and 
$\left( c_k(\omega)\right)_{k \in \mathbb{Z}^d}$ of
independent, identically distributed random variables, and assume that
\begin{equation}
\label{eq:random-form1}
a(y,\omega) = \sum_{k \in \mathbb{Z}^d} \mathbf{1}_{Q + k}(y)
a_k(\omega), 
\ \ 
c(y,\omega) = \sum_{k \in \mathbb{Z}^d} \mathbf{1}_{Q + k}(y) c_k(\omega),
\end{equation}
where $Q=(0,1)^d$ and $Q+k$ is the cube $Q$ translated by the 
vector $k \in \mathbb{Z}^d$. The scalar field $a(y,\omega)$ is therefore
constant in each cube $Q+k$ with i.i.d. values $a_k(\omega)$, and likewise
for $c(y,\omega)$. 

We assume that there exist $\alpha > 0$ and
$\beta < \infty$ such that, for all $k \in \ZZ^d$, 
$0 < \alpha \leq a_k(\omega) \leq \beta < +\infty$ and 
$0 \leq c_k(\omega) \leq \beta < +\infty$ almost surely. Consequently,~\eqref{eq:Caratheodory} holds. 

Introduce now the cumulative distribution functions $\dps P_a(x) = 
\nu_a(-\infty,x)$, where $\nu_a$ is the common probability measure of all the
$a_k$, and next the non-decreasing functions $f_a(x) = \inf\{z; P_a(x)
\geq z\}$. Then, for any random variable $X^a(\omega)$ uniformly
distributed in $[0,1]$, the random variable $f_a(X^a(\omega))$ is
distributed according to the measure $\nu_a$. As a consequence, we can
recast~\eqref{eq:random-form1} in the form 
$$
a(y,\omega) = \sum_{k \in \mathbb{Z}^d} 
\mathbf{1}_{Q + k}(y) f_a(X^a_k(\omega)), 
$$
where $\left( X^a_k(\omega)\right)_{k \in \mathbb{Z}^d}$ is a family of
independent random variables that are all uniformly distributed in
$[0,1]$, and $f_a$ is non-decreasing. We can proceed likewise for the
variables $c_k$. This yields an example
where~\eqref{eq:structure1},~\eqref{eq:structure2}
and~\eqref{eq:structure-der2} hold. In particular, the function ${\cal A}$
of~\eqref{eq:structure1} reads
$$
{\cal A}(y,x_a,x_c,\xi) = 
\frac{|\xi|^p}{p} \sum_{k \in I_N} \mathbf{1}_{Q + k}(y) f_a(x^a_k) 
+ 
\frac{|\xi|^2}{2}
\sum_{k \in I_N} \mathbf{1}_{Q + k}(y) f_c(x^c_k), 
$$
where $I_N = \left\{ k \in \ZZ^d \text{ s.t. } Q + k \subset Q_N
\right\}$ and $x_a = \left\{ x^a_k \right\}_{k \in I_N}$.
As shown in~\cite{mprf}, more
general fields $a(y, \omega)$ (where random variables may be correlated)
also fall into this framework.

\medskip

In what follows, we prove that, under assumptions~\eqref{eq:test-case}
and~\eqref{eq:random-form1}, and if $p \leq 3$, the structure
assumption~\eqref{eq:structure-der22} holds. Without loss of generality,
we may assume that $y \in (0,1)$, and write that
$$
\forall y \in (0,1), \quad
{\cal A}(x_a,x_c,\xi) = \bar a \frac{|\xi|^p}{p} + \bar c \frac{|\xi|^2}{2},
$$ 
with $\bar a = f_a(x^a_0)$ and $\bar c = f_c(x^c_0)$. By a slight abuse
of notation, we keep implicit the dependency with respect to $y$, work
with $\bar a$ and $\bar c$ rather than $x_a$ and $x_c$, and write
$$
{\cal A}(\bar a, \bar c,\xi) = \bar a \frac{|\xi|^p}{p} + \bar c
\frac{|\xi|^2}{2}.
$$ 
We compute 
$$
\partial_\xi {\cal A}(\bar a,\bar c,\xi) = \bar a |\xi|^{p-2} \xi + \bar c \xi
$$
and denote $\zeta \mapsto g(\bar a,\bar c,\zeta)$ the reciprocal to the
function $\xi \mapsto \partial_\xi {\cal A}(\bar a,\bar c,\xi)$:
$$
\zeta = \bar a \, \left| g(\bar a,\bar c,\zeta) \right|^{p-2} 
g(\bar a,\bar c,\zeta)
+ \bar c g(\bar a,\bar c, \zeta).
$$
The function ${\cal A}_2$ of~\eqref{eq:structure-der22} then reads
$$
{\cal A}_2(\bar a, \bar c,\zeta) = 
(p-1) \bar a \left| g(\bar a,\bar c,\zeta) \right|^{p-2} + \bar c.
$$
We are left with showing that ${\cal A}_2$ is non-decreasing with
respect to $\bar a$ and $\bar c$.

A first remark is that since $g(\bar a,\bar c,\zeta)$ has the same sign
as $\zeta$ (recall that $\bar a>0$ and $\bar c \geq 0$), we may as well
restrict ourselves to $\zeta > 0$ and $g(\bar a, \bar c,\zeta) > 0$. We
hence have
\begin{eqnarray}
\nonumber
{\cal A}_2(\bar a, \bar c, \zeta) 
&=& 
(p-1) \bar a g(\bar a, \bar c, \zeta)^{p-2} + \bar c, 
\\
\label{eq:recap-str-2}
\zeta &=& \bar a g(\bar a,\bar c,\zeta)^{p-1} + \bar c g(\bar a,\bar c, \zeta).
\end{eqnarray}
We first compute the derivative of ${\cal A}_2$ with respect to $\bar
a$:
$$
\frac{\partial {\cal A}_2}{\partial \bar a} 
= 
(p-1) g(\bar a,\bar c,\zeta)^{p-2} + (p-1)(p-2) \bar a \, g(\bar a,\bar
c,\zeta)^{p-3} \frac{\partial g}{\partial \bar a}.
$$
Using~\eqref{eq:recap-str-2} to compute $\dis \frac{\partial g}{\partial
  \bar a}$, we obtain that
$$
\left( \bar c+ (p-1) \bar a g^{p-2} \right) 
\frac{\partial {\cal A}_2}{\partial \bar a} 
= 
(p-1) \bar c g^{p-2} 
+ \bar a (p-1) g^{2p-4},
$$
and since $p >1$, $\bar a > 0$ and $g>0$, we deduce that $\dis
\frac{\partial {\cal A}_2}{\partial \bar a} \geq 0$.

We next compute the derivative of ${\cal A}_2$ with respect to $\bar
c$. Using again~\eqref{eq:recap-str-2} to compute $\dis \frac{\partial
  g}{\partial \bar c}$, we obtain that
$$
\left( \bar c+ (p-1)\bar a g^{p-2} \right) 
\frac{\partial {\cal A}_2}{\partial \bar c} 
= 
\bar c - (p-1)(p-3) \bar a g^{p-2}.
$$
Recall that $\bar a > 0$, $\bar c \geq 0$, $p >1$ and $g>0$. We have
assumed that $p \leq 3$, and therefore deduce from the above relation that 
$\dis \frac{\partial {\cal A}_2}{\partial \bar c} \geq 0$. 
The structure assumption~\eqref{eq:structure-der22} hence holds in that
case. 

\begin{remark}
The argument above also shows that the case
$$
W(y,\omega,\xi) = a(y, \omega) \frac{|\xi|^p}{p},
$$
along with assumption~\eqref{eq:random-form1}, falls in our framework,
for any $p \geq 2$.

It is likely that other settings, such as
$$
W(y,\omega,\xi) = 
\left( a(y, \omega) + c(y,\omega) \right) \frac{|\xi|^p}{p} + 
c(y, \omega) \frac{|\xi|^2}{2},
$$
along with assumption~\eqref{eq:random-form1}, where $a_k$ and $c_k$ are
all independent random variables, also fall in our framework. We will
not pursue in this direction here. 
\end{remark}

\section{Numerical results}
\label{sec:num}

Our numerical experiments are presented in Section~\ref{sec:overview},
and discussed in details in the subsequent sections. In
Section~\ref{sec:newton}, we first discuss the algorithm we used to
solve the variational problem~\eqref{eq:correc-random_N} that defines
the apparent homogenized energy density.

\subsection{Newton algorithm to solve the truncated corrector problem}
\label{sec:newton}

As mentioned above, the corrector problem~\eqref{eq:correc-random_N} is
a convex minimization problem, which has been well studied in the
literature (see e.g.~\cite{barrett,chow,glowinski,letallec}). We explain
here how we proceed in practice to solve this problem, assuming 
that $W$ is not only strictly convex, but actually $\alpha$-convex
(i.e. satisfies~\eqref{eq:hyp_alpha_convex}).

To simplify our exposition, we use
the notation of the $Q$-periodic case, where the corrector problem
is~\eqref{eq:champ-homog_per}. We introduce some basis functions $\left\{
  \varphi_i \right\}_{i \in I}$ (e.g. finite element functions) 
where $\varphi_i \in W^{1,p}_\#(Q)$, and
the finite dimensional space
$V_h = \text{Span} \left\{ \varphi_i , i \in I \right\}$. Consider the
functional
$$
J(w) =  J \left( \left\{ w_i \right\}_{i \in I} \right) = 
\int_Q W(y, \xi+\nabla w(y)) \, dy
$$
defined on $V_h$, with 
$$
w(y) = \sum_{i \in I} w_i \ \varphi_i(y),
$$
and the variational problem
\begin{equation}
\label{eq:num-min-pb}
\inf_{v_h \in V_h} J(v_h).
\end{equation}
This problem has a unique solution (denoted $w_h \in V_h$) up to the
addition of a constant. The quantity $\nabla w_h$ is well-defined, and
is the finite-dimensional approximation of $\nabla w$, where $w$ is the
solution to~\eqref{eq:champ-homog_per}.

In practice, problem~\eqref{eq:num-min-pb} is solved using a Newton
algorithm. We see that 
$$
\frac{\partial J}{\partial w_j}(w) = D_w (\varphi_j)
\quad \text{and} \quad
\frac{\partial^2 J}{\partial w_j \partial w_k}(w)
= H_w (\varphi_j, \varphi_k)
$$ 
where
$$
D_w (\varphi)
= \int_Q \nabla \varphi(y) \cdot \partial_\xi W (y, \xi + \nabla w(y)) \, dy
$$
and
$$
H_w (\varphi, \psi) = \int_Q \left( \nabla \varphi(y) \right)^T 
\partial^2_\xi W(y, \xi + \nabla w(y)) \, \nabla \psi(y) \, dy.
$$
The Newton algorithm consists in defining $w_h^{m+1} \in V_h$ from
$w_h^m \in V_h$ by the following linear elliptic problem: find
$w_h^{m+1} \in V_h$ such that 
$$
\forall \theta \in V_h, \quad
H_{w_h^m} (w_h^{m+1} - w_h^m, \theta) = - D_{w_h^m} (\theta).
$$
Again, $w_h^{m+1}$ is uniquely defined up to the addition of a
constant. 

The finite-dimensional problem~\eqref{eq:num-min-pb} is
$\alpha$-convex, and $W$ is smooth with respect to $\xi$: the Newton
algorithm hence locally converges (quadratically), and $\dis \lim_{m
  \rightarrow \infty} \nabla w_h^m = \nabla w_h$. 

In practice, we consider a sequence ${\cal T}_{h}$ of meshes on $Q$, and
set $\dis V_h = \PP^1_h(Q) = \left\{ v_h \in C(Q) \text{ s.t. } \forall T \in
  {\cal T}_h, v_h \text{ is affine on $T$} \right\}$. 
By classical finite element results, we know that 
$\dis \lim_{h \to 0} \| \nabla w_h - \nabla w \|_{L^p(Q)} = 0$ (see
e.g.~\cite{barrett} and also~\cite{thomee,abdulle_vilmart}). 

\subsection{Overview of numerical results}
\label{sec:overview}

We have considered three test-cases of the
form~\eqref{eq:test-case}--\eqref{eq:random-form1}, namely
\begin{gather*}
W(y,\omega,\xi) = 
a(y, \omega) \frac{|\xi|^p}{p} + c(y, \omega) \frac{|\xi|^2}{2}
\\
\text{with }
a(y,\omega) = \sum_{k \in \mathbb{Z}^d} \mathbf{1}_{Q + k}(y)
a_k(\omega)
\ \ \text{and} \ \ 
c(y,\omega) = \sum_{k \in \mathbb{Z}^d} \mathbf{1}_{Q + k}(y) c_k(\omega),
\end{gather*}
with $p=4$, in dimension $d=2$. The
random variables $a_k$ follow a Bernoulli distribution:
$\PP(a_k = \alpha) = \PP(a_k = \beta) = 1/2$, with 
$\alpha=3$ and $\beta=23$. The value of the field $c$ is chosen as
follows:
\begin{itemize}
\item Test Case 1: in this first test case, $c(y,\omega) = 0$. The
  problem is thus strictly convex but not $\alpha$-convex. In
  addition, the energy density is positively homogeneous of degree $p$,
  hence Remarks~\ref{rem:mon-hom} and~\ref{rem:mon-hom2} apply.
\item Test Case 2: the second test case corresponds to $c(y,\omega) =
  1$. The problem 
  is then $\alpha$-convex, and highly oscillatory only in its non-harmonic
  component. 
\item Test Case 3: for the third test case, we work with $c(y,\omega)$ chosen
  according to~\eqref{eq:random-form1}, where 
$\PP(c_k = \gamma) = \PP(c_k = \delta) = 1/2$, with 
$\gamma=1$ and $\delta=3$. The problem is thus highly oscillatory both
in its non-harmonic and its harmonic components. 
\end{itemize}
We take the meshsize $h=0.2$.
The Newton algorithm is initialized with the solution $w_0$ to
$$
-\hbox{\rm div}\left[ (a(y,\omega) + c(y,\omega)) (\xi + \nabla w_0) \right] = 0
\quad \text{in $Q_N$}, 
\quad
\text{$w_0$ is $Q_N$-periodic},
$$
and the iterations stop when
$\dis \frac{\| w_h^{n+1} - w_h^n \|_{W^{1,p}}}{\| w_h^n \|_{W^{1,p}}} 
\leq {\tt tol}$. 
If {\tt tol} is chosen too large, then~\eqref{eq:num-min-pb} is
inaccurately solved, and the variance reduction is not very good. For our
numerical tests, we set ${\tt tol} = 10^{-5}$: the discrete
problem~\eqref{eq:num-min-pb} is accurately solved, while only a
limited number of iterations (in practice, around 5 iterations) are
needed. 

For the numerical tests, we adopt the convention that $Q_N =
(-N/2,N/2)^2$. For each $Q_N$, the standard Monte Carlo results have
been obtained using $2M=100$ realizations (from which we build the
empirical estimator~\eqref{eq:estim1}). For the antithetic variable
approach, we have also solved $2M$ corrector problems, from which we
build the
empirical estimator~\eqref{eq:estim2}. Therefore, in all what follows,
we compare the accuracy of the Monte Carlo approach (MC) and the Antithetic
Variable approach (AV) at {\em equal computational cost}. 

\subsection{Test Case 1}

In this test case, the energy density is positively homogeneous. We
therefore know, from Proposition~\ref{prop:strong} and
Remark~\ref{rem:mon-hom2}, that our approach yields estimations of the
expectation of $W^\star_N(\omega,\xi)$, 
$\xi \cdot \partial_\xi W^\star_N(\omega,\xi)$ and
$\xi^T \partial^2_\xi W^\star_N(\omega,\xi) \xi$ with a smaller variance
than the standard Monte Carlo approach. 
Our aim here is to {\em quantify} the efficiency gain. Note also that we
have {\em not} taken into account, in our implementation, the fact that
$W^\star_N(\omega,\xi)$, $\xi \cdot \partial_\xi W^\star_N(\omega,\xi)$
and $\xi^T \partial^2_\xi W^\star_N(\omega,\xi) \xi$ are here
proportional to one another.

\medskip

To begin with, we show on Figure~\ref{per-homog-wrt-dof} the estimation by
empirical means (along with a 95 \% confidence interval) of several
homogenized quantities (the energy density, its derivatives with respect
to each component of $\xi$, \dots). We observe that the variance
of all quantities decreases when the size of $Q_N$ increases, and
that confidence intervals obtained with the antithetic variable approach
are smaller than those obtained with a standard Monte Carlo approach, for
an equal computational cost. 

\begin{figure}[htpb!]
\begin{center}
\includegraphics[width=5.35cm]{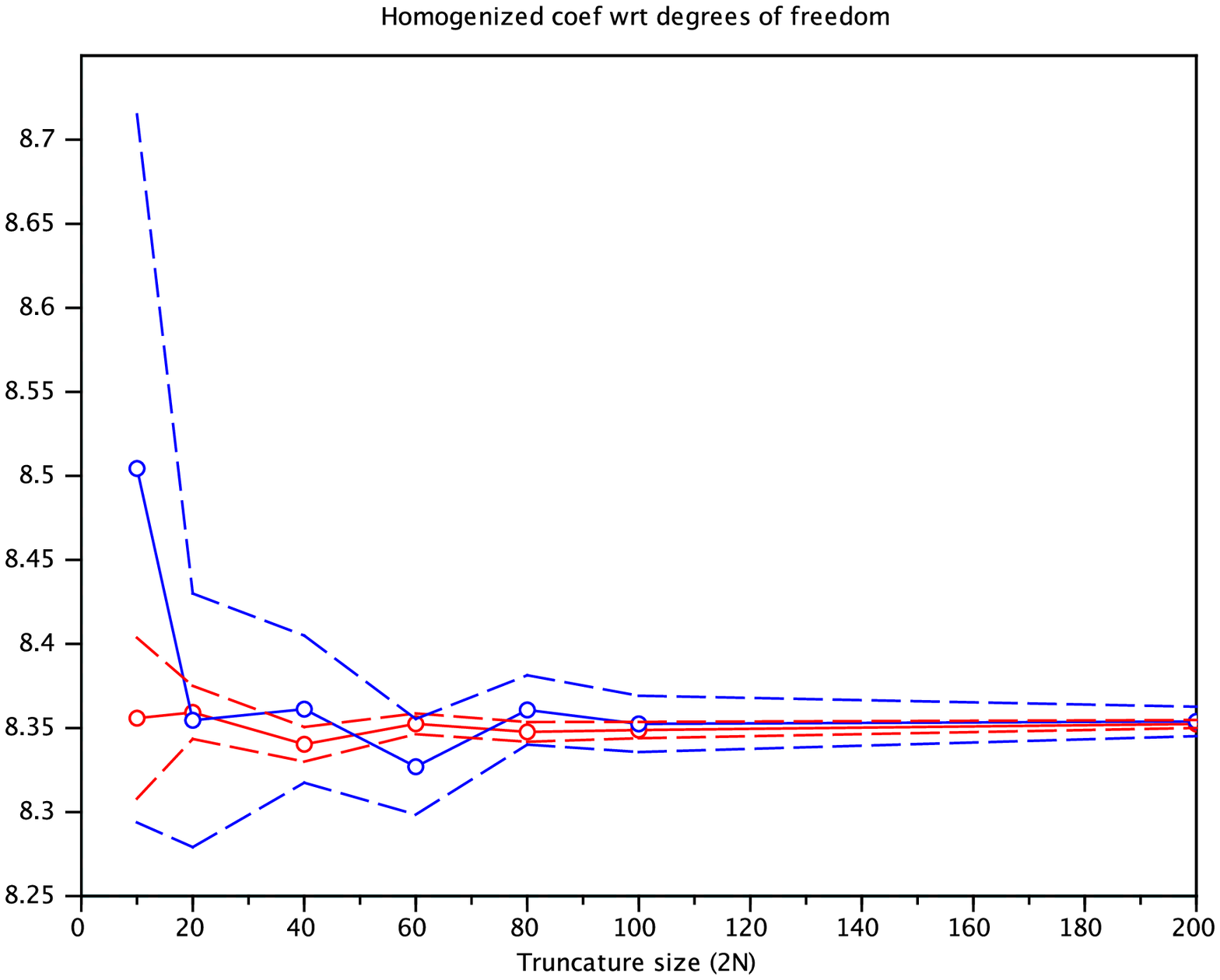}
\quad
\includegraphics[width=5.35cm]{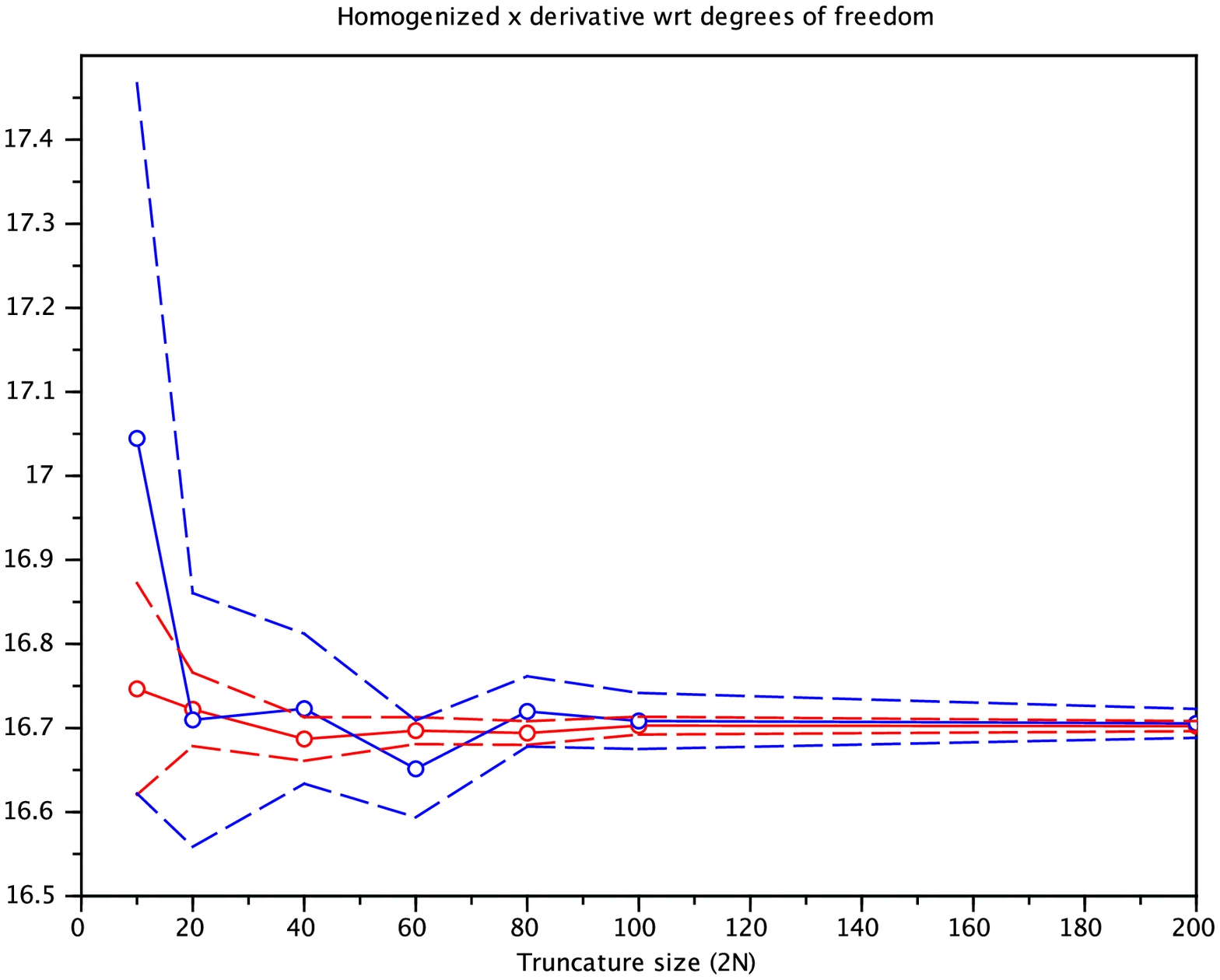}
\includegraphics[width=5.35cm]{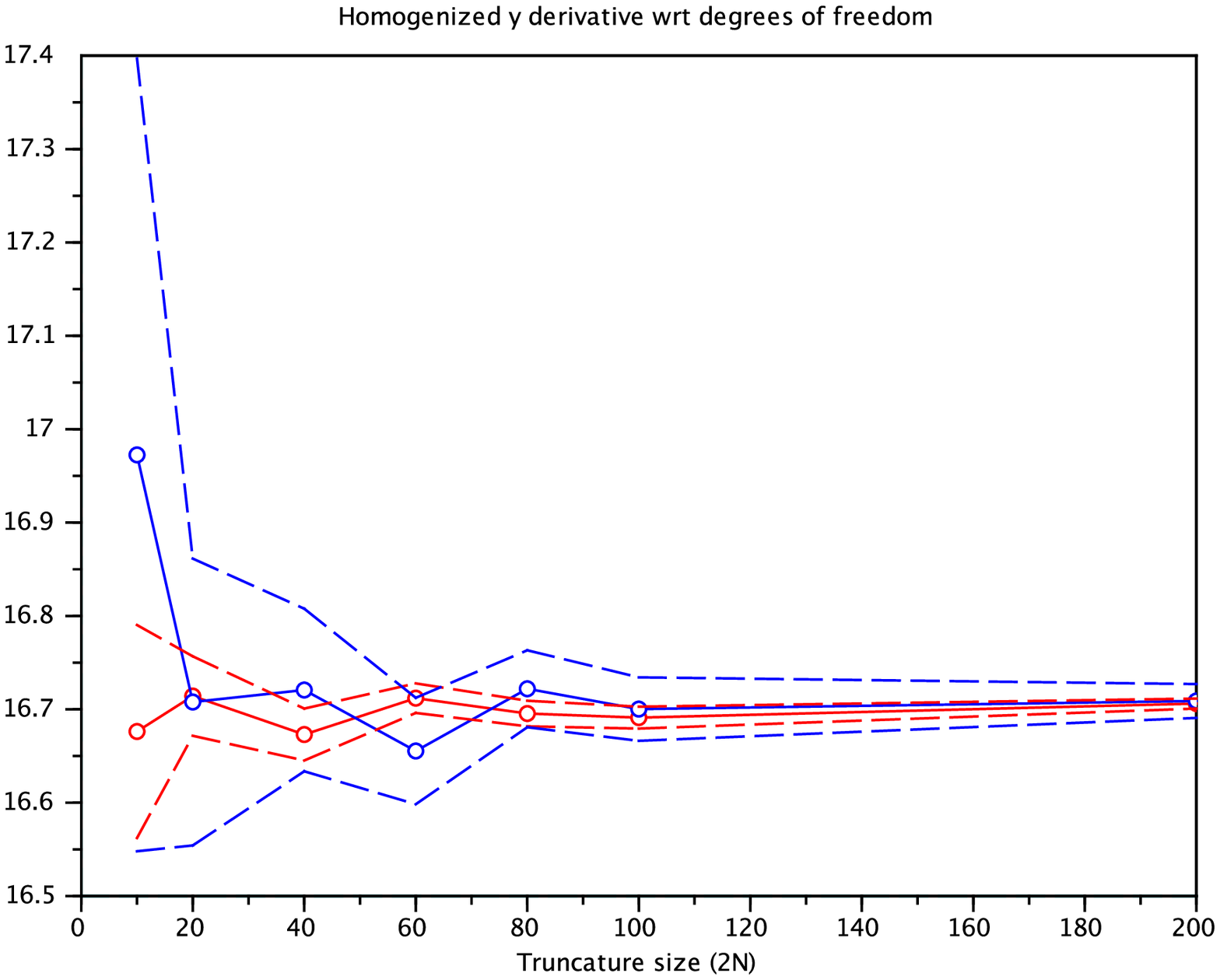}
\quad
\includegraphics[width=5.35cm]{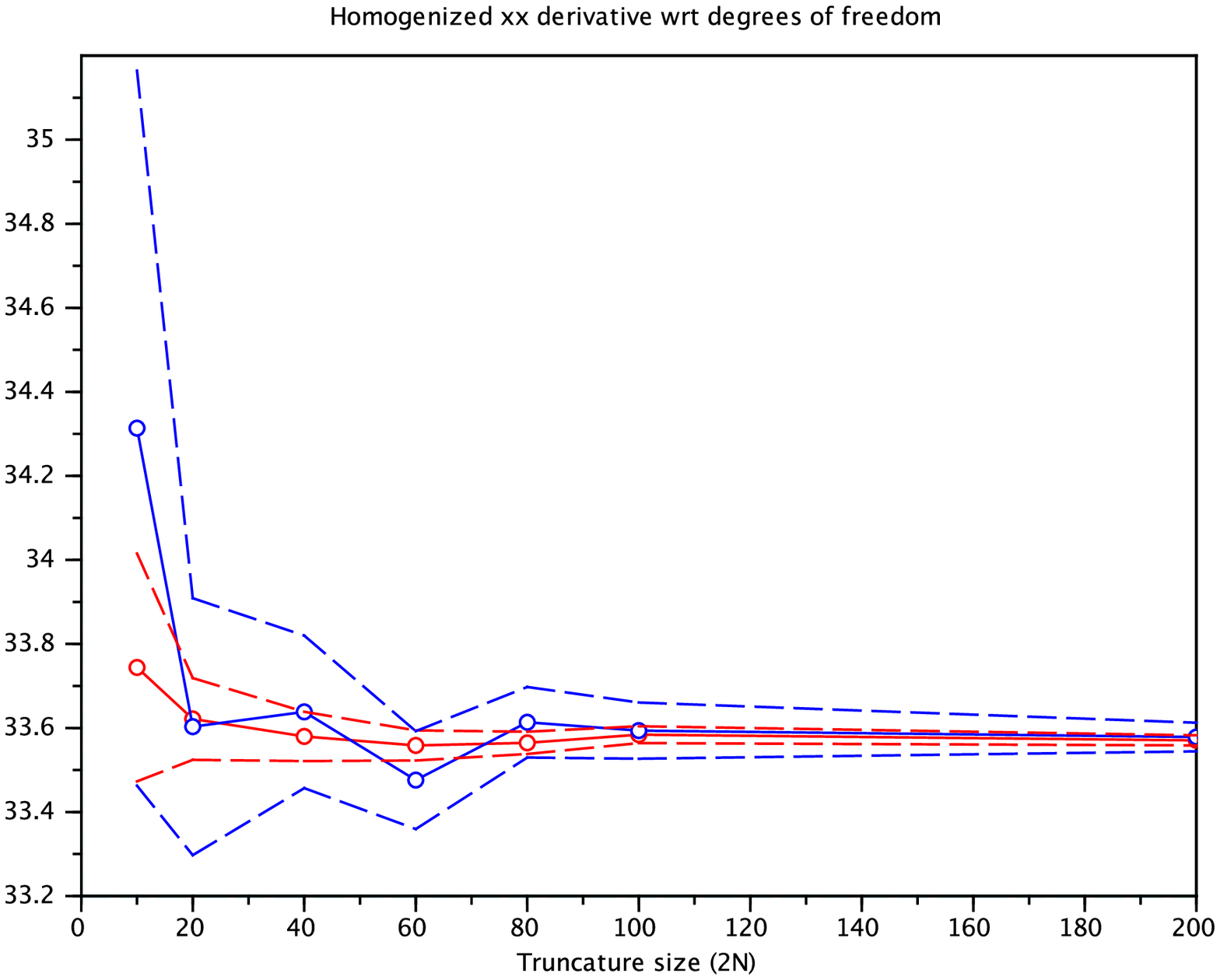}
\includegraphics[width=5.35cm]{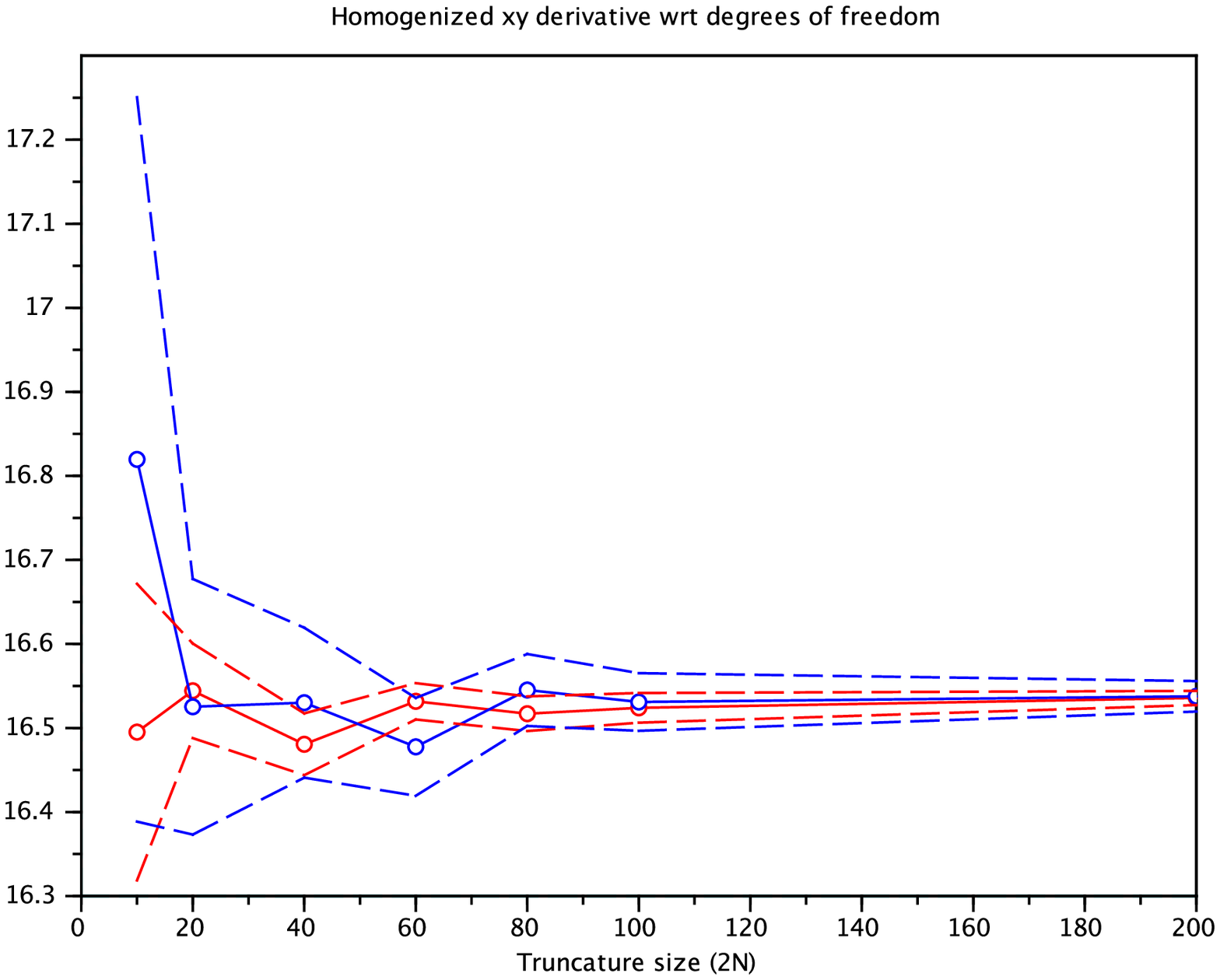}
\quad
\includegraphics[width=5.35cm]{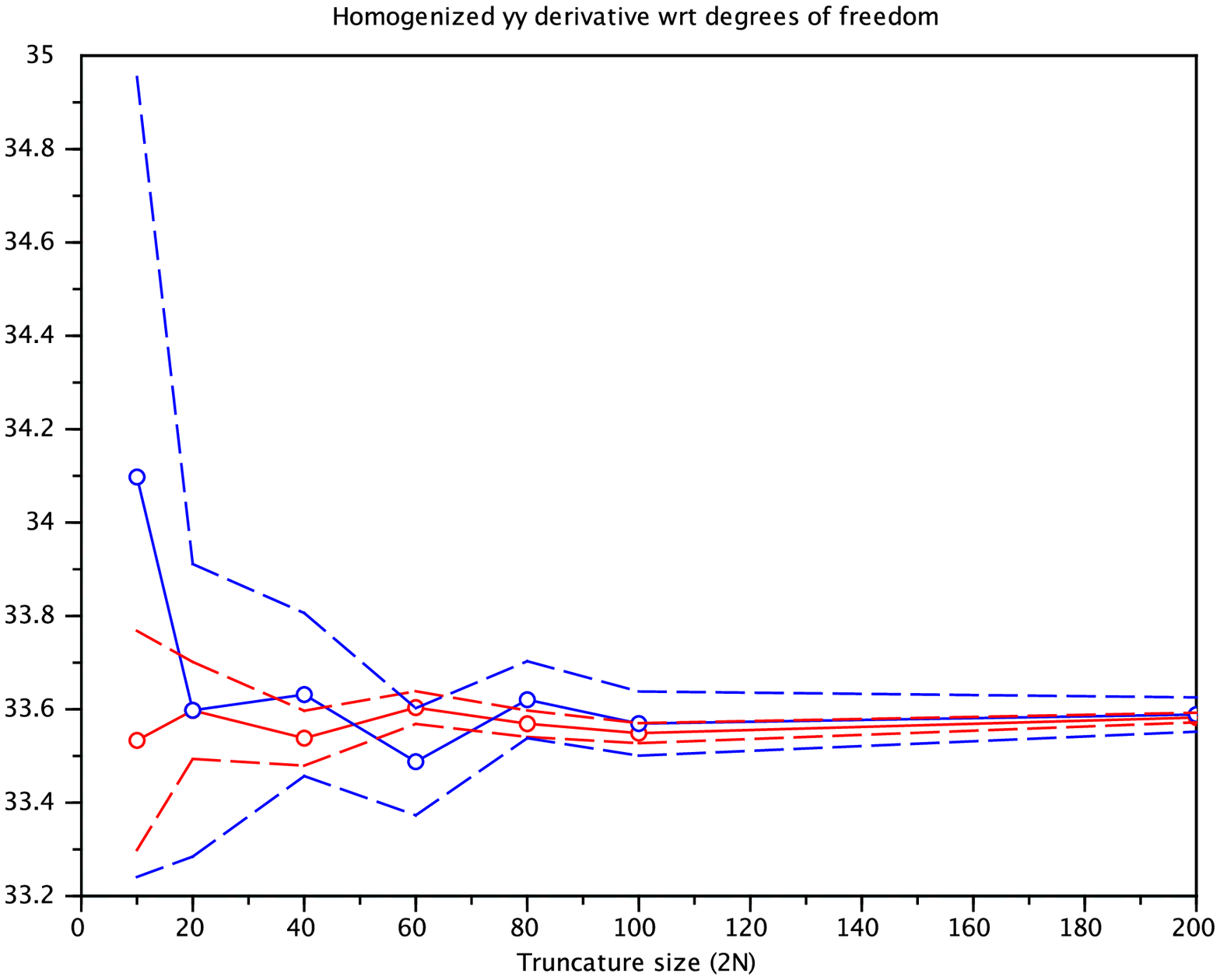}
\includegraphics[width=5.35cm]{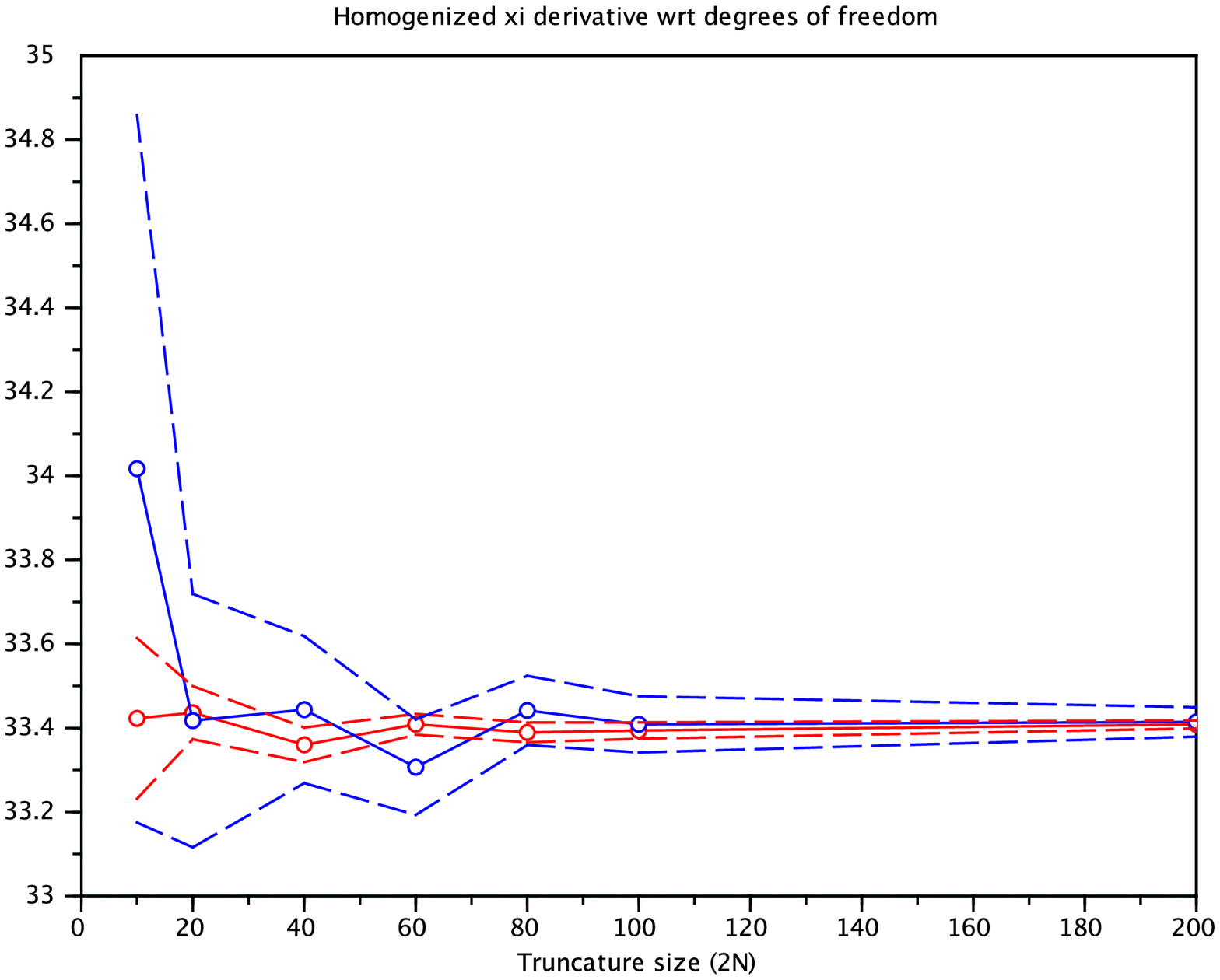}
\quad
\includegraphics[width=5.35cm]{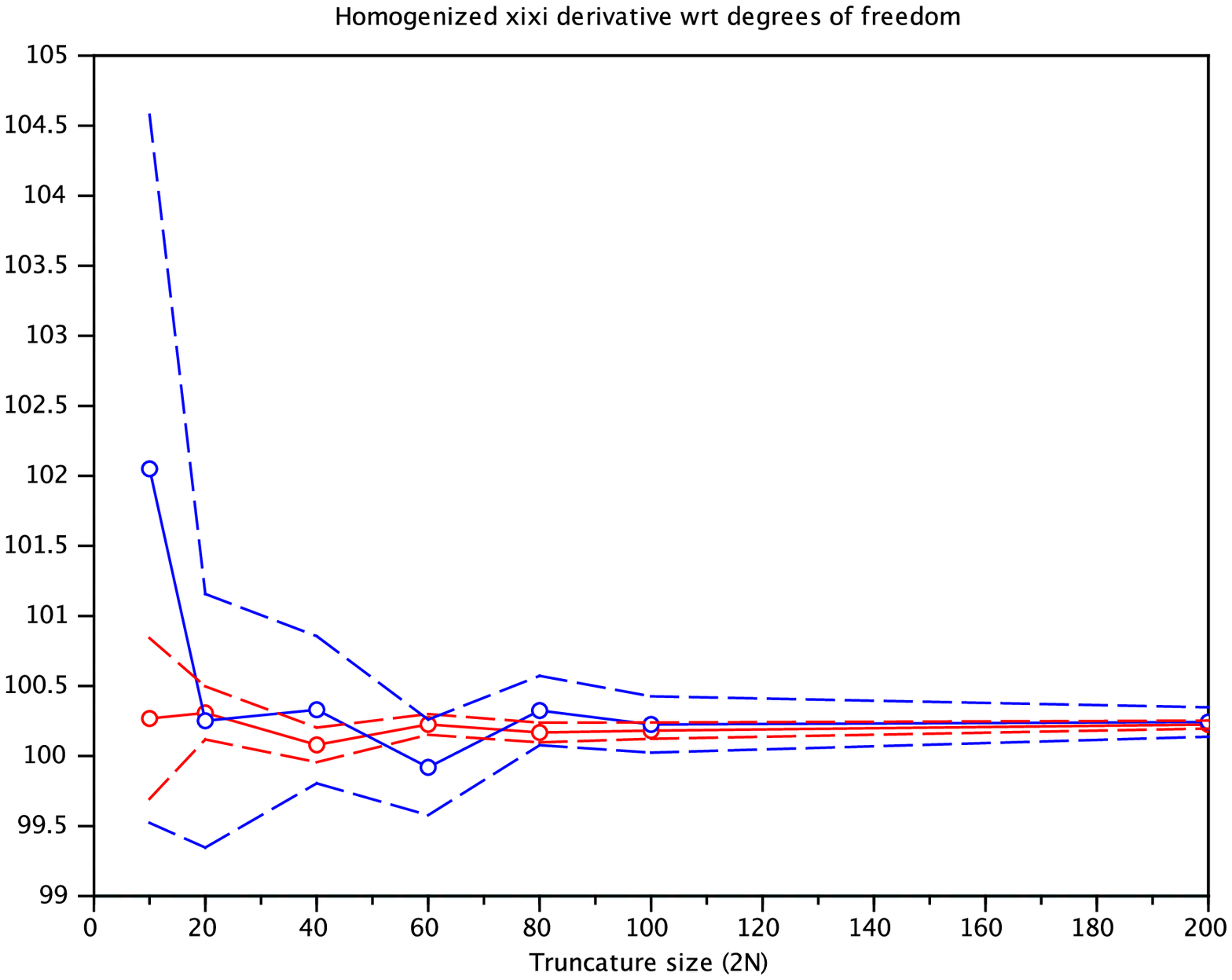}
\caption{Test-Case 1: Homogenized quantities as a function of $N$, for
  the vector $\xi=(1,1)^T$ (Blue: Monte Carlo
  results; Red: Antithetic variable approach; Dashed lines: 95\%
  confidence interval, equating the cost of the two approaches). 
From top left to bottom right: $\EE \left[ W^\star_N(\cdot,\xi) \right]$,
$\EE \left[ \partial_{\xi_1} W^\star_N(\cdot,\xi) \right]$,
$\EE \left[ \partial_{\xi_2} W^\star_N(\cdot,\xi) \right]$,
$\EE \left[ \partial_{\xi_1 \xi_1} W^\star_N(\cdot,\xi) \right]$,
$\EE \left[ \partial_{\xi_1 \xi_2} W^\star_N(\cdot,\xi) \right]$,
$\EE \left[ \partial_{\xi_2 \xi_2} W^\star_N(\cdot,\xi) \right]$,
$\EE \left[ \xi \cdot \partial_\xi W^\star_N(\cdot,\xi) \right]$ and
$\EE \left[ \xi^T \partial^2_\xi W^\star_N(\cdot,\xi) \xi \right]$.
\label{per-homog-wrt-dof}}
\end{center}
\end{figure}

We now turn to a more quantitative analysis of the
variance. Figure~\ref{per-plap-var-wrt-dof} shows the variances
\begin{equation}
\label{eq:def_variances}
V_{\rm MC} = \frac{1}{2} \Var \left[ W^\star_N(\cdot,\xi) \right]
\quad \text{and} \quad
V_{\rm AV} = \Var \left[ \widetilde{W}^\star_N(\cdot,\xi) \right]
\end{equation}
as a function of $N$ (note the factor $1/2$ in the definition of $V_{\rm
  MC}$, consistent with~\eqref{eq:resu0_1},~\eqref{eq:estim1}
and~\eqref{eq:estim2}). We observe that the variance of
any of our quantities of interest (obtained either with the Monte
Carlo approach or the Antithetic Variable approach) decreases at the
rate $1/|Q_N|$ as $N$ increases (as expected if one could use the
Central Limit Theorem). We also observe that the variance obtained with
our approach is systematically smaller than the Monte Carlo
variance, in the sense that $V_{\rm AV} \leq V_{\rm MC}$.

\begin{figure}[h!]
\begin{center}
\includegraphics[width=5.1cm]{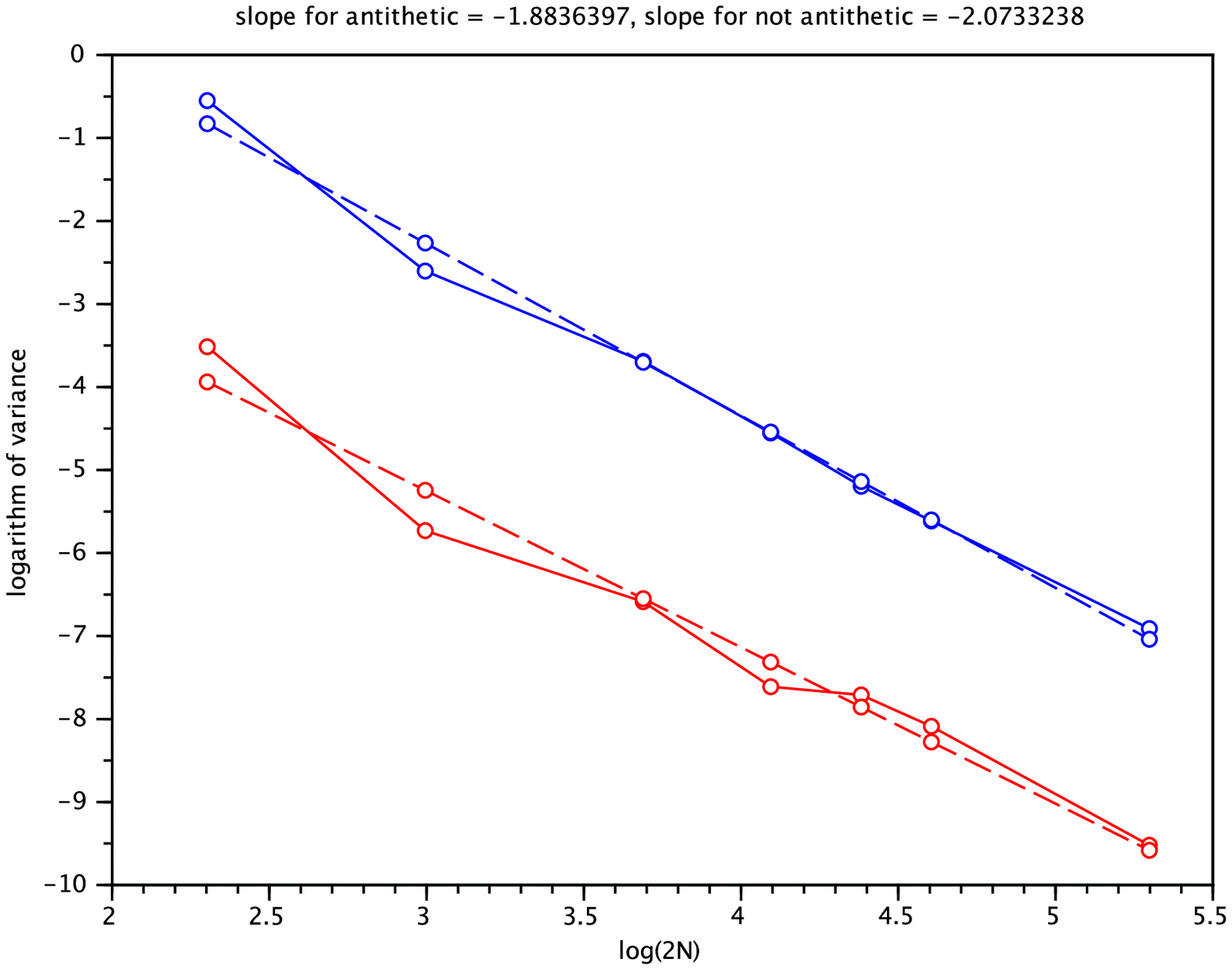}
\quad
\includegraphics[width=5.1cm]{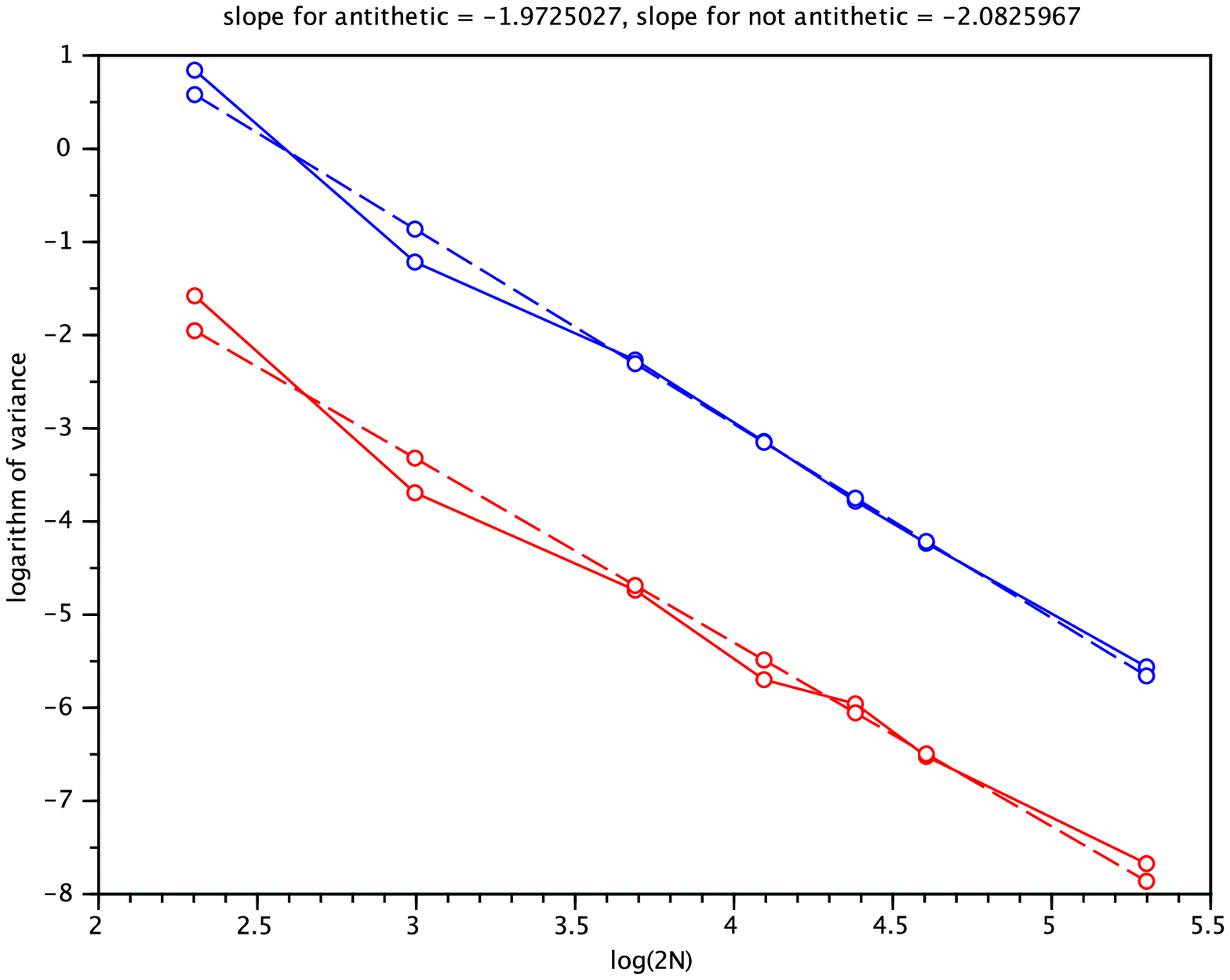}
\includegraphics[width=5.1cm]{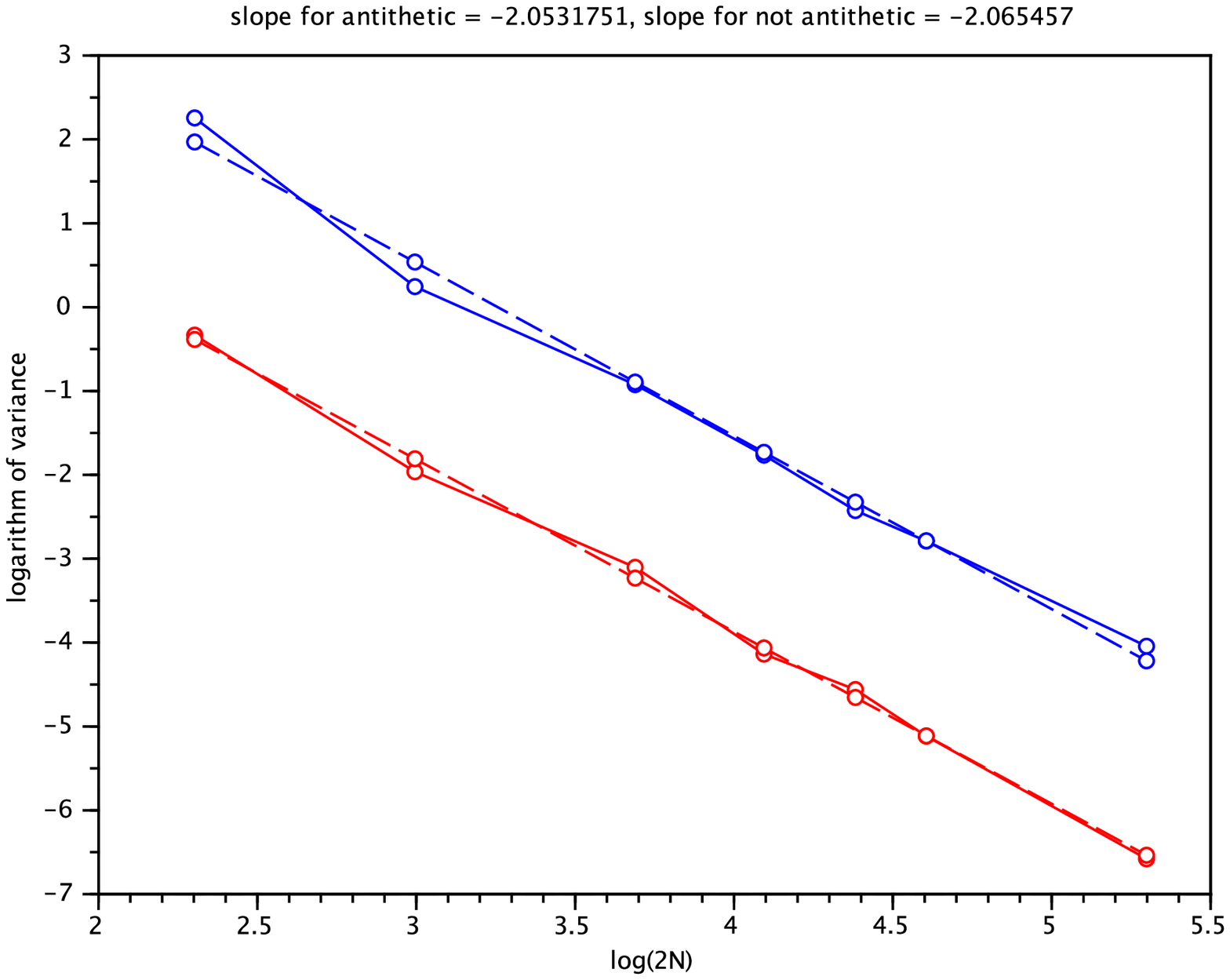}
\quad
\includegraphics[width=5.1cm]{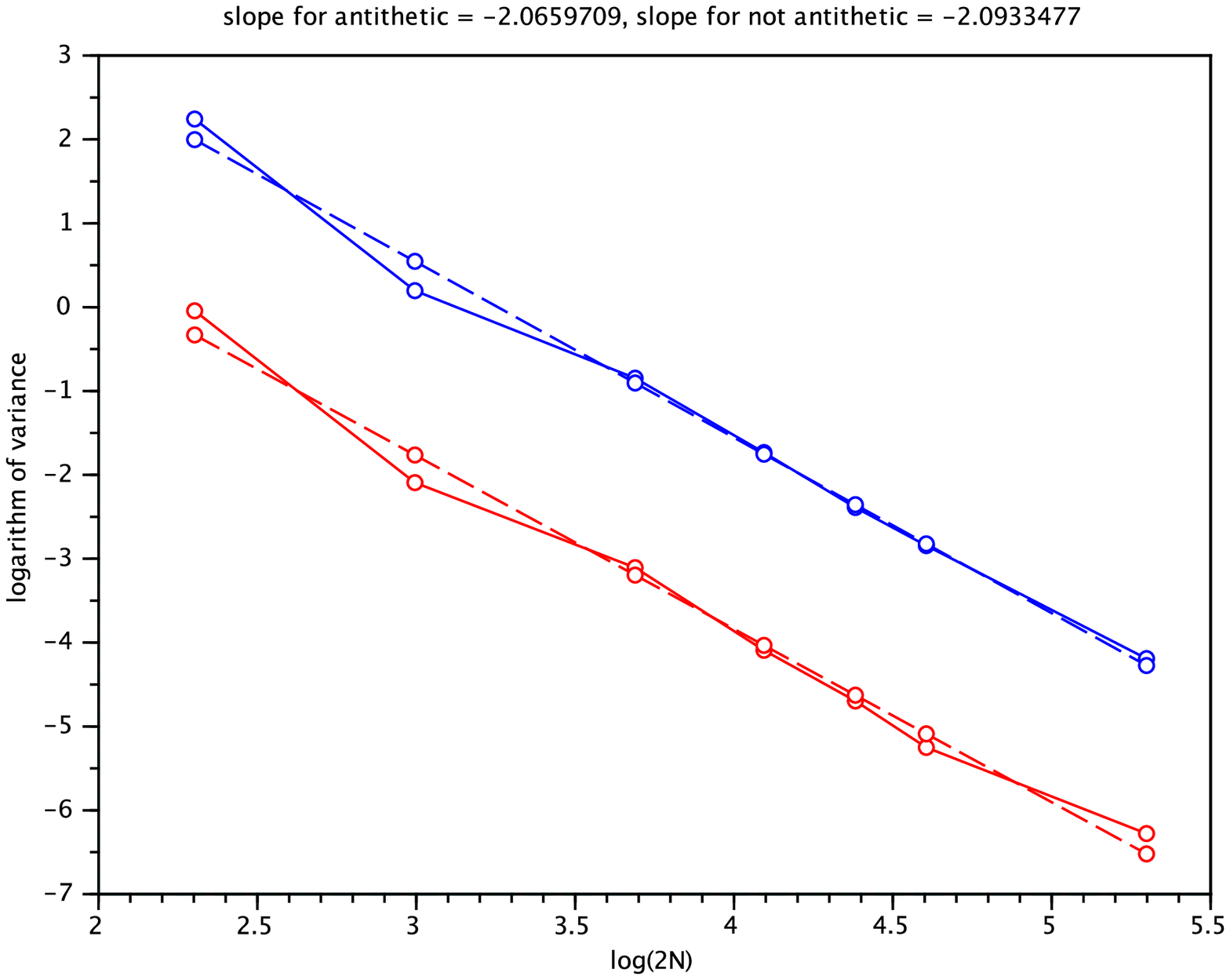}
\includegraphics[width=5.1cm]{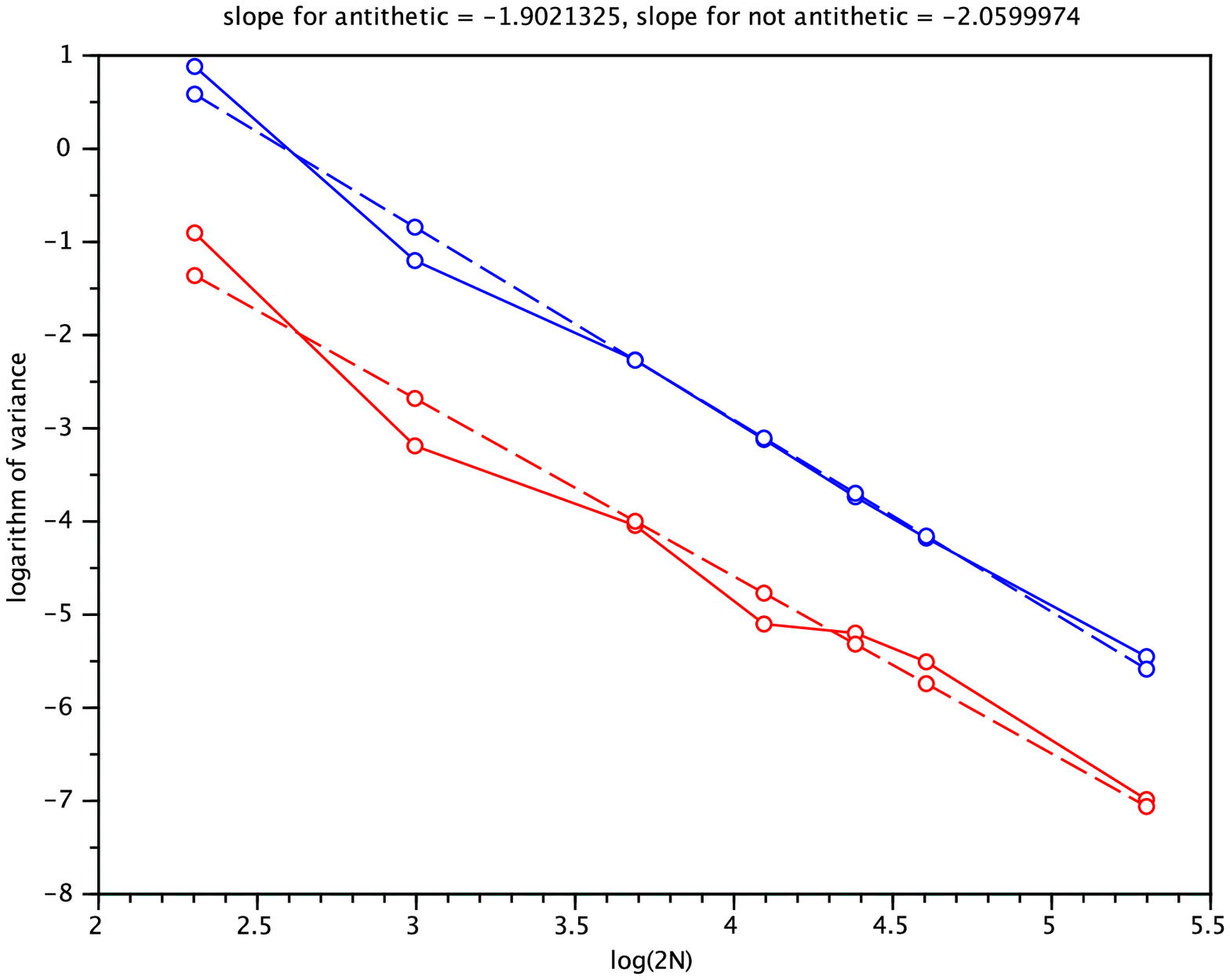}
\quad
\includegraphics[width=5.1cm]{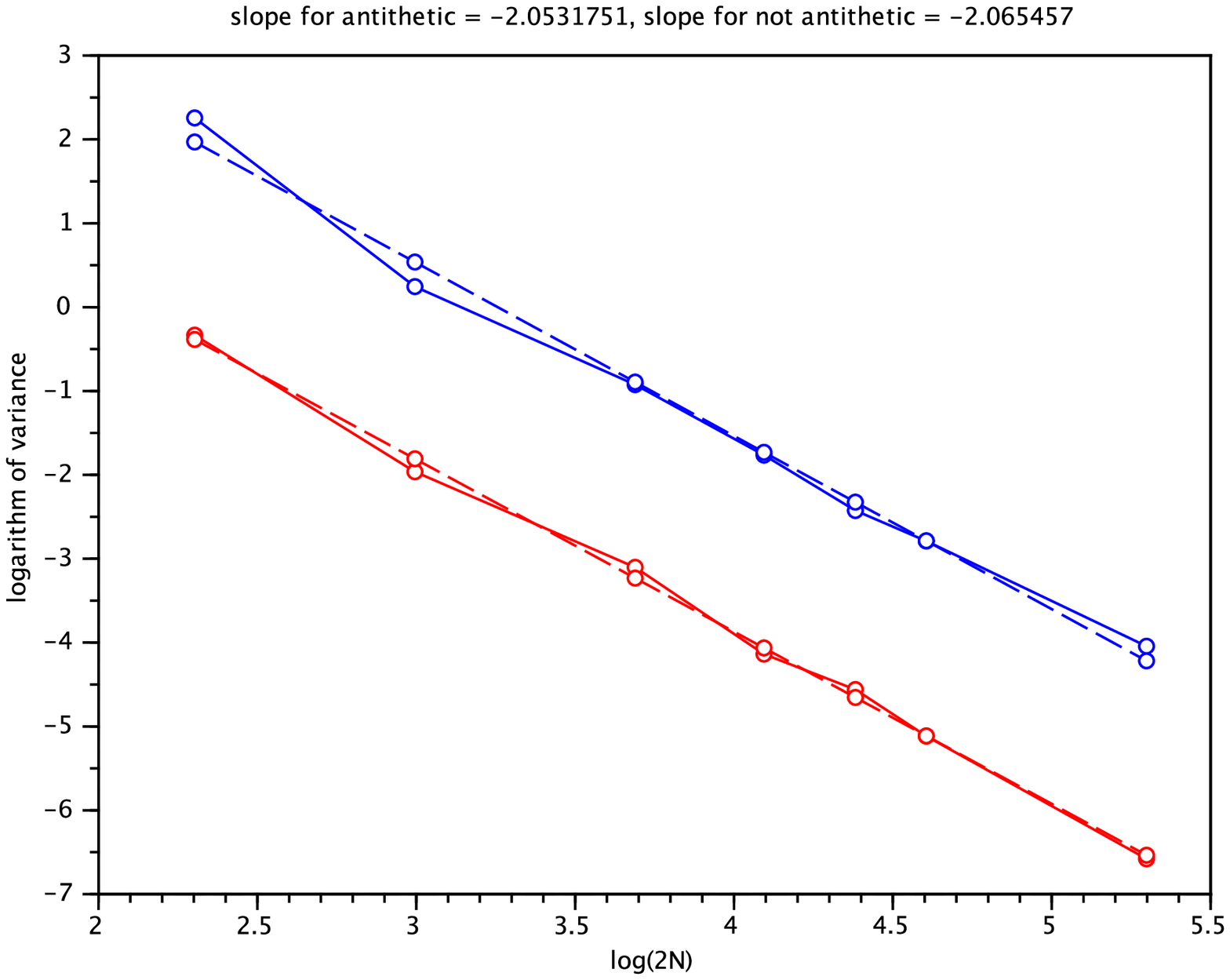}
\includegraphics[width=5.1cm]{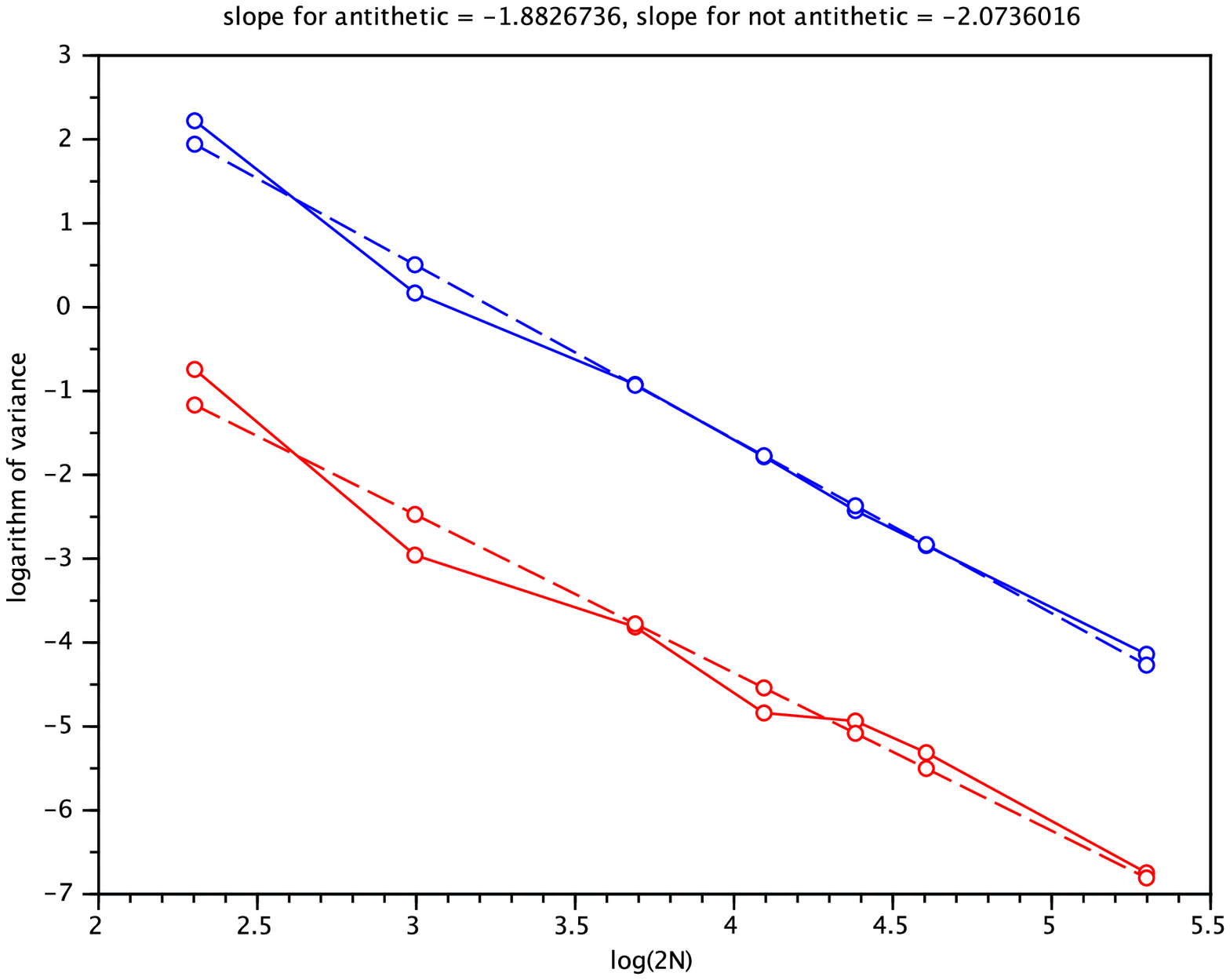}
\quad
\includegraphics[width=5.1cm]{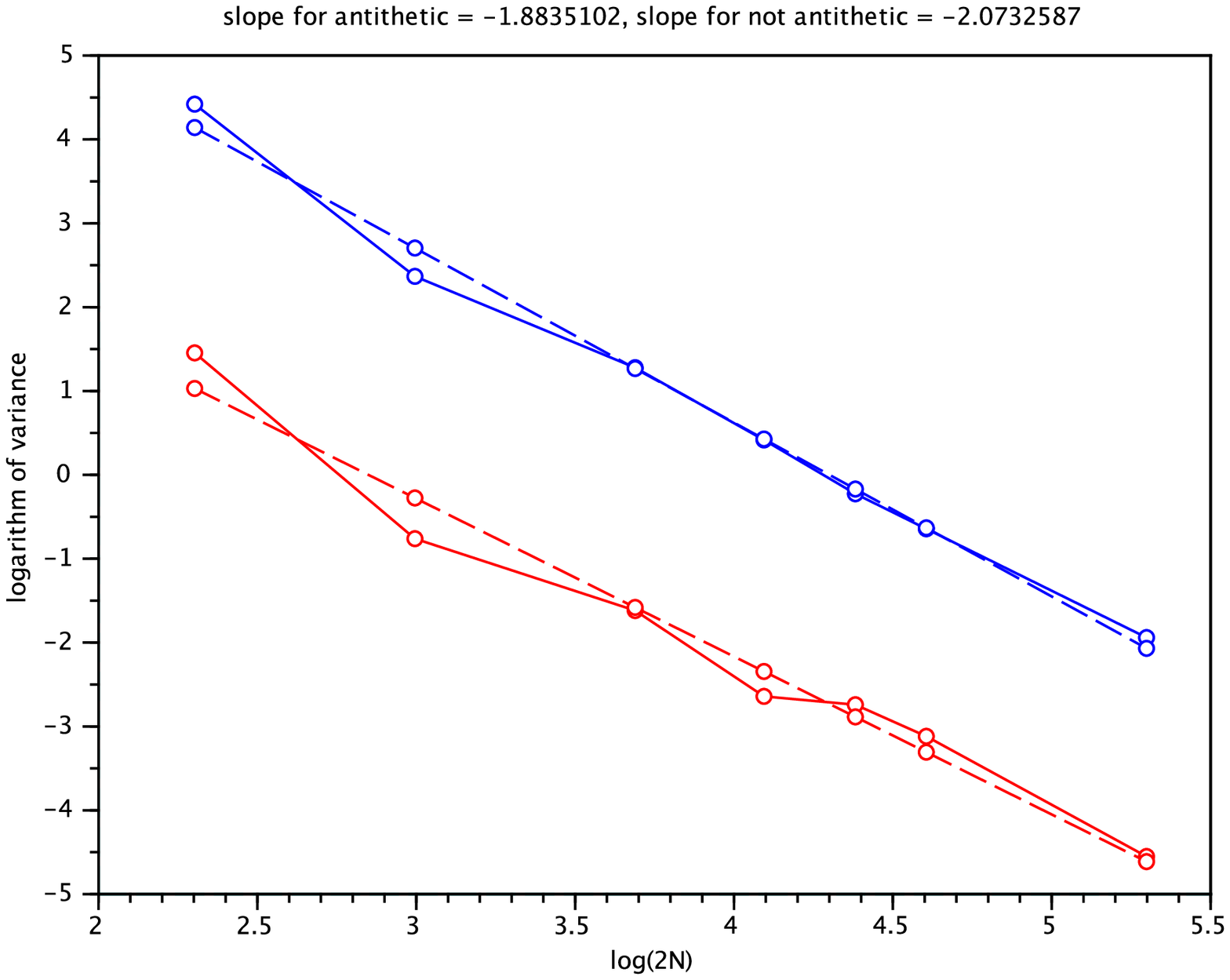}
\caption{Test-Case 1: Variances~\eqref{eq:def_variances} of the same 
quantities of interest as on Figure~\ref{per-homog-wrt-dof}, 
as a function of $N$ (Blue: Monte Carlo approach; 
  Red: Antithetic Variable approach).
\label{per-plap-var-wrt-dof}}
\end{center}
\end{figure}

We next report on Table~\ref{tab:p-lap-var-red} the variance reduction
ratio
\begin{equation}
\label{eq:def_R}
R 
= 
\frac{V_{\rm MC}}{V_{\rm AV}}
=
\frac{
\Var \left[ W^\star_N(\cdot,\xi) \right]
}
{2 \Var \left[ \widetilde{W}^\star_N(\cdot,\xi) \right]},
\end{equation}
which measures the gain in computational cost at equal accuracy, or the
square of the accuracy gain at equal computational cost. Although this
ratio somewhat varies with $N$, we observe
that it is of the order of 10 for all quantities of
interest, except for $\partial_{\xi_1 \xi_2} W^\star_N$, for which it is
always larger than 4.
In particular, even if $N$ is not large (because we cannot afford
to work on a large domain $Q_N$), we still observe variance reduction. 

\begin{table}[!h]
\begin{center}
\begin{tabular}{|c|c|c|c|c|c|c|c|c|}
$2N$ & $W^\star_N$ & $\partial_{\xi_1} W^\star_N$ & $\partial_{\xi_2}
W^\star_N$ & $\partial_{\xi_1 \xi_1} W^\star_N$ & $\partial_{\xi_1 \xi_2}
W^\star_N$ & $\partial_{\xi_2 \xi_2} W^\star_N$ &
$\xi \cdot \partial_\xi W^\star_N$ & 
$\xi^T \partial^2_\xi W^\star_N \xi$ \\
\hline
10 & 19.41 & 11.26 & 13.86 & 9.846 & 5.966 & 13.34 & 19.39 & 19.41  
\\
20 & 22.82 & 11.89 & 13.03 & 9.865 & 7.306 & 9.096 & 22.77 & 22.83
\\
40 & 18.08 & 11.82 & 9.816 & 9.576 & 5.904 & 8.831 & 18.03 & 18.11  
\\
60 & 21.26 & 12.89 & 12.98 & 10.57 & 7.247 & 10.73 & 21.24 & 21.28   
\\
80 & 12.36 & 8.798 & 9.050 & 10.05 & 4.316 & 8.454 & 12.31 & 12.37
\\
100 & 11.88 & 9.856 & 8.412 & 11.10 & 3.775 & 10.24 & 11.82 & 11.88 
\\
200 & 13.60 & 8.261 & 11.52 & 8.057 & 4.636 & 12.62 & 13.54 & 13.61
\\
\end{tabular}
\caption{Test-Case 1: Variance reduction ratios~\eqref{eq:def_R}.\label{tab:p-lap-var-red}}
\end{center}
\end{table}

\begin{remark}
Similar variance reduction ratios are obtained in the case when the
corrector problem is supplemented with homogeneous Dirichlet boundary
conditions on the boundary on $Q_N$, rather than periodic boundary
conditions as used here following~\eqref{eq:correc-random_N} (results
not shown). 
\end{remark}

\subsection{Test Case 2}

We now consider a test-case for which the energy density is not
positively homogeneous. From our results of
Section~\ref{sec:main_results}, we know that our
approach yields variance reduction for the estimation of $\EE \left[
  W^\star_N(\cdot,\xi) \right]$. Our aim here is two-fold: we first quantify
the efficiency gain, and we next verify (and this will indeed be the
case) that we also obtain a gain in efficiency for quantities of
interest (such as the first or second derivatives of $W^\star_N(\omega,\xi)$
with respect to $\xi$) for which we do not have theoretical results in
the two-dimensional case. 

We show on Figure~\ref{per-non-linear-log-var-wrt-dof} the
variances~\eqref{eq:def_variances} of the same
quantities of interest as previously (obtained either with the Monte
Carlo approach or the Antithetic Variable approach). As for the previous
test-case, we observe that all variances decrease at the
rate $1/|Q_N|$ as $N$ increases. In addition, we observe that the
variance obtained with our approach is systematically smaller than
the Monte Carlo variance, in the sense that $V_{\rm AV} \leq V_{\rm MC}$.

\begin{figure}[h!]
\begin{center}
\includegraphics[width=5.3cm]{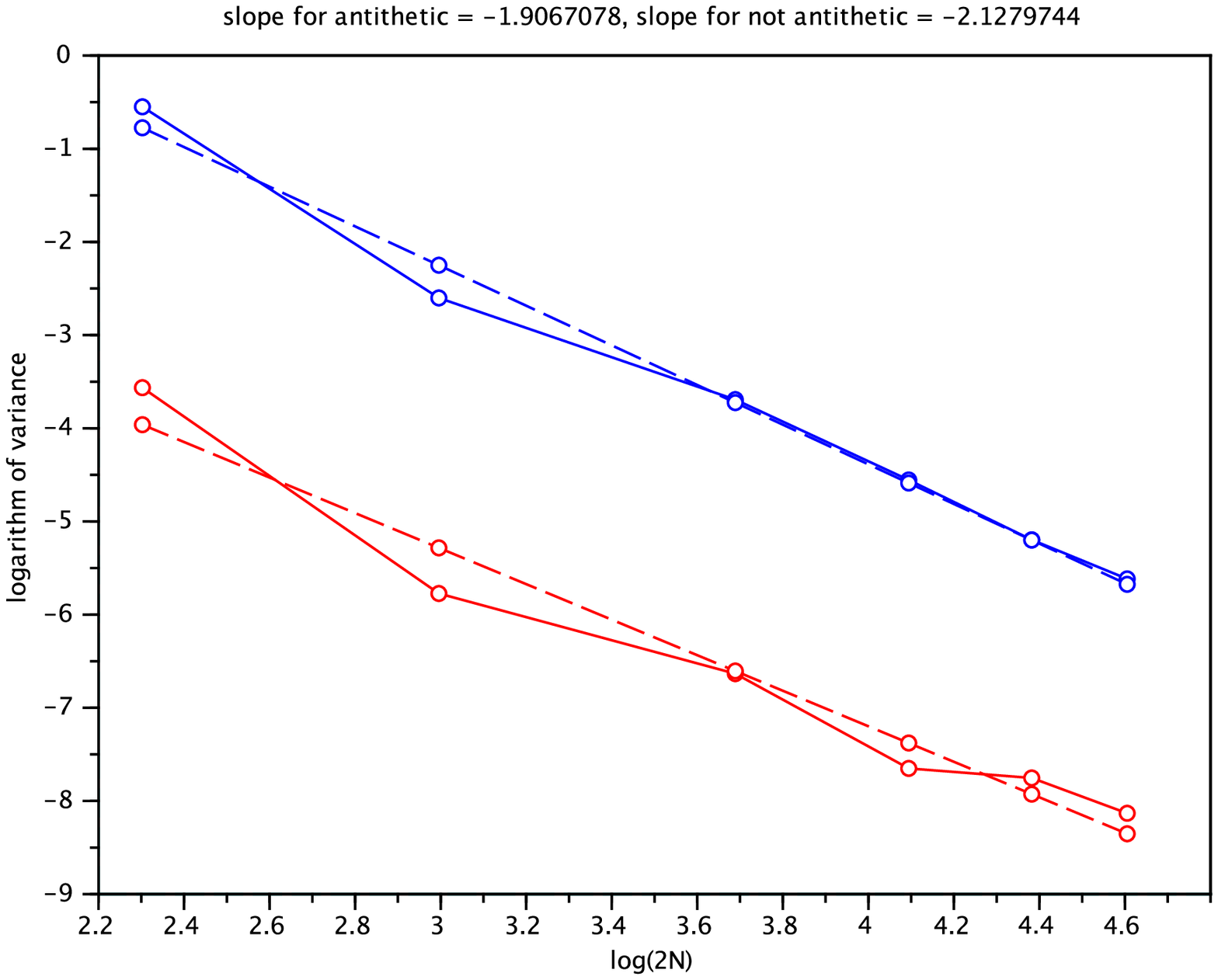}
\quad
\includegraphics[width=5.3cm]{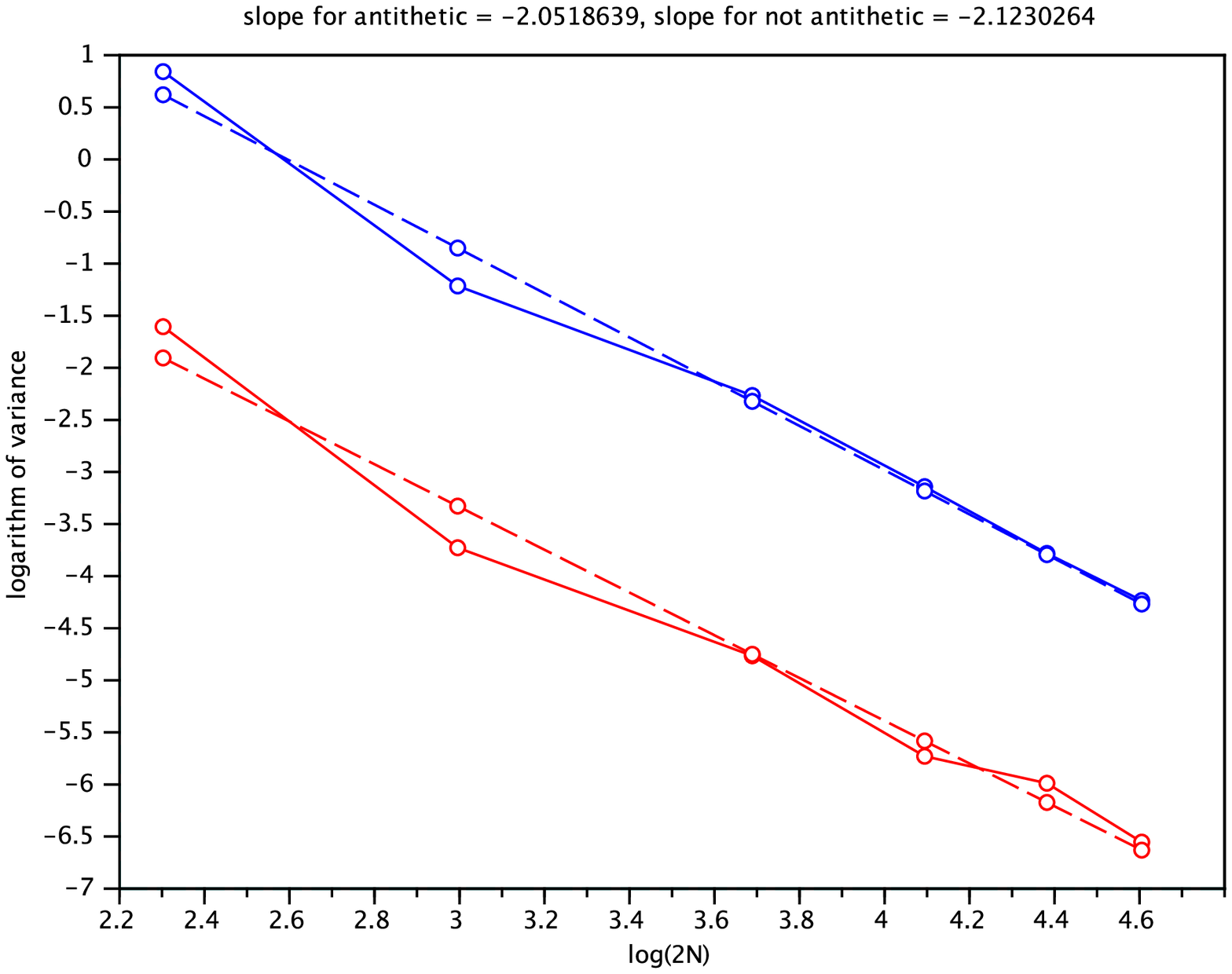}
\includegraphics[width=5.3cm]{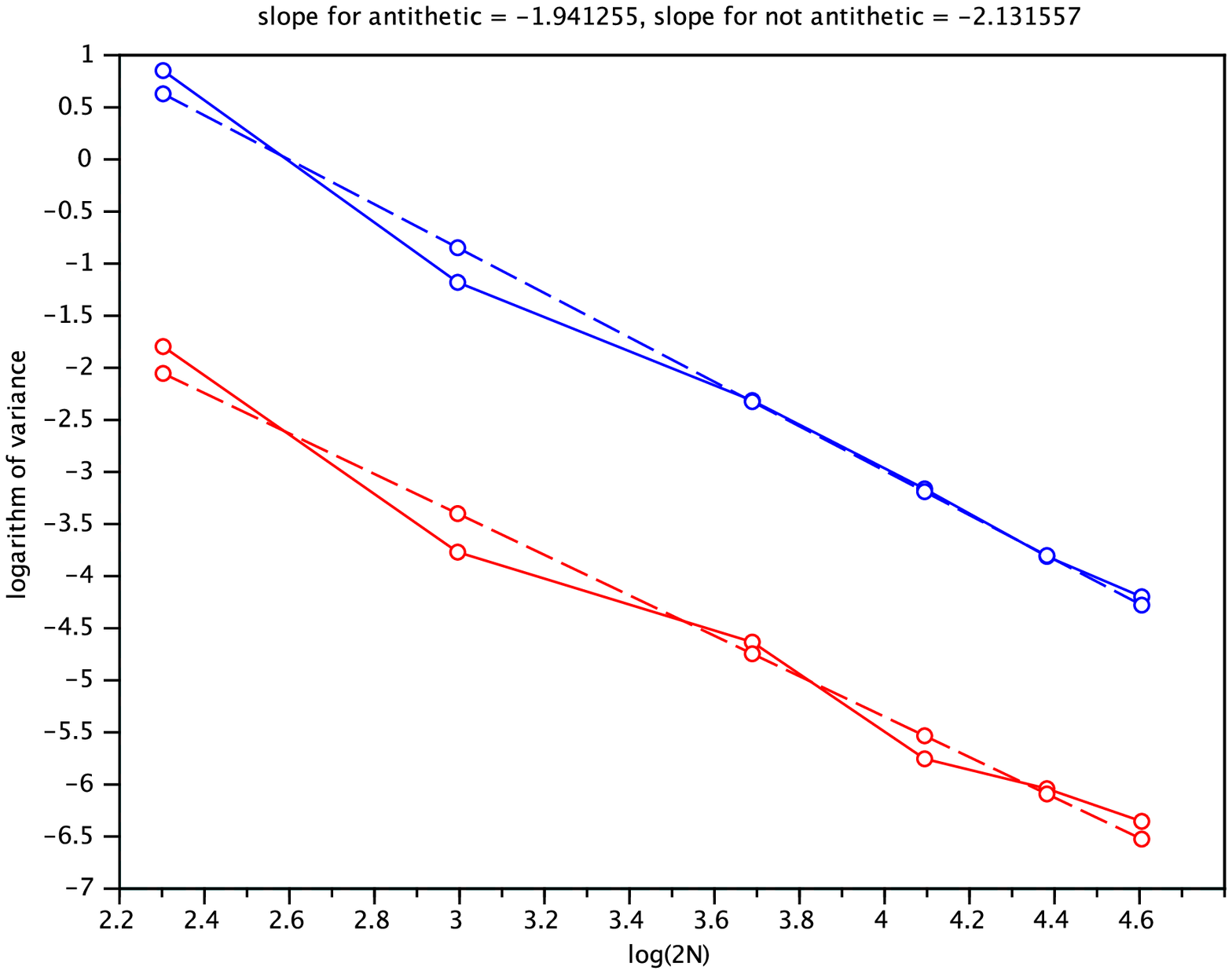}
\quad
\includegraphics[width=5.3cm]{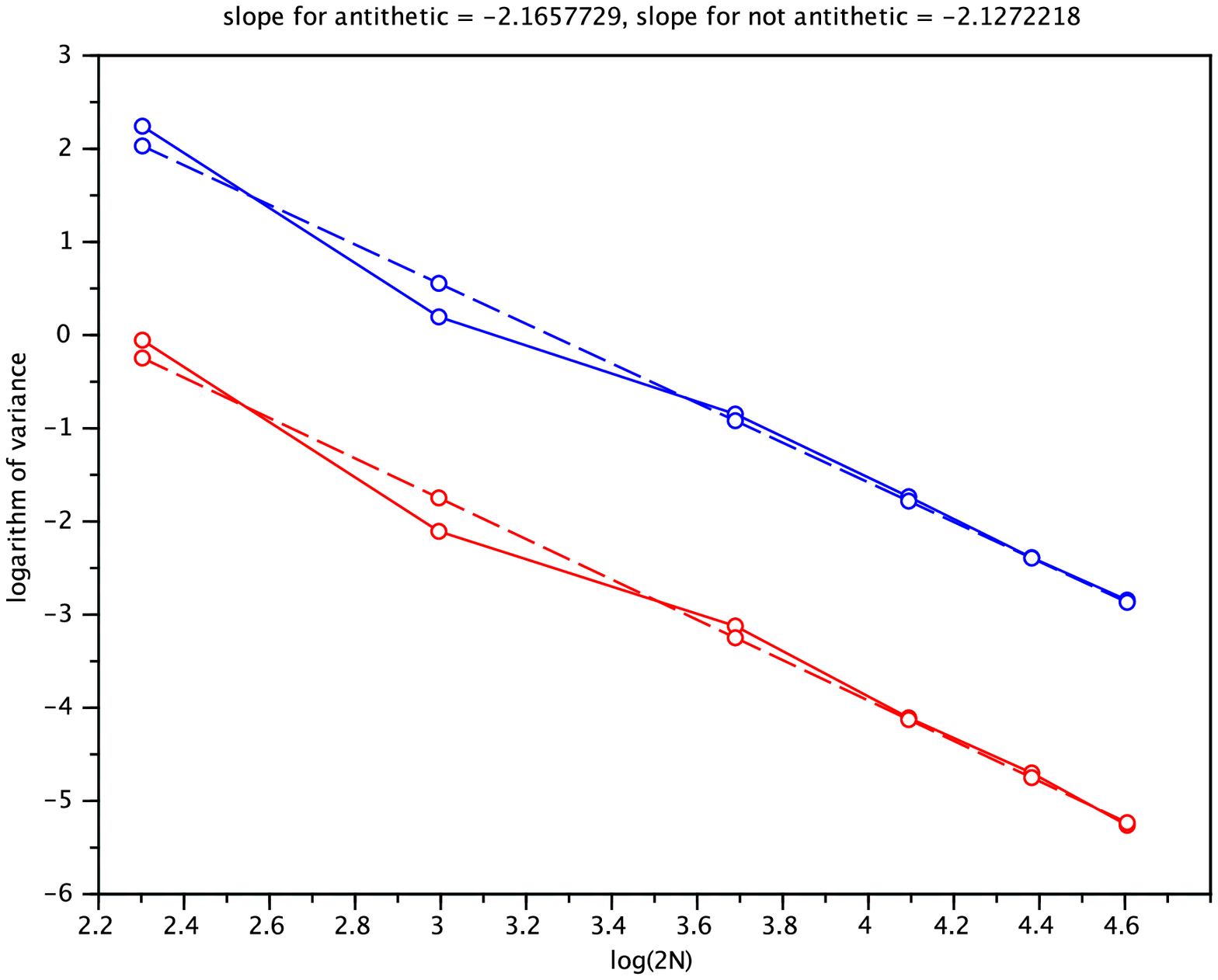}
\includegraphics[width=5.3cm]{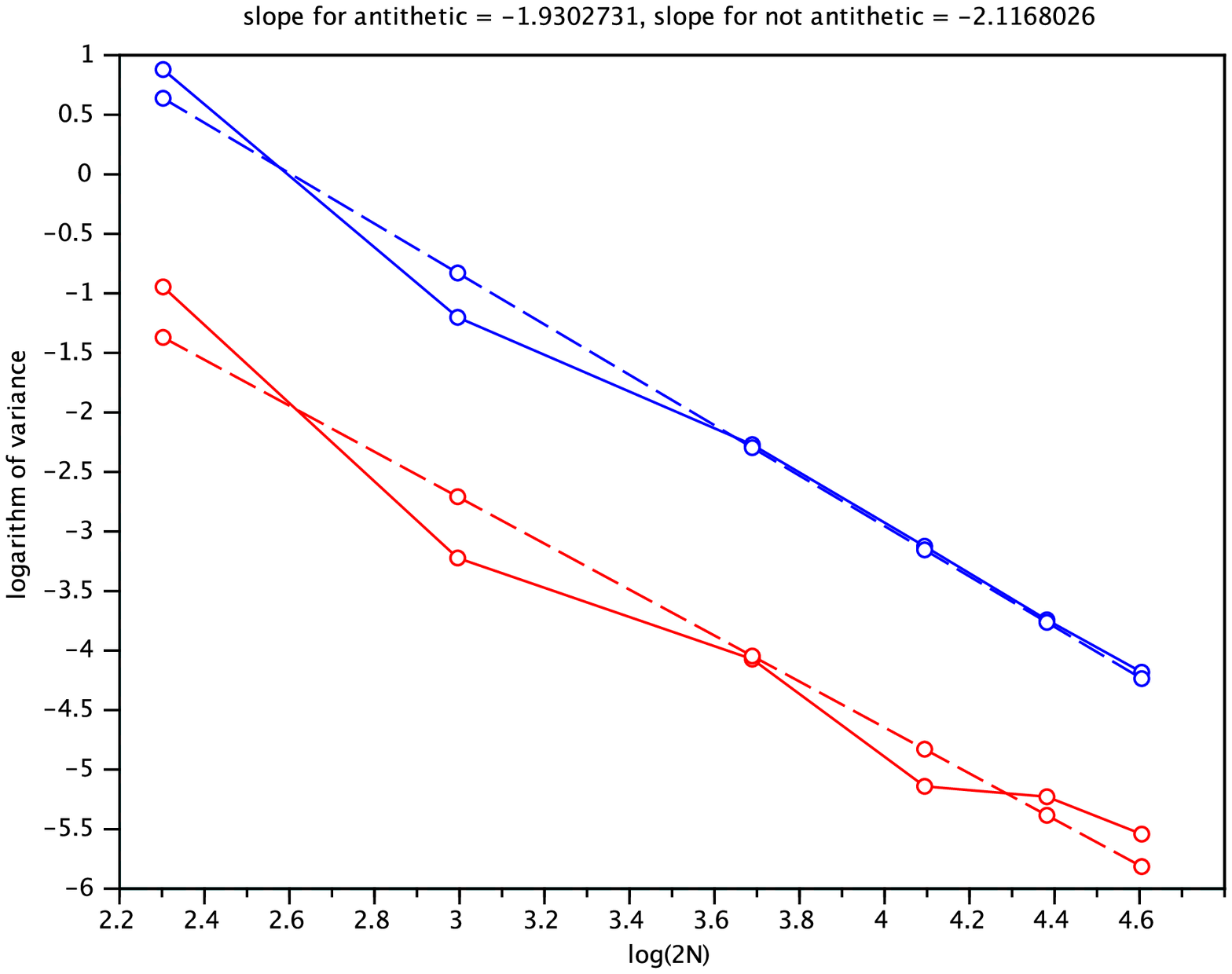}
\quad
\includegraphics[width=5.3cm]{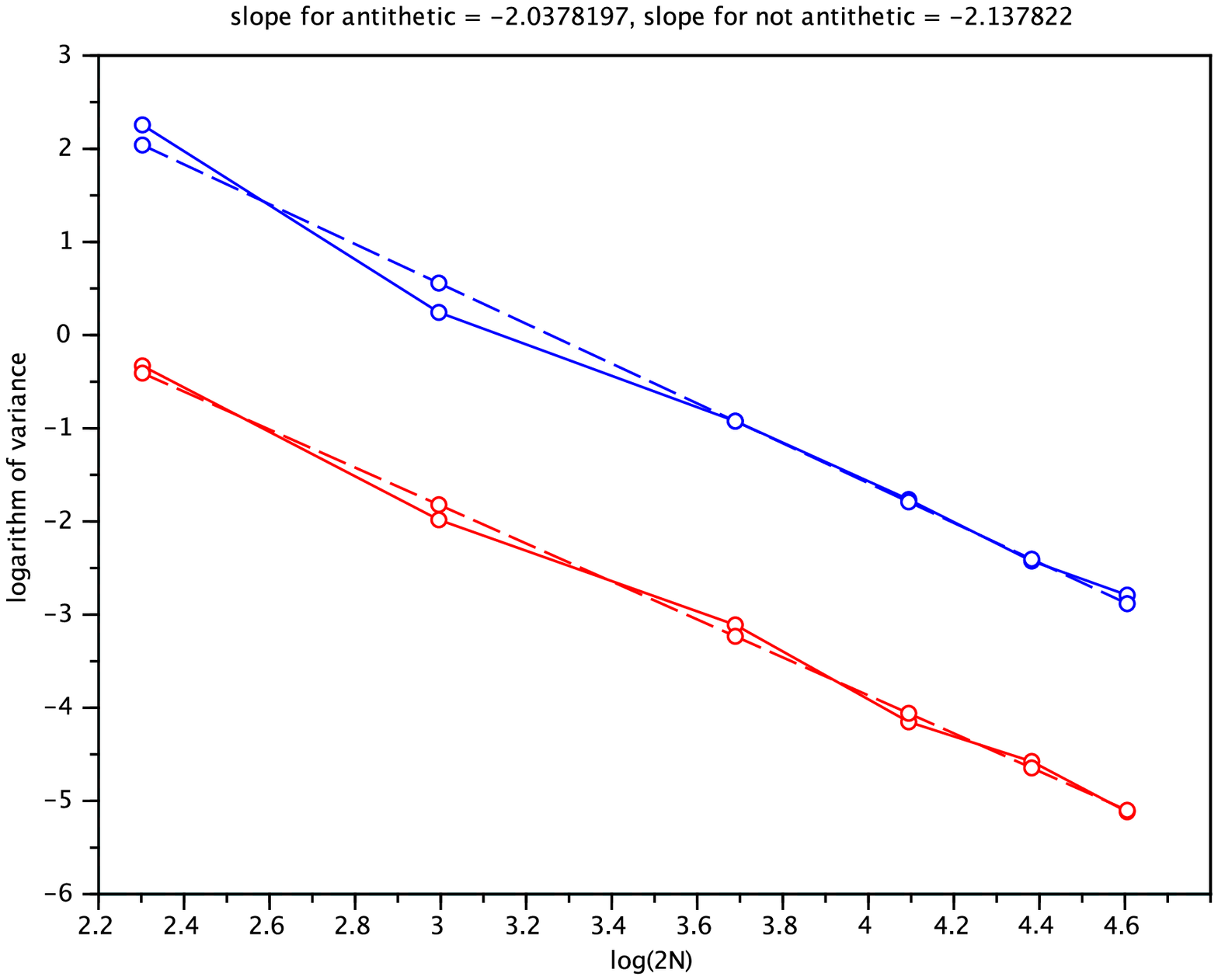}
\includegraphics[width=5.3cm]{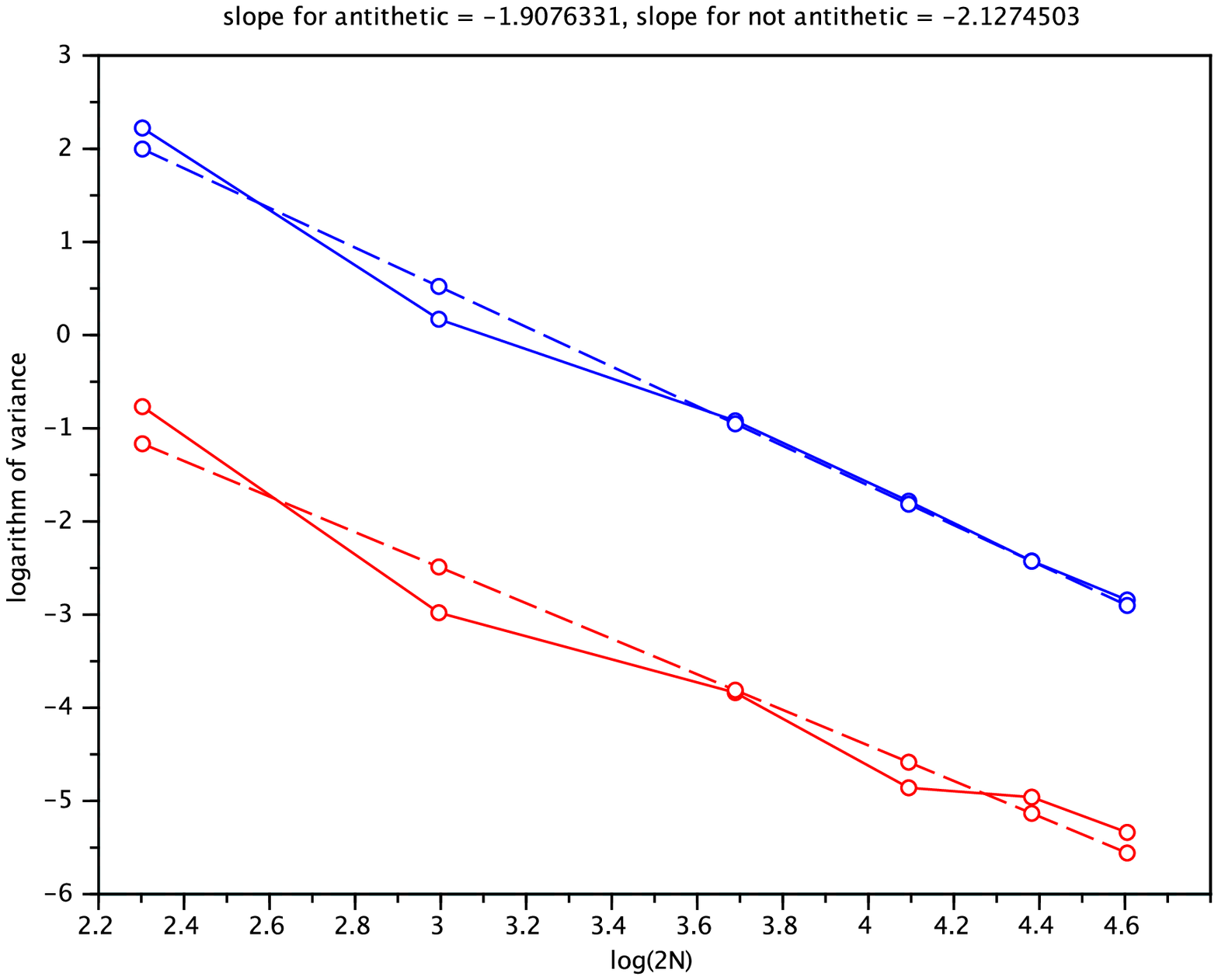}
\quad
\includegraphics[width=5.3cm]{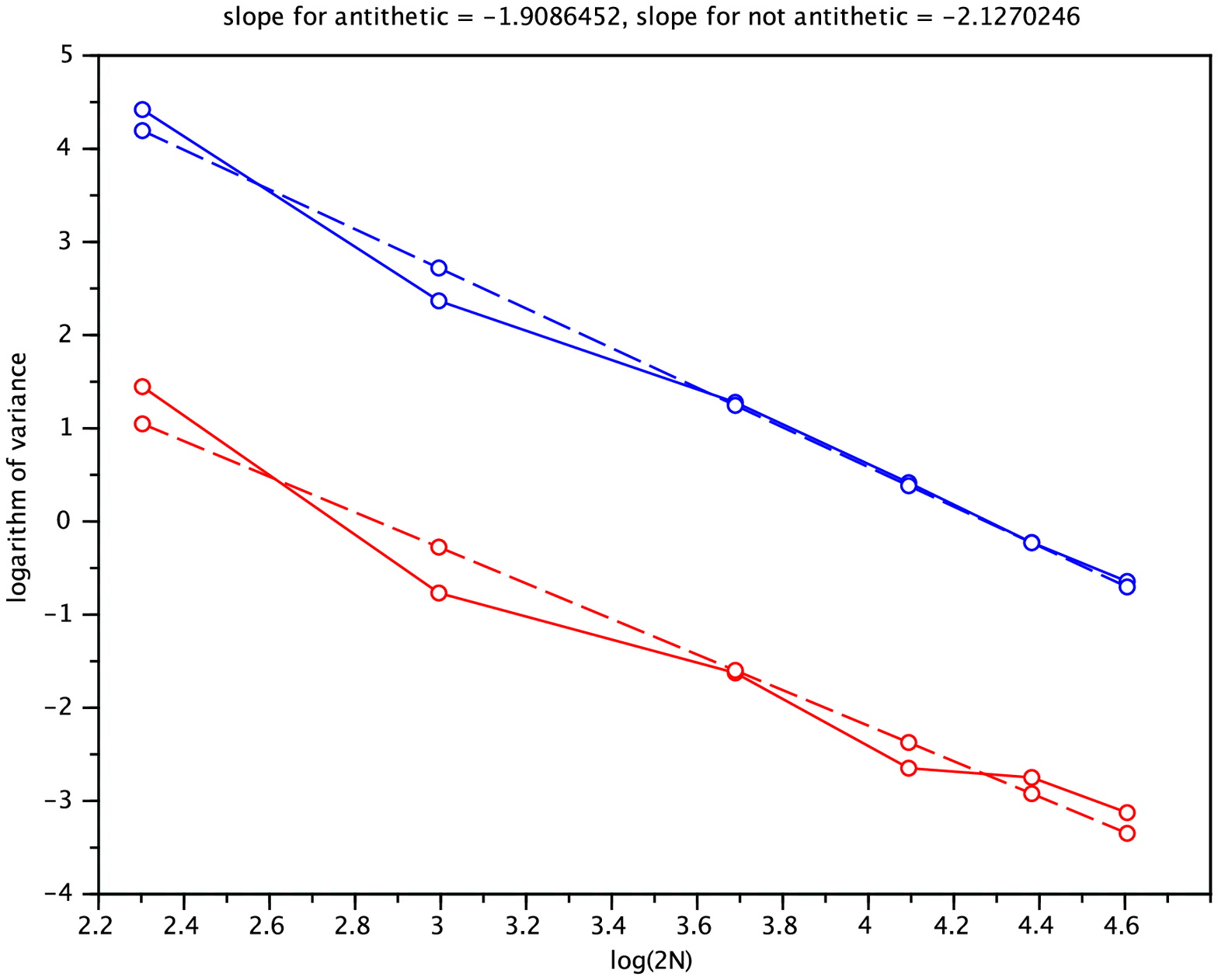}
\caption{Test-Case 2: Variances~\eqref{eq:def_variances} as a function
  of $N$ (Blue: Monte Carlo approach; Red: Antithetic Variable
  approach). The quantities of interest are the same as on
  Figure~\ref{per-homog-wrt-dof}.
\label{per-non-linear-log-var-wrt-dof}}
\end{center}
\end{figure}

On Table~\ref{tab:non-linear-var-red}, we report the variance reduction
ratios~\eqref{eq:def_R} (with the same convention as in
Table~\ref{tab:p-lap-var-red}). We observe an efficiency gain of more
than 10 for all quantities of interest, except again the cross
derivative $\partial_{\xi_1 \xi_2} W^\star_N$, for which the gain is
smaller, and of the order of 4. 

\begin{table}[!h]
\begin{center}
\begin{tabular}{|c|c|c|c|c|c|c|c|c|}
$2N$ & $W^\star_N$ & $\partial_{\xi_1} W^\star_N$ & $\partial_{\xi_2}
W^\star_N$ & $\partial_{\xi_1 \xi_1} W^\star_N$ & 
$\partial_{\xi_1 \xi_2} W^\star_N$ & $\partial_{\xi_2 \xi_2} W^\star_N$ &
$\xi \cdot \partial_\xi W^\star_N$ & $\xi^T \partial^2_\xi W^\star_N \xi$ \\
\hline
10 & 20.38 & 11.57 & 14.14 & 9.940 & 6.206 & 13.28 & 19.89 & 19.57 
\\
20 & 23.86 & 12.34 & 13.32 & 9.993 & 7.548 & 9.265 & 23.33 & 23.00 
\\
40 & 18.94 & 12.16 & 10.16 & 9.726 & 6.060 & 8.902 & 18.50 & 18.24 
\\
60 & 22.11 & 13.30 & 13.35 & 10.73 & 7.513 & 10.88 & 21.68 & 21.41 
\\
80 & 12.89 & 9.080 & 9.295 & 10.09 & 4.420 & 8.598 & 12.61 & 12.45
\\
100 & 12.37 & 10.17 & 8.635 & 11.21 & 3.896 & 10.24 & 12.12 & 11.96 
\\
\end{tabular}
\caption{Test-Case 2: Variance reduction ratios~\eqref{eq:def_R}.
\label{tab:non-linear-var-red}}
\end{center}
\end{table}

\subsection{Test Case 3}

We eventually turn to our final test-case, where both coefficients $a$
and $c$ do depend on the space variable.

We show on Figure~\ref{per-non-homog-log-var-wrt-dof} the
variances~\eqref{eq:def_variances} of our quantities of
interest. Again, we observe that they all decrease at the rate $1/|Q_N|$
as $N$ increases, and that the variance obtained with our approach is
systematically smaller than the Monte Carlo
variance.

\begin{figure}[h!]
\begin{center}
\includegraphics[width=5.3cm]{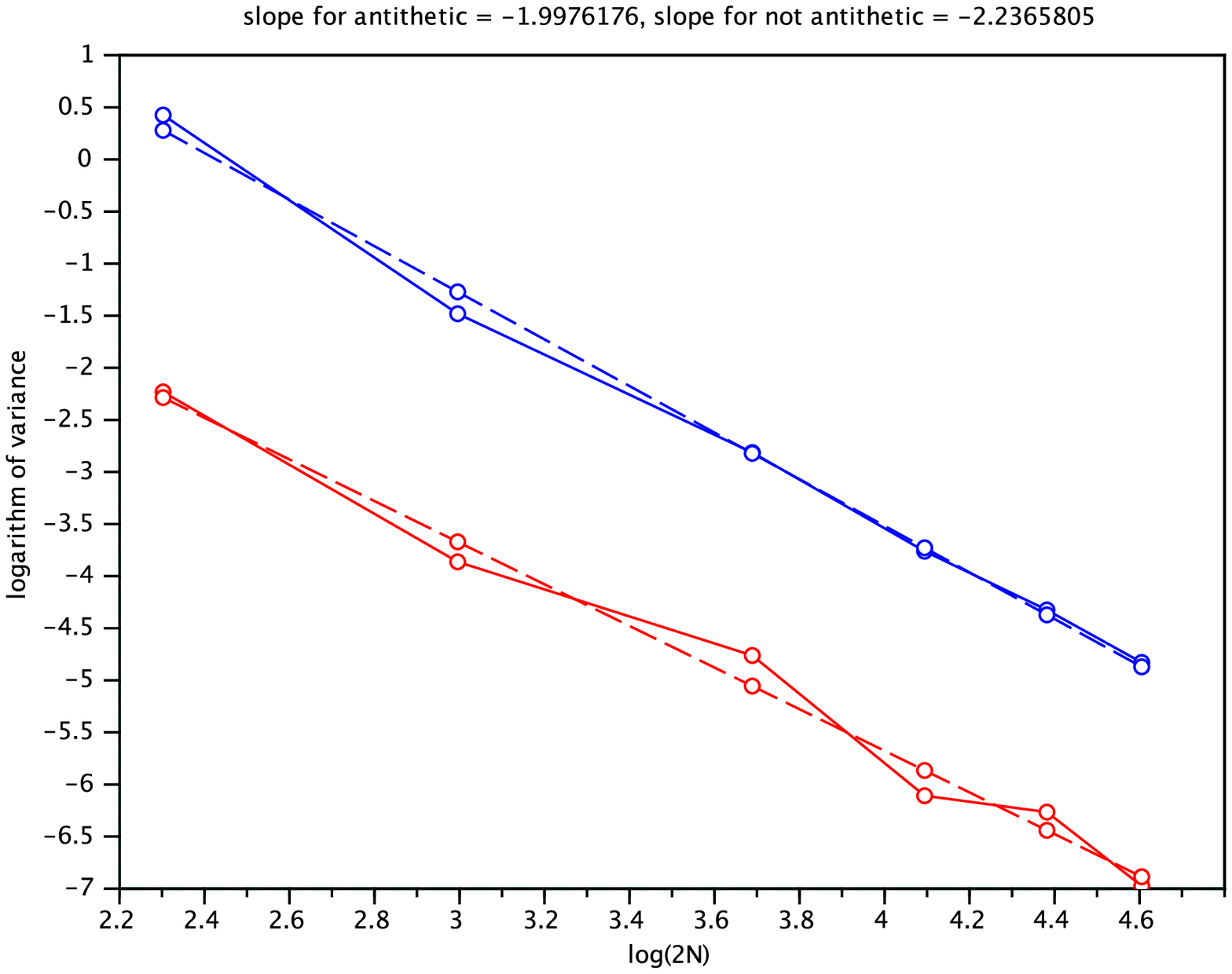}
\quad
\includegraphics[width=5.3cm]{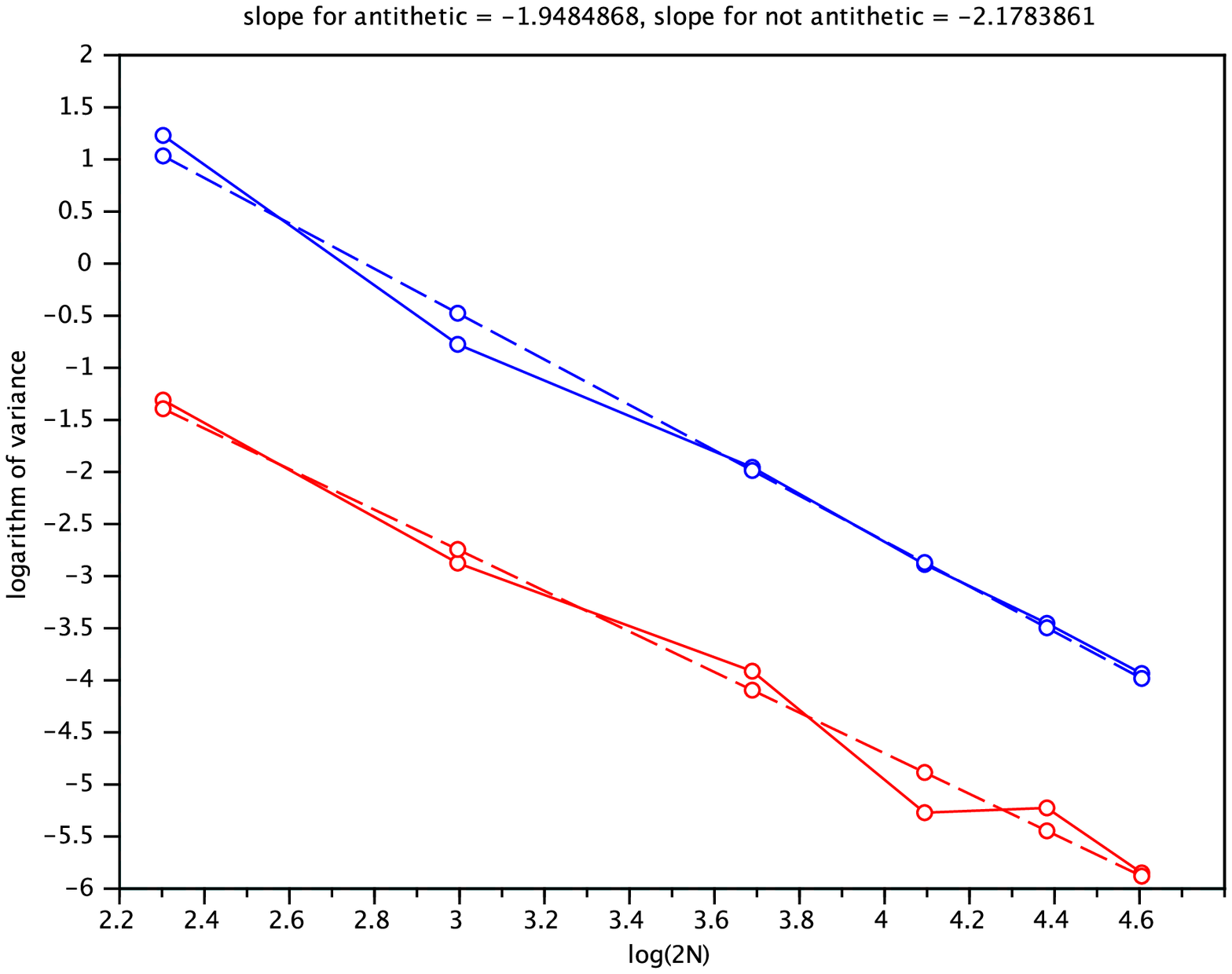}
\includegraphics[width=5.3cm]{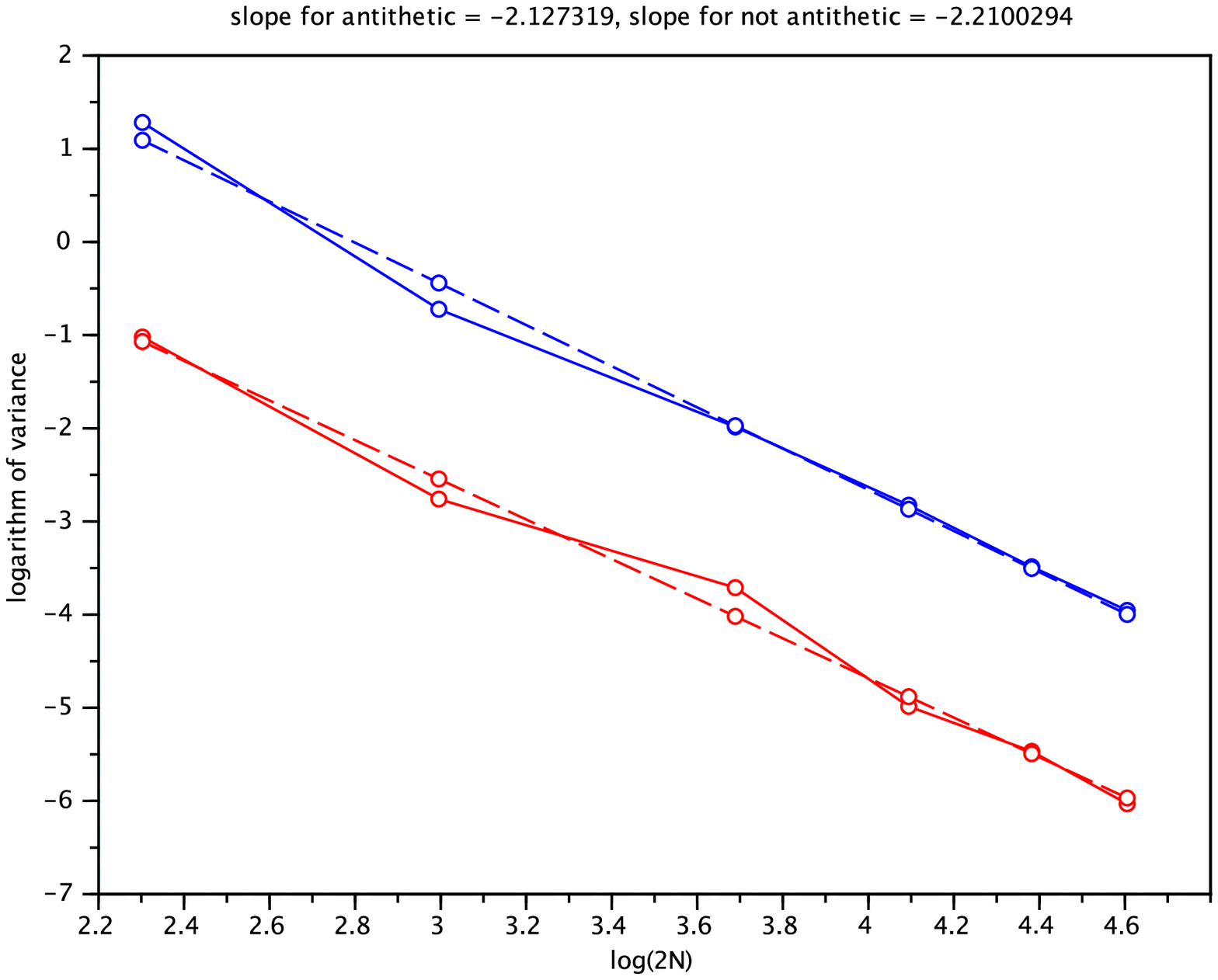}
\quad
\includegraphics[width=5.3cm]{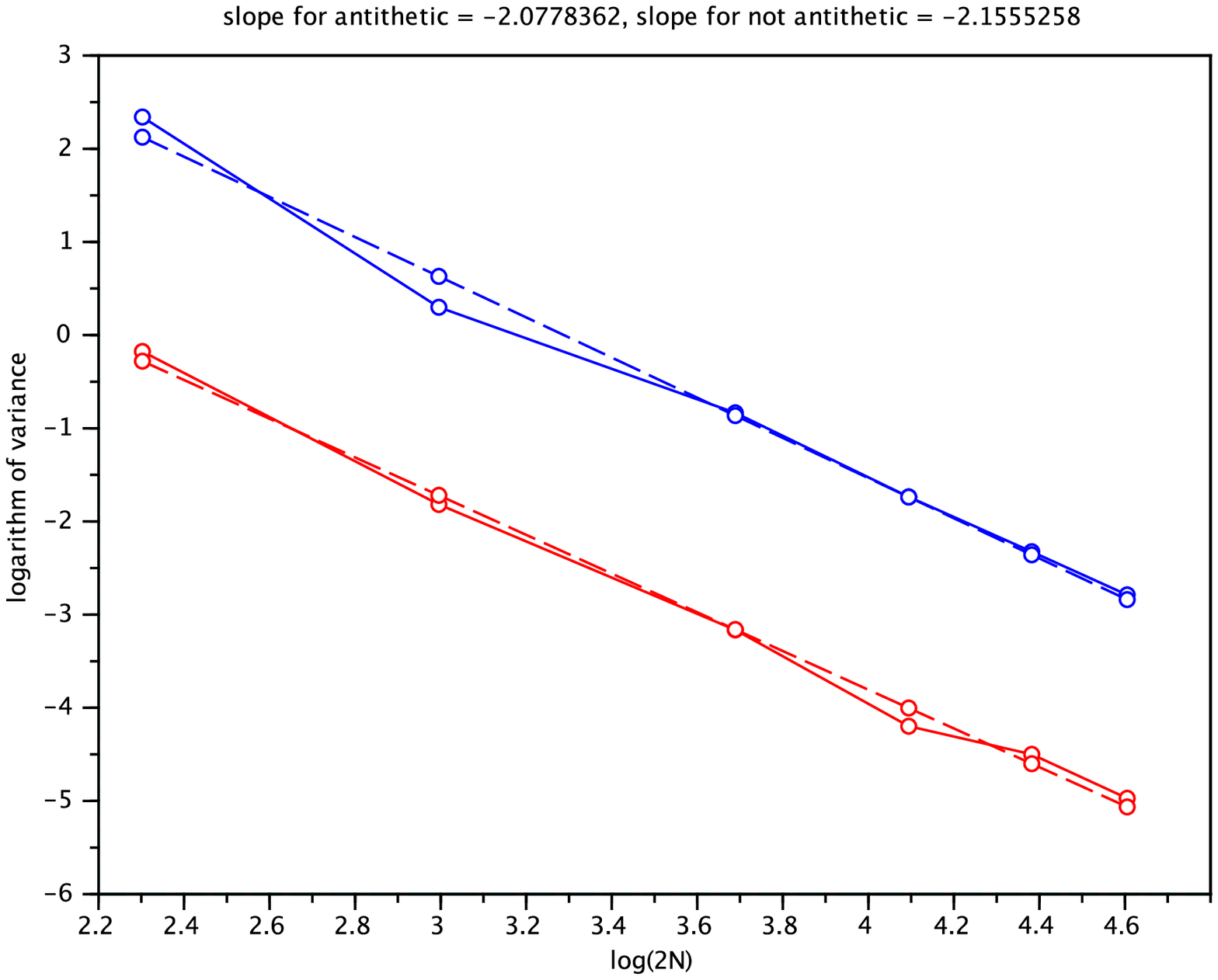}
\includegraphics[width=5.3cm]{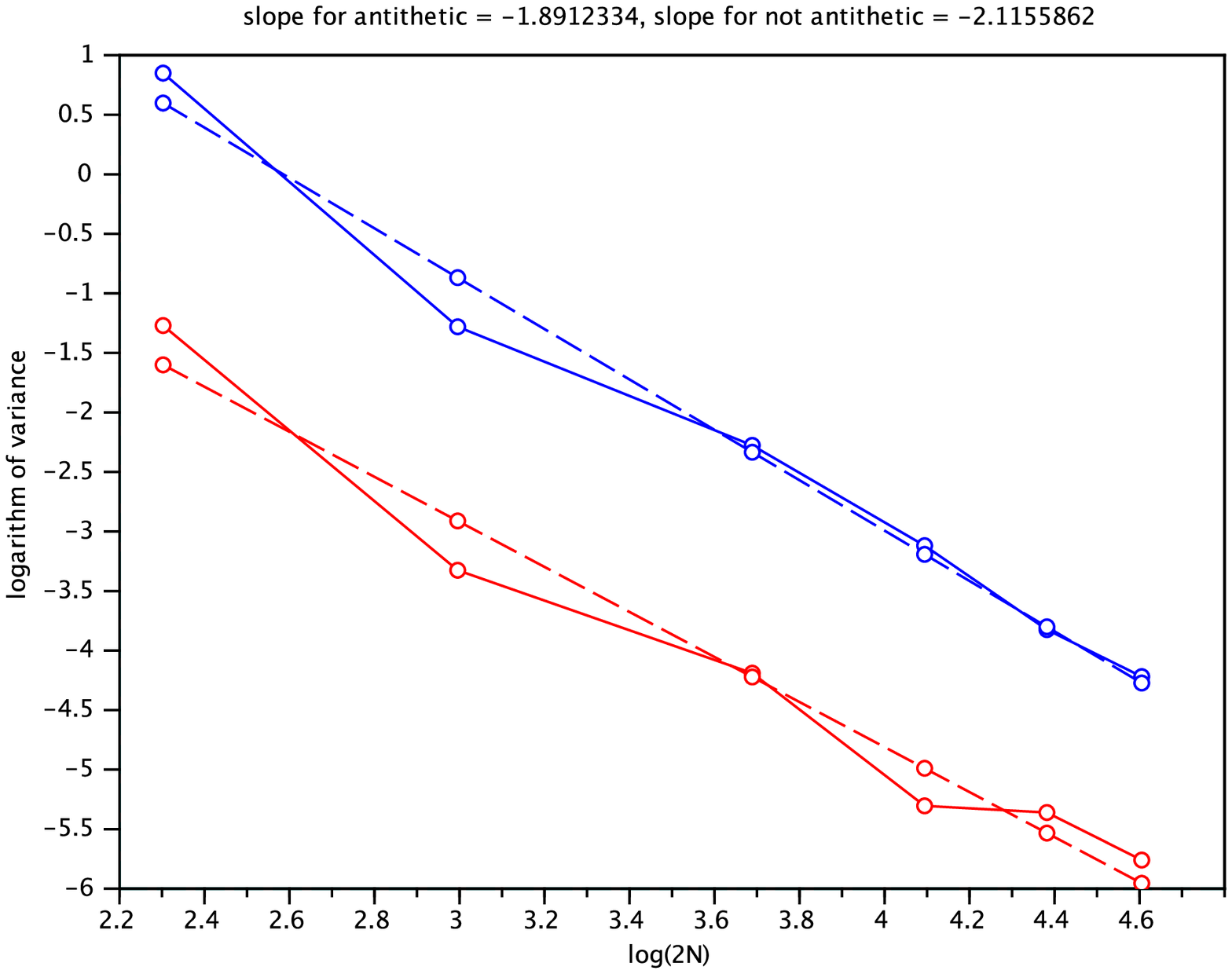}
\quad
\includegraphics[width=5.3cm]{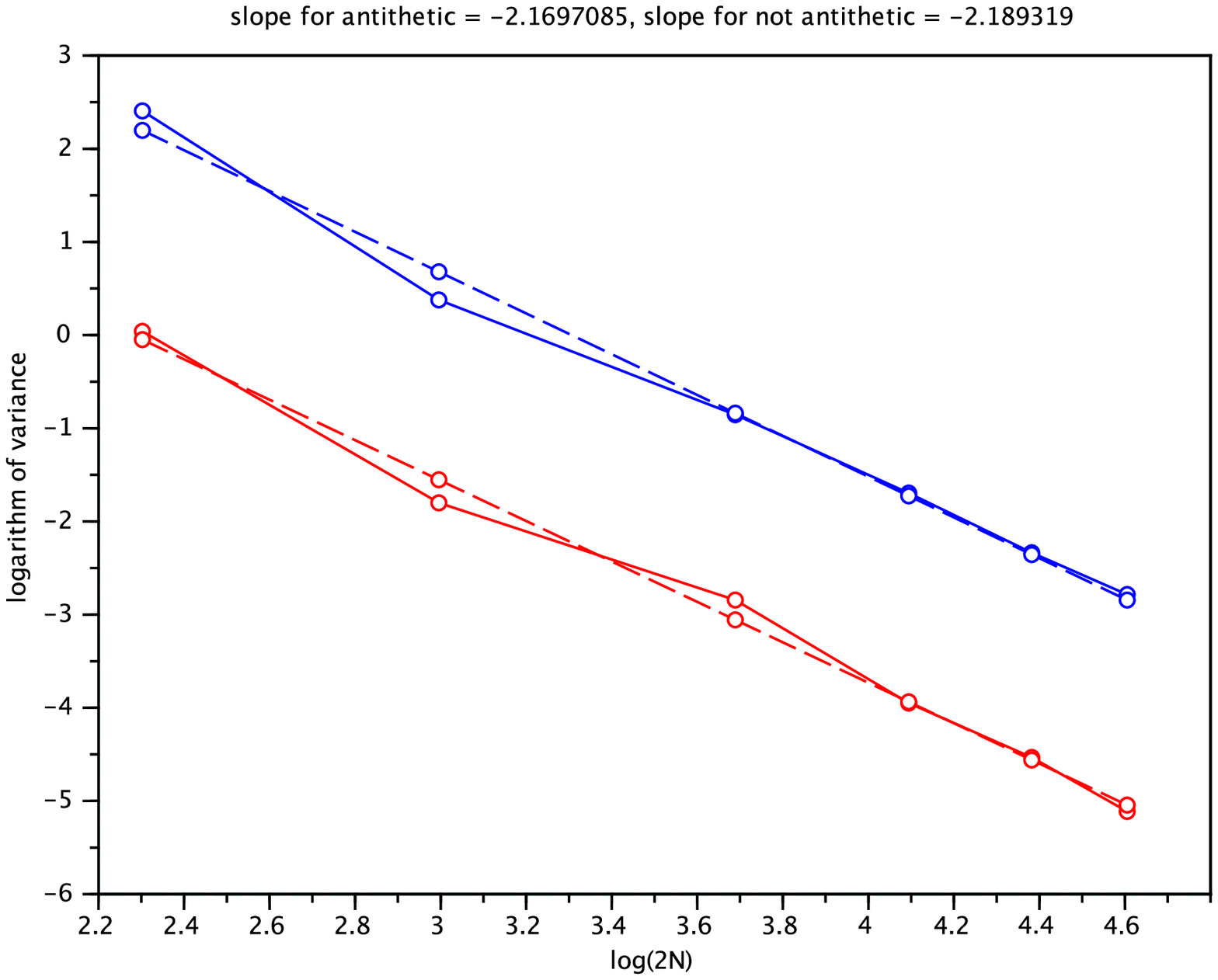}
\includegraphics[width=5.3cm]{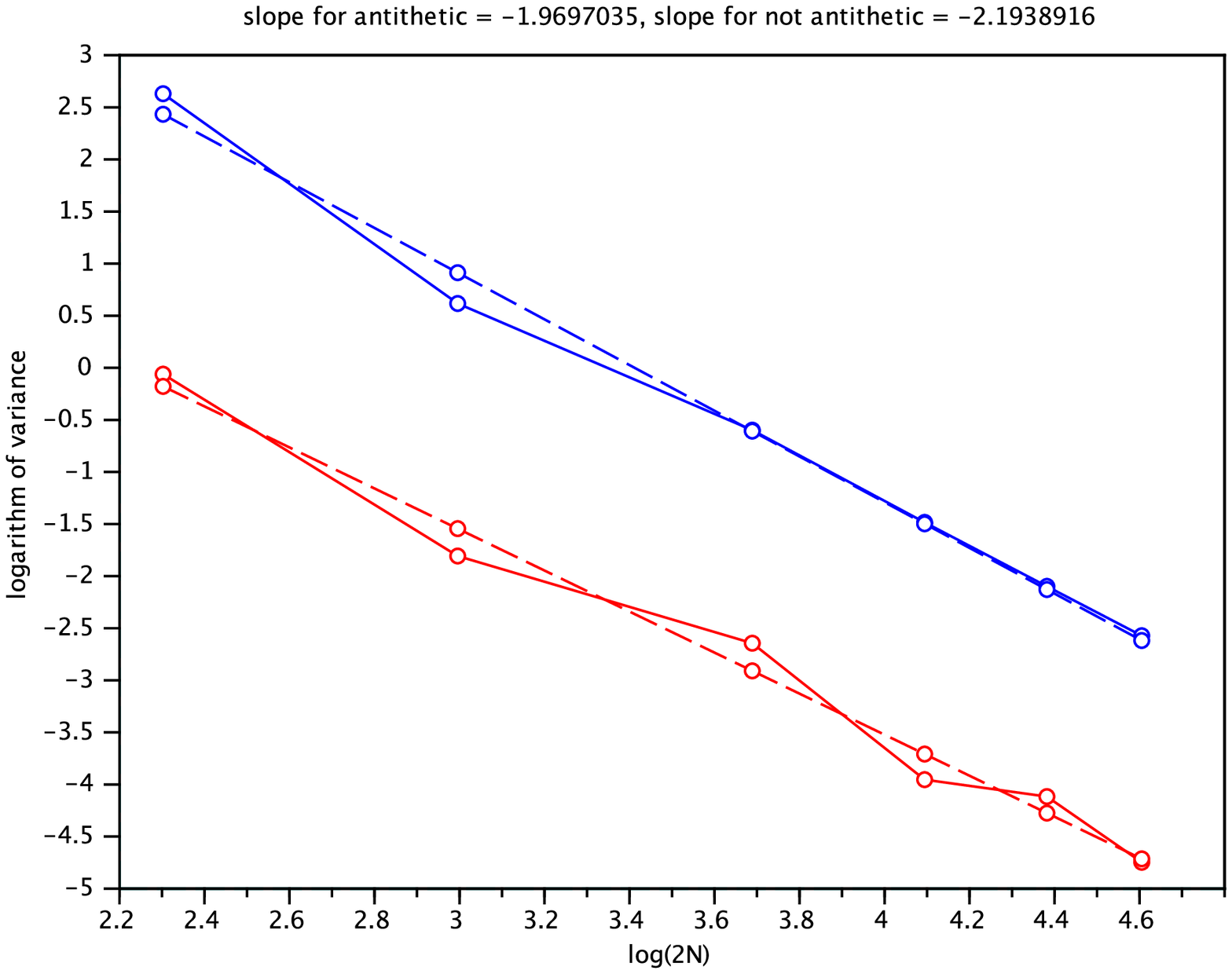}
\quad
\includegraphics[width=5.3cm]{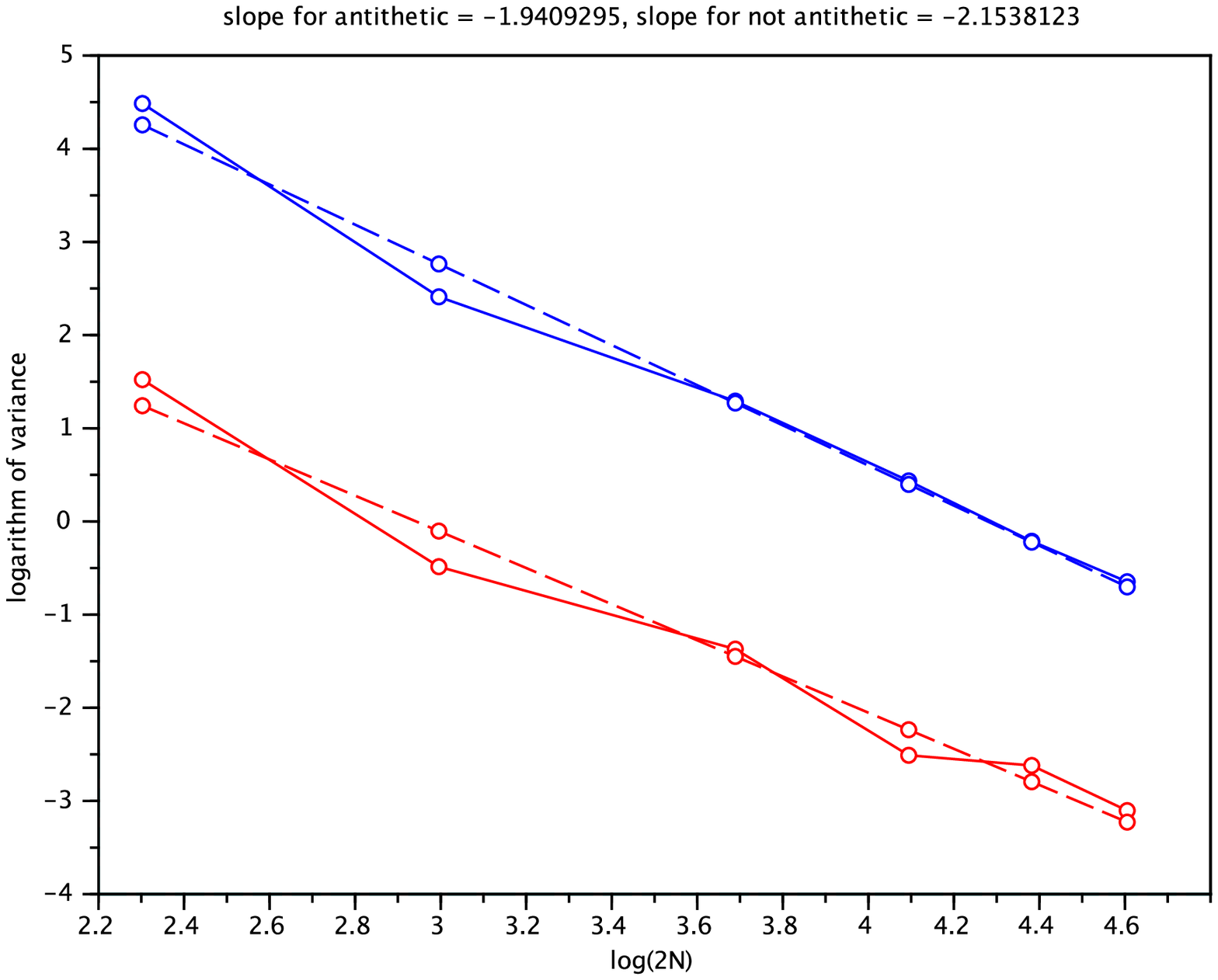}
\caption{Test-Case 3: Variances~\eqref{eq:def_variances} as a function of $N$ (Blue: Monte Carlo approach;
  Red: Antithetic Variable approach). The quantities of interest are
  the same as on Figure~\ref{per-homog-wrt-dof}.
\label{per-non-homog-log-var-wrt-dof}}
\end{center}
\end{figure}

On Table~\ref{tab:non-homog-var-red}, we report the variance reduction
ratios~\eqref{eq:def_R} (with the same convention as in
Table~\ref{tab:p-lap-var-red}). Results are quantitatively similar to
the ones obtained on Table~\ref{tab:non-linear-var-red}: we do observe a
robust variance reduction, even in cases for which theoretical support
is still currently missing. 

\begin{table}[!h]
\begin{center}
\begin{tabular}{|c|c|c|c|c|c|c|c|c|}
$2N$ & $W^\star_N$ & $\partial_{\xi_1} W^\star_N$ & $\partial_{\xi_2}
W^\star_N$ & $\partial_{\xi_1 \xi_1} W^\star_N$ & 
$\partial_{\xi_1 \xi_2} W^\star_N$ & $\partial_{\xi_2 \xi_2} W^\star_N$ &
$\xi \cdot \partial_\xi W^\star_N$ & $\xi^T \partial^2_\xi W^\star_N \xi$ \\
\hline
10 & 14.26 & 12.69 & 10.00 & 12.38 & 8.333 & 10.65 & 14.76 & 19.37
\\
20 & 10.82 & 8.166 & 7.669 & 8.304 & 7.730 & 8.827 & 11.29 & 18.11 
\\
40 & 7.014 & 7.077 & 5.613 & 10.28 & 6.776 & 7.310 & 7.731 & 14.32 
\\
60 & 10.45 & 10.84 & 8.666 & 11.72 & 8.896 & 9.524 & 11.82 & 19.01 
\\
80 & 6.961 & 5.880 & 7.250 & 8.800 & 4.646 & 8.996 & 7.522 & 11.10  
\\
100 & 8.543 & 6.780 & 7.970 & 8.873 & 4.669 & 10.26 & 8.798 & 11.66
\\
\end{tabular}
\caption{Test Case 3: Variance reduction ratios~\eqref{eq:def_R}.
\label{tab:non-homog-var-red}}
\end{center}
\end{table}
    
\section*{Acknowledgments} 

The work of FL and WM is partially supported by ONR under Grant 
N00014-09-1-0470. WM gratefully acknowledges the support from Labex MMCD
(Multi-Scale Modelling \& Experimentation of Materials for Sustainable
Construction) under contract ANR-11-LABX-0022. We also wish to thank
Claude Le Bris for enlightning discussions.


\begin{thebibliography}{99}

\bibitem{abdulle_vilmart}
A. Abdulle and G. Vilmart, 
{\em A priori error estimates for finite element methods with numerical
  quadrature for nonmonotone nonlinear elliptic problems}, 
Numer. Math., 121:397-431, 2012.

\bibitem{singapour} 
A. Anantharaman, R. Costaouec, C. Le Bris, F. Legoll and F. Thomines, 
{\em Introduction to numerical stochastic homogenization and the related
computational challenges: some recent developments}, W. Bao and Q. Du
eds., Lecture Notes Series, Institute for Mathematical Sciences,
National University of Singapore, vol. 22, 197-272 (2011).

\bibitem{barrett} J. W. Barrett and W. B. Liu, 
{\em Finite Element approximation of the p-Laplacian},
Maths. of Comp., 61(204):523--537, 1993.

\bibitem{blp} A. Bensoussan, J.-L. Lions and G. Papanicolaou, {\bf Asymptotic
 analysis for periodic structures}, Studies in Mathematics and its
 Applications, vol. 5. North-Holland Publishing Co., Amsterdam-New York,
 1978.

\bibitem{banff} X. Blanc, R. Costaouec, C. Le Bris and F. Legoll, 
{\em Variance reduction in stochastic homogenization: the
technique of antithetic variables}, in Numerical Analysis and Multiscale
Computations, B. Engquist, O. Runborg and R. Tsai eds., Lect. Notes
Comput. Sci. Eng., vol. 82, Springer, 47-70 (2012).

\bibitem{mprf} X. Blanc, R. Costaouec, C. Le Bris and F. Legoll,
{\em Variance reduction in stochastic homogenization using antithetic
    variables}, Markov Processes and Related Fields, 18(1):31-66, 2012
(preliminary version available at
{\tt http://cermics.enpc.fr/$\sim$legoll/hdr/FL24.pdf}).


\bibitem{bourgeat} A. Bourgeat and A. Piatnitski,
{\em Approximation of effective coefficients in stochastic homogenization},
Ann I. H. Poincar\'e - PR, 40(2):153--165, 2004.

\bibitem{chow} S.-S. Chow, {\em Finite Element error estimates for
    nonlinear elliptic equations of monotone type}, Numer. Math.,
  54:373--393, 1989.

\bibitem{cd} D. Cioranescu and P. Donato, {\bf An introduction to
   homogenization}, Oxford Lecture Series in Mathematics and its
 Applications, vol. 17. Oxford University Press, New York, 1999.


\bibitem{cedya} R. Costaouec, C. Le Bris and F. Legoll,
{\em Variance reduction in stochastic homogenization: proof of
 concept, using antithetic variables}, 
Boletin Soc. Esp. Mat. Apl., 50:9--27, 2010. 

\bibitem{DalMaso-Modica} G. Dal Maso and L. Modica, {\em Nonlinear
    stochastic homogenization}, Annali di matematica pura ed applicata,
  144(4):347-389, 1986.

\bibitem{DalMaso-Modica2} G. Dal Maso and L. Modica, {\em Nonlinear
    stochastic homogenization and ergodic-theory}, J. Reine Angewandte
  Mathematik, 368:28-42, 1986.

\bibitem{engquist-souganidis} B. Engquist and P. E. Souganidis,
{\em Asymptotic and numerical homogenization}, Acta Numerica,
 17:147--190, 2008.


\bibitem{neukamm} A. Gloria and S. Neukamm, {\em
Commutability of homogenization and linearization at identity
in finite elasticity and applications}, Ann. I. H. Poincaré - AN,
28:941--964, 2011.

\bibitem{gloria-otto} A. Gloria and F. Otto, {\em An optimal error
 estimate in stochastic homogenization of discrete elliptic equations}, 
Ann. Appl. Probab., in press.  

\bibitem{glowinski} R. Glowinski and A. Marrocco, {\em Sur
    l'approximation par éléments finis d'ordre un, et la résolution, par
    pénalisation-dualité, d'une classe de problèmes de Dirichlet non
    linéaires}, RAIRO Anal. Numér., 2:41--76, 1975.

\bibitem{jikov} V. V. Jikov, S. M. Kozlov and O. A. Oleinik, {\bf
   Homogenization of differential operators and integral functionals},
   Springer-Verlag, 1994.

\bibitem{krengel} U. Krengel, {\bf Ergodic theorems}, de Gruyter
 Studies in Mathematics, vol. 6, de Gruyter, 1985.

\bibitem{enumath} C. Le Bris, {\em Some numerical approaches for
    ``weakly'' random homogenization}, in Numerical mathematics and advanced
   applications, Proceedings of ENUMATH 2009, G. Kreiss, P. L\"otstedt,
   A. Malqvist and M. Neytcheva eds., Lect. Notes
 Comput. Sci. Eng., Springer, 29-45 (2010).

\bibitem{letallec} P. Le Tallec, {\em Numerical methods for nonlinear
    three-dimensional elasticity}, in Handbook of numerical analysis,
  vol. III, P. Ciarlet and J.-L. Lions eds., North Holland, Amsterdam,
  465-624 (1994). 

\bibitem{liu} J. S. Liu, {\bf Monte-Carlo strategies in scientific
   computing}, Springer Series in Statistics, 2001.


%
 

\bibitem{shiryaev} A. N. Shiryaev, {\bf Probability},
Graduate Texts in Mathematics, vol. 95, Springer, 1984.

\bibitem{composites} L. Tartar,
{\em Estimations of homogenized coefficients}, in 
Topics in the mathematical modelling of composite materials,
A. Cherkaev and R. Kohn eds., Progress in nonlinear differential
equations and their applications, vol. 31, Birkh\"auser, 1987.

\bibitem{tempelman} A. A. Tempel'man, {\em Ergodic theorems for general
    dynamical systems}, Trudy Moskov. Mat. Obsc., 26:94--132, 1972.

\bibitem{thomee} V. Thom\'ee, {\bf Galerkin Finite Element Methods for
    Parabolic Problems}, Springer Series in Computational Mathematics,
  vol. 25, 2006.

\end{thebibliography}
\end{document}